\begin{document}

\newtheorem{theorem}{Theorem}[section]
\newtheorem{lemma}[theorem]{Lemma}
\newtheorem{corollary}[theorem]{Corollary}
\newtheorem{conjecture}[theorem]{Conjecture}
\newtheorem{cor}[theorem]{Corollary}
\newtheorem{proposition}[theorem]{Proposition}
\theoremstyle{definition}
\newtheorem{definition}[theorem]{Definition}
\newtheorem{example}[theorem]{Example}
\newtheorem{claim}[theorem]{Claim}
\newtheorem{remark}[theorem]{Remark}

\newenvironment{pfofthm}[1]
{\par\vskip2\parsep\noindent{\sc Proof of\ #1. }}{{\hfill
$\Box$}
\par\vskip2\parsep}
\newenvironment{pfoflem}[1]
{\par\vskip2\parsep\noindent{\sc Proof of Lemma\ #1. }}{{\hfill
$\Box$}
\par\vskip2\parsep}

%%%%%%%%%%%%%%%%%%%%%%%%%%%%%%%%%%%%%%%%%%
%%% General macros
%%%%%%%%%%%%%%%%%%%%%%%%%%%%%%%%%%%%%%%%%%

\newcommand{\R}{\mathbb{R}}
\newcommand{\T}{\mathcal{T}}
\newcommand{\C}{\mathcal{C}}
\newcommand{\Z}{\mathbb{Z}}
\newcommand{\Q}{\mathbb{Q}}
\newcommand{\E}{\mathbb E}
\newcommand{\N}{\mathbb N}

\newcommand{\barray}{\begin{eqnarray*}}
\newcommand{\earray}{\end{eqnarray*}}

\newcommand{\beq}{\begin{equation}}
\newcommand{\eeq}{\end{equation}}

\def\ep{\varepsilon}

%%%%%%%%%%%%%%%%%%%%%%%%%%%%%%%%%%%%%%%%%%
%%% Probability Macros
%%%%%%%%%%%%%%%%%%%%%%%%%%%%%%%%%%%%%%%%%%

\renewcommand{\Pr}{\mathbb{P}}
\newcommand{\Prob}{\Pr}
\newcommand{\Var}{\operatorname{Var}}
\newcommand{\Cov}{\operatorname{Cov}}
\newcommand{\Exp}{\mathbb{E}}
\newcommand{\expect}{\mathbb{E}}
\newcommand{\1}{\mathbf{1}}
\newcommand{\prob}{\Pr}
\newcommand{\pr}{\Pr}
\newcommand{\filt}{\mathscr{F}}
\DeclareDocumentCommand \one { o }
{%
\IfNoValueTF {#1}
{\mathbf{1}  }
{\mathbf{1}\left\{ {#1} \right\} }%
}
\newcommand{\Bernoulli}{\operatorname{Bernoulli}}
\newcommand{\Binomial}{\operatorname{Binom}}
\newcommand{\Binom}{\Binomial}
\newcommand{\Poisson}{\operatorname{Poisson}}
\newcommand{\Exponential}{\operatorname{Exp}}
\newcommand{\Unif}{\operatorname{Unif}}

\newcommand{\cadlag}{\text{c\`adl\`ag}}
\newcommand{\weakto}{\ensuremath{\Rightarrow}}

%%%%%%%%%%%%%%%%%%%%%%%%%%%%%%%%%%%%%%%%%%
%%% Paper-Specific Macros
%%%%%%%%%%%%%%%%%%%%%%%%%%%%%%%%%%%%%%%%%%

\DeclareDocumentCommand \D { o }
{%
\IfNoValueTF {#1}
{D}
{\partial_{{#1}}}
}%
\newcommand{\sgn}{\operatorname{sgn}}

\newcommand{\dsp}[2]
{%
\tfrac{n + #1}{n+#2}x
}
\newcommand{\PDEOP}{\mathscr{S}^{\Psi}}
\newcommand{\PDEOPtilde}{{\widetilde{\mathscr{S}}}^{\Psi}}
\newcommand{\DSpace}{\mathcal{D}}
\newcommand{\Dspace}{\DSpace}
\newcommand{\Lloc}{L^1_{\text{loc}}}

\newcommand{\DTSpace}{\mathcal{X}}
\newcommand{\FPSpace}{\mathfrak{F}}
\newcommand{\Dto}{ \overset{\DTSpace}{\to} }
\newcommand{\aec}{\text{a.e.c.}}

% example: \DTfn{G}[t][\hat]
\DeclareDocumentCommand \DTfn { m o O{}}
{
\IfNoValueTF {#2}
{#3 {\boldsymbol #1}}
{#3 {#1}_{#2}}
}

\DeclareDocumentCommand \A { o }{\DTfn{A}[#1]}
\DeclareDocumentCommand \Atilde { o }{\DTfn{A}[#1][\widetilde]}
\DeclareDocumentCommand \F { o }{\DTfn{F}[#1]}
\DeclareDocumentCommand \Fhat { o }{\DTfn{F}[#1][\widehat]}
\DeclareDocumentCommand \Ftilde { o }{\DTfn{F}[#1][\widetilde]}
\DeclareDocumentCommand \G { o }{\DTfn{G}[#1]}
\DeclareDocumentCommand \Ghat { o }{\DTfn{G}[#1][\widehat]}
\DeclareDocumentCommand \Gtilde { o }{\DTfn{G}[#1][\widetilde]}
\DeclareDocumentCommand \M { o }{\DTfn{M}[#1]}
\DeclareDocumentCommand \Mtilde { o }{\DTfn{M}[#1][\widetilde]}

\DeclareDocumentCommand \diF { O{n} }{D_{#1}}
\DeclareDocumentCommand \diFtilde { O{n} }{\widetilde D_{#1}}

%
%\DeclareDocumentCommand \A { o }
%{
%\IfNoValueTF {#1}
%{\boldsymbol{A}}
%{\boldsymbol A_{#1}}
%}
%
%\DeclareDocumentCommand \Atilde { o }
%{
%\IfNoValueTF {#1}
%{\widetilde{\boldsymbol{A}}}
%{\widetilde{\boldsymbol A}_{#1}}
%}
%
%\DeclareDocumentCommand \F { o }
%{
%\IfNoValueTF {#1}
%{\boldsymbol{F}}
%{\boldsymbol F_{#1}}
%}
%
%\DeclareDocumentCommand \G { o }
%{
%\IfNoValueTF {#1}
%{\boldsymbol{G}}
%{\boldsymbol G_{#1}}
%}
%

\DeclareDocumentCommand \candy { O{\cdot} }
{%
\| {#1}
\|_{x^{-2}}
}%
\newcommand{\dcandy}{d_{x^{-2}}}
\newcommand{\dblah}{d_{\Lloc}}

\title{Choices and Intervals}
%\title{Convergence of the empirical distribution in random interval splitting}
\author{Pascal Maillard}
\address{Department of Mathematics, Weizmann Institute of Science}
\email{pascal.maillard@weizmann.ac.il}
\author{Elliot Paquette}
\address{Department of Mathematics, Weizmann Institute of Science}
\email{elliot.paquette@gmail.com}
\thanks{
PM is supported by a grant from the Israel Science Foundation.
EP is supported by NSF Postdoctoral Fellowship DMS-1304057.
}
\date{\today}

\begin{abstract}
%The genesis of the problem studied in this article goes approximately as follows: Imagine Itai Benjamini walking into room packed with four students (one of them is very buff) and randomly asks comparatively naive questions.
We consider a random interval splitting process, in which the splitting rule depends on the empirical distribution of interval lengths. We show that this empirical distribution converges to a limit almost surely as the number of intervals goes to infinity. We give a characterization of this limit as a solution of an ODE and use this to derive precise tail estimates. The convergence is established by showing that the size-biased empirical distribution evolves in the limit according to a certain deterministic evolution equation. Although this equation involves a non-local, non-linear operator, it can be studied thanks to a carefully chosen norm with respect to which this operator is contractive.

In finite-dimensional settings, convergence results like this usually go under the name of \emph{stochastic approximation} and can be approached by a general method of Kushner and Clark. An important technical contribution of this article is the extension of this method to an infinite-dimensional setting.
\end{abstract}

\maketitle

\section{Introduction}

Consider the following stochastic process on the unit circle.  At its initiation, finitely many distinct points are placed on the circle in any arbitrary configuration.  This configuration of points subdivides the circle into a finite number of intervals.  At each time step, two points are sampled uniformly from the circle.  Each of these points lands within some pair of intervals formed by the previous configuration.  Add the point that falls in the larger interval to the existing configuration of points, and discard the other.  If there is a tie, break it by flipping a fair coin, and continue adding points to the circle ad infinitum.  We call this process the \emph{max-$2$ process}.  If instead of keeping the points that fall in the larger intervals, we keep the points that fall in the smaller intervals, we call this process the \emph{min-$2$ process}.  If we simply choose between the two points uniformly at random, then we recover standard i.i.d. sampling of points from the circle, which we call the \emph{uniform process}.

Heuristically, the effect of having the two choices in the max-$2$ process should be to more evenly distribute the points around the circle than the uniform process.  In effect, the points repulse each other, as short intervals will be subdivided less frequently and large intervals will be subdivided more frequently.  In the min-$2$ process, on the other hand, points should have some tendency to clump together, so as to cause abnormally dense regions on the circle.  Nevertheless, we conjecture that in all cases, the limiting distribution of points is uniform on the circle (see paragraph ``Open problems'' below).

\subsection*{Main result}

In this article, we focus on the evolution of the law of a typical interval length. We first formalize the dynamics of the process. Let \(
I_1^{(n)},
I_2^{(n)},
\ldots,
I_{n+n_0}^{(n)}
\)
denote the lengths of the intervals after $n$ steps of the process (started with $n_0$ intervals). Define the size-biased empirical distribution function
\[
 \diFtilde[n](x) = \sum_{i=1}^{n+n_0} I_i^{(n)} \one[I_i^{(n)}\le x].
\]
%Let $\ell_n$ denote the length of the interval to be subdivided, after sampling it according to the min-$2$ rule, the uniform rule, or the max-$2$ rule respectively.
This function is now defined to evolve according to Markovian dynamics as follows.
Given $\diFtilde[n]$, at the $(n+1)$-st step we choose an interval at random, with length $\ell_n = \diFtilde[n]^{-1}(u),$ where $u$ is sampled from a law on $(0,1]$ whose distribution function we denote by $\Psi$. This randomly chosen interval is now subdivided into two pieces at a point chosen uniformly inside the interval. This produces a new sequence of interval lengths
\(
I_1^{(n+1)},
I_2^{(n+1)},
\ldots,
I_{n+n_0+1}^{(n+1)}
\)
and the process is repeated. We call the resulting process the $\Psi$-process. Note that the max-$2$, uniform and min-$2$ processes are $\Psi$-processes with $\Psi(u) = u^2$, $u$ and $1-(1-u)^2$, respectively.

For $n\ge0$, denote by $\mu_n$ the empirical measure of the rescaled interval lengths $(n+n_0)I_1^{(n)},\ldots,(n+n_0)I_{n+n_0}^{(n)}$, i.e.\, the probability measure of sampling one of these lengths uniformly at random.  In symbols,
\[
\mu_n = \frac 1 {n+n_0} \sum_{i=1}^{n+n_0} \delta_{(n+n_0) I_i^{(n)}}.
\]
Set $\diF[n](x) = \diFtilde[n](x/(n+n_0))$ for $n\ge0$, $x\ge0$, so that $\diF[n](x) = \int_0^{x} y\,\mu_n(dy)$.
Our main theorem is the following:
\begin{theorem}
\label{thm:main0}
Assume that $\Psi$ is continuous and satisfies $1-\Psi(u) \ge c(1-u)^{\kappa_\Psi}$ for some $c>0$ and $\kappa_\Psi\in[1,\infty)$, for all $u\in(0,1)$. Then there is an absolutely continuous probability measure $\mu^\Psi$ on $(0,\infty)$ with mean 1, independent of the initial configuration, such that $\diF[n]$ converges pointwise to the function $F^{\Psi}(x) = \int_0^x y\,\mu^\Psi(dy)$, almost surely as $n\to\infty$. Furthermore, $\mu_n$ (weakly) converges to $\mu^\Psi$, almost surely as $n\to\infty$. The function $F^\Psi$ is the same as in Lemma~\ref{lem:unique_F*}.
\end{theorem}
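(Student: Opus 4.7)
The approach is to cast the problem as a stochastic approximation in an infinite-dimensional state space. First I would derive the one-step Doob decomposition of $n\mapsto \diF[n]$. Given the current state, we sample $U_n\sim\Psi$, pick the interval of un-rescaled length $\ell_n = \diFtilde[n]^{-1}(U_n)$, split it uniformly, then rescale by the new factor $n+n_0+1$. A short computation should give a decomposition of the form
\[
\diF[n+1](x) - \diF[n](x) = \frac{1}{n+n_0+1}\bigl[\PDEOP(\diF[n])\bigr](x) + \Delta M_{n+1}(x),
\]
where $\PDEOP$ is a specific nonlocal, nonlinear operator built from $\Psi$ (matching the one that characterizes $F^\Psi$ in Lemma~\ref{lem:unique_F*}), and $\Delta M_{n+1}$ is a mean-zero increment with respect to the natural filtration $\filt_n$. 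Under the time change $t=\log(n+n_0)$, the deterministic part is the Euler scheme for the ODE $\partial_t F = \PDEOP(F)$.

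The deterministic ODE I would analyse by a contraction argument, using the structure provided by Lemma~\ref{lem:unique_F*}: namely that $F^\Psi$ is the unique fixed point of $\PDEOP$, and that $\PDEOP$ is strictly contractive with respect to the weighted norm $\candy$. This gives exponential convergence of the deterministic flow toward $F^\Psi$, uniformly over bounded initial data, and in particular explains why the limit is independent of the starting configuration.

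On the stochastic side, I would estimate the cumulative martingale $\sum_k \Delta M_k$ in the same weighted norm. Each $\Delta M_{n+1}$ encodes the replacement of one interval by two, so, after rescaling, is supported on a set of measure $O(1/n)$ with total variation $O(1/n)$. Computing $\Exp\bigl[\|\Delta M_{n+1}\|_{x^{-2}}^2 \mid \filt_n\bigr]$ should give a bound of order $n^{-2}$ (times a mild state-dependent factor), and a Doob-type maximal inequality then controls $\sup_{m\ge n}\|\sum_{k=n}^m \Delta M_k\|_{x^{-2}}$, showing it goes to zero almost surely. Combining the exponential contractivity of the ODE with the vanishing martingale, in the spirit of Kushner and Clark, yields $\|\diF[n] - F^\Psi\|_{x^{-2}} \to 0$ almost surely. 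Monotonicity of $\diF[n]$ together with the continuity of $F^\Psi$ then upgrades this to pointwise convergence at every $x>0$, whence the weak convergence of $\mu_n$.

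The main obstacle is the adaptation of the Kushner--Clark framework to this infinite-dimensional setting, and within that the choice of norm. The weight $x^{-2}$ must simultaneously be weak enough that $\PDEOP$ remains contractive in it, and strong enough to dampen the martingale fluctuations, which tend to concentrate near $x=0$ because repeated splitting continually creates small intervals. The hypothesis $1-\Psi(u)\ge c(1-u)^{\kappa_\Psi}$ is almost certainly what prevents the process from generating too many short intervals too quickly: it uniformly bounds the mass of $\mu_n$ near zero, which is precisely what makes the $x^{-2}$-weighted estimates closable.
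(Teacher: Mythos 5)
Your overall architecture matches the paper's: a Kushner--Clark style decomposition into a deterministic flow governed by $\PDEOP$ plus a martingale perturbation, with the deterministic flow shown to contract in the norm $\candy$. The paper uses a continuous-time Poisson embedding rather than a discrete-time Doob decomposition, but that is a presentational convenience, not a substantive difference.

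The genuine gap is in your treatment of the noise. You propose to show $\bigl\|\sum_{k\ge n}\Delta M_k\bigr\|_{x^{-2}}\to 0$ via a second-moment bound and a Doob-type maximal inequality, and then close the loop entirely in the $\candy$ norm. This is exactly the step the authors flag as unavailable. The norm $\candy[f]=\int_0^\infty x^{-2}|f(x)|\,dx$ is of $L^1$ type, with an absolute value inside the integral; it is not induced by an inner product, and weighted $L^1$ is not a type-2 Banach space, so there is no orthogonality identity $\Exp\bigl[\candy[\sum\Delta M_k]^2\bigr]\approx\sum\Exp\bigl[\candy[\Delta M_k]^2\bigr]$ and no Burkholder-type inequality to exploit. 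Without cancellation, the best one can do is $\candy\bigl[\sum_{k=n}^N\Delta M_k\bigr]\le\sum_{k=n}^N\candy[\Delta M_k]$, and each increment has $\candy$-norm of order $1/k$ (indeed $\candy[B(s,u,v,\cdot)]=1$ exactly, as the paper notes in the proof of Lemma~\ref{lem:candy_bias}), so this sum diverges logarithmically. The paper circumvents this by controlling $\M$ in a different, Hilbertian norm $\int_0^\infty x^{-3}\M[t](x)^2\,dx$ (Lemma~\ref{lem:qv}, Lemma~\ref{lem:M_decay}), which only yields smallness in $\Lloc$ via Cauchy--Schwarz (Lemma~\ref{lem:CS_noise}) --- not in $\candy$. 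Since the noise cannot be made small in $\candy$, one cannot run a direct fixed-point contraction for the stochastic evolution; instead the paper establishes precompactness of the shifted evolutions $\A^{(n)}$ in the $\Lloc$ topology (via tightness from the entropy bounds of Section~\ref{sec:entropy} and the asymptotic equicontinuity of Lemma~\ref{lem:AEC}), uses continuity of $\PDEOP$ on that topology (Lemma~\ref{lem:PDE_continuity}) to show every limit point lies in $\FPSpace$, and only then invokes the $\candy$-contraction to identify the common long-time limit $F^\Psi$. Your proposal omits the tightness/entropy and equicontinuity machinery entirely, and these are exactly what replace the failed direct martingale bound.

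A secondary error: you interpret the hypothesis $1-\Psi(u)\ge c(1-u)^{\kappa_\Psi}$ as bounding the mass of $\mu_n$ near zero so as to make the $x^{-2}$-weighted estimates closable. In fact it is a lower bound on the probability of sampling $u$ near $1$, i.e.\ a lower bound on how frequently \emph{large} intervals are split. It is used in Section~\ref{sec:largest_interval} to show the largest interval shrinks at a definite exponential rate and in Section~\ref{sec:entropy} to produce a negative drift on the entropy when it is large; both give tightness of the right tail of $\{\A[t]\}$. The mass near zero is controlled automatically by the size-bias structure ($\candy[\A[t]]\le 1$ by construction), not by this hypothesis.
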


A remark on the assumptions in Theorem~\ref{thm:main0}: we believe that continuity of $\Psi$ is not necessary for the theorem to hold. It is probably possible to extend our proof to cover the cases of discontinuous $\Psi$, at the expense of greater technicalities. However, we have not worked out the details. As for the second assumption, we first remark that a necessary condition for the theorem to hold is $\Psi(x) < 1$ for all $x < 1$. Under this condition, however, the entropy bounds obtained in Section~\ref{sec:entropy} would fail to hold, so some more restrictive estimates are fundamental for the current proof to work. The theorem might still be true with only the above condition, although the almost sure convergence might have to be replaced by convergence in probability.

Theorem~\ref{thm:main0} implies in particular that the max-$2$ process and the min-$2$ process have empirical interval distributions that converge, regardless the starting configuration, to a limit after rescaling (see Figure~\ref{fig:1}). This theorem also covers the analogous max-$k$ processes and min-$k$ processes for natural numbers $k,$ defined by first choosing $k$ points and then selecting the point in the largest or smallest interval respectively.  These are $\Psi$-processes with $\Psi(u) = u^k$ and $\Psi(u) = 1 - (1-u)^k$ respectively.

\begin{figure}[t]
\label{fig:1}
\begin{center}
\begin{tikzpicture}
\begin{axis}[
xlabel = {Length},
ylabel = {Density},
ymin = 0, ymax = 3,
xmin = 0, xmax = 4,
width = 10cm,
height = 7cm,
legend entries={min-$5$, min-$2$, uniform, max-$2$, max-$10$}
]
\addplot[red,thick] table {data/min-5-1G-00-intervals-4-xmax-1024-bins.dat};
\addplot[orange,thick] table {data/min-2-1G-15-intervals-4-xmax-1024-bins.dat};
\addplot[green,thick] table {data/unif-1G-00-intervals-4-xmax-1024-bins.dat};
\addplot[blue,thick] table {data/max-2-1G-15-intervals-4-xmax-1024-bins.dat};
\addplot[violet,thick] table {data/max-10-1G-00-intervals-4-xmax-1024-bins.dat};
\end{axis}
\end{tikzpicture}
\caption{Empirical density of interval lengths in simulation of max-10, max-2, uniform, min-2 and min-5 processes with $10^9$~points. For the plot, the x-axis has been discretized into 1024 equally sized bins.}
\end{center}
\end{figure}
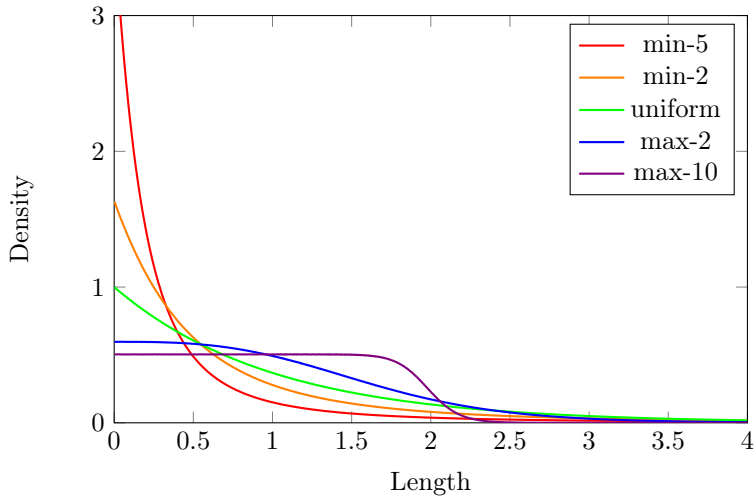

We also study properties of the limiting distribution $F=F^\Psi$. It is shown to be characterized by the following integro-differential equation
\[
F'(x) = x\int_x^\infty \frac 1 z\,d\Psi(F(z)),
\]
which allows us to derive tail estimates for many choices of $\Psi.$ Note that $f^{\Psi}(x) = F'(x)/x$ is the density of the (non-size-biased) empirical distribution. For the max-$k$ process, it is shown that $f^\Psi(x) \sim C_ke^{-kx}$ as $x\to\infty$ for some (implicit) $C_k$, while for the min-$k$ process the tail satisfies $f^\Psi(x)\sim (c_k/(k-1))x^{-2-1/(k-1)}$ for some explicit $c_k$ which satisfies $c_k\to 1$ as $k\to\infty$.  See Propositions~\ref{prop:F_tail_max} and \ref{prop:F_tail_min} for more precise statements. For comparison, in the uniform process, it is a classical theorem of~\cite{Weiss1955} that the limiting interval distribution is the exponential distribution of parameter $1$. Theorem~\ref{thm:main0} gives a new, complete proof of this fact. Many other precise results exist for the uniform splitting model, see for example \cite{Devroye1981,Devroye1982a,Deheuvels1982}.

Additionally, this theorem should be compared to results for the \emph{Kakutani interval splitting procedure} (see Lootgieter~\cite{Lootgieter1977}, van~Zwet~\cite{Zwet1978} and Slud~\cite{Slud1978} for results and further background on this process; note the correction~\cite{Slud1982} to the latter paper).  In its simplest form, this can be described by always taking $\ell_n$ to be the largest interval and then subdividing this interval by a uniformly chosen point.  Alternatively, it can be defined by letting $\Psi(u) = \one[u \geq 1]$ in the above definition (this case is not covered by Theorem~\ref{thm:main0}, but the proof could be adapted).  By a theorem of Pyke~\cite{Pyke1980}, the interval distribution of the Kakutani procedure converges to a $\Unif[0,2]$ variable.  Indeed, we can see that the max-$k$ process for large $k$ resembles the Kakutani process more and more, and in fact $F^{u^k}$ converges as $k \to \infty$ to the function $F^U(x)=x^2/4\wedge 1$, which is the size-biased distribution function of a $\Unif[0,2]$ variable (see Proposition~\ref{prop:limit_kakutani}).

\subsection*{Methodology}

We begin by embedding the discrete-time process $\diF[n](x)$ into a continuous time process $\A[t](x)$ in such a way that $n\approx e^t$. This continuous time process $\A[t]$ essentially evolves according to a stochastic evolution equation
\begin{equation}
\label{eq:SPDE}
\D[t] \A[t](x) = -x\D[x]\A[t](x) + x^2\int_x^\infty\frac{1}{y} d\Psi(\A[t](y)) + \DTfn{M}[t][\dot](x)
\end{equation}
for some centered noise $\M[t](x).$ This equation is both nonlinear and nonlocal, and thus it requires very specialized analysis.  First off, we transform the problem to studying an integrated form of the evolution, given by
\[
\A[t](x) = \A[0](e^{-t}x) + \int_0^t (e^{s-t}x)^2 \left[\int_{e^{s-t}x}^\infty \frac{1}{z} d\Psi(\A[s](z))\right]\,ds + \M[t](x).
\]
This allows to us to write
\(
\A = \PDEOP(\A) + \M,
\)
with $\PDEOP$ an operator acting on time-indexed distributions (here and throughout, we use boldface letters to denote function-valued processes indexed by time). Fixed points of $\PDEOP$ solve the following deterministic evolution equation:
\begin{equation}
\label{eq:deterministic_evo}
\F[t](x) = \F[0](e^{-t}x) + \int_0^t (e^{s-t}x)^2 \left[\int_{e^{s-t}x}^\infty \frac{1}{z} d\Psi(\F[s](z))\right]\,ds.
\end{equation}

Second, we show that \eqref{eq:deterministic_evo} has strong ergodicity properties. The key to this is the following carefully selected norm,
\[
\candy[f] = \int_0^\infty x^{-2}|f(x)|\,dx,
\]
with respect to which the evolution operator associated to \eqref{eq:deterministic_evo} quite surprisingly turns out to be a contraction (see Proposition~\ref{prop:candy}). This assures that there is a unique distribution $F^\Psi$ so that for any starting distribution, the large-time limit of the evolution is $F^\Psi$ (Lemma~\ref{lem:unique_F*}).

Third, we show how for any $\Psi$ satisfying the hypotheses of Theorem~\ref{thm:main0}, we can control the entropy of the size-biased empirical interval distribution. The aim of bounding the entropy is to establish tightness of the  family of distribution functions $\{\A[t]\}_{t\ge0}$.
One ingredient for this is an estimate for the size of the largest interval, which is shown to be smaller than $n^{-\alpha}$ for large $n$, for every $\alpha < (\kappa_\Psi+1)^{-1}$ (and under more restrictive conditions on $\Psi$, for every $\alpha < \kappa_\Psi^{-1}$).
We obtain these estimates by comparing the $\Psi$-process with the Kakutani process or the uniform process.

Finally, in order to show that $\A[t]$ converges to $F^\Psi$ despite the presence of noise, we adapt the Kushner--Clark method~\cite[Section 2.1]{Kushner1978}, which was developed for the study of stochastic approximation algorithms. To do so, we show that the sequence of shifted evolutions $\A[t]^{(n)} = \A[t+n]$ is almost surely precompact in a suitable topology, using the previously established tightness of the family $\{\A[t]\}_{t\ge0}$ together with an equicontinuity result. We then show that the limit points of this sequence are fixed points of the operator $\PDEOP$, from which we can conclude that the unique limit is the stationary evolution $\F^* \equiv F^\Psi$. This yields almost sure convergence of the stochastic evolution $\A$.

We remark that there exists a fairly extensive literature dealing with stochastic approximation in infinite-dimensional spaces (see e.g.\ \cite{Walk1977,Yin1992,Cardot2013} and the references therein). However, the results obtained there seem to be substantially too restrictive to apply to our setting. The most serious difficulty arises from the fact that the norm $\candy$, which is our only tool to study convergence of the (deterministic) evolution, is very sensitive to perturbations, due to the absolute value appearing inside the integral. As a consequence, we are not able to directly control the stochastic evolution $\A$ or the noise $\M$ in terms of this norm. For this reason, our proof of Theorem~\ref{thm:main0} does not yield any bounds on the rate of convergence of $\diF[n]$ to $F^\Psi$, although simulations indicate that this convergence is quite fast, possibly polynomial in $n$ (see Figure~\ref{fig:1}, in which the noise is completely invisible despite the high resolution of the data).

\pgfplotsset{
% override style for non-boxed plots
    % which is the case for both sub-plots
    every non boxed x axis/.style={}
}
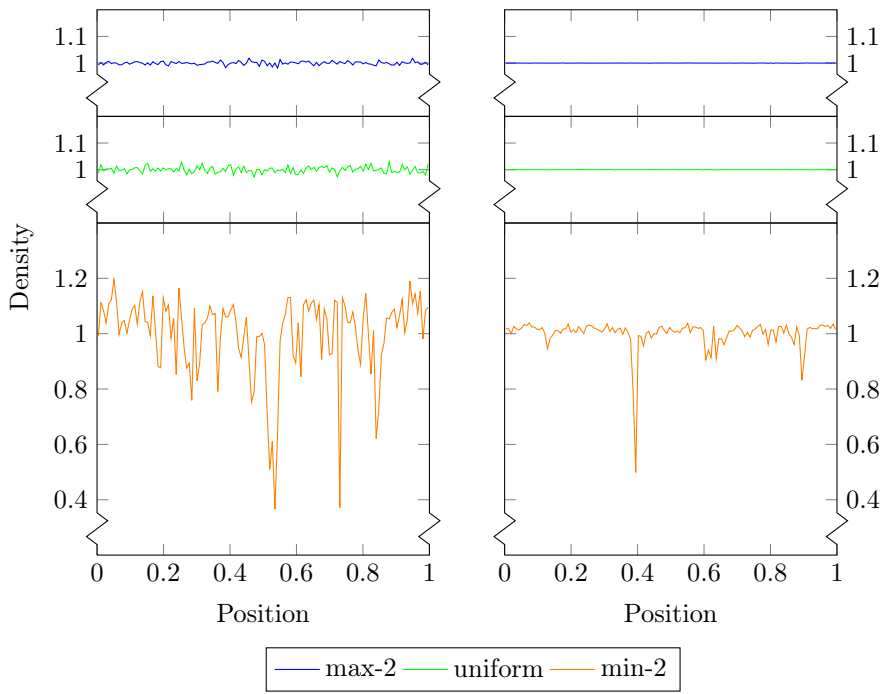
\begin{figure}[t]
\label{fig:2}
\begin{center}
\begin{tikzpicture}
\begin{groupplot}[
group style={
group size=2 by 3,
xticklabels at=edge bottom,
vertical sep=0pt,
xlabels at=edge bottom,
ylabels at=edge left
},
width=6cm,
xmin = 0, xmax = 1,
xlabel=Position,
%ylabel=Density
]
\nextgroupplot[ymin=0.8,ymax=1.2,
	ytick={1,1.1},
	axis y discontinuity=crunch,
	legend to name=grouplegend,
	legend columns=-1,
	height=3.0cm]
\addplot[blue] table {./data/max-2-1M-00-positions-1-xmax-128-bins.dat};
\addlegendentry{max-$2$}
\addlegendimage{green}
\addlegendentry{uniform}
\addlegendimage{orange}
\addlegendentry{min-$2$}

\nextgroupplot[ymin=0.8,ymax=1.2,
	ytick={1,1.1},
	axis y discontinuity=crunch,
	yticklabel pos=right,
	height=3.0cm]
\addplot[blue] table {./data/max-2-1G-15-positions-1-xmax-128-bins.dat};

\nextgroupplot[ymin=0.8,ymax=1.2,
	ytick={1,1.1},
	axis y discontinuity=crunch,
	height=3.0cm]
\addplot[green] table {./data/unif-1M-00-positions-1-xmax-128-bins.dat};

\nextgroupplot[ymin=0.8,ymax=1.2,
	ytick={1,1.1},
	axis y discontinuity=crunch,
	yticklabel pos=right,
	height=3.0cm]
\addplot[green] table {./data/unif-1G-00-positions-1-xmax-128-bins.dat};

\nextgroupplot[ymin=0.2,ymax=1.4,
	ytick={0.4,0.6,0.8,1,1.2},
	axis y discontinuity=crunch,
	height=6.0cm]
\addplot[orange,samples=50] table {./data/min-2-1M-00-positions-1-xmax-128-bins.dat};

\nextgroupplot[ymin=0.2,ymax=1.4,
	ytick={0.4,0.6,0.8,1,1.2},
	axis y discontinuity=crunch,
	yticklabel pos=right,
	height=6.0cm]
\addplot[orange,samples=200] table {./data/min-2-1G-15-positions-1-xmax-128-bins.dat};
%\addplot[green,thick, shift={(0, 1.2)}] table {data/unif-1M-00-positions-1-xmax-128-bins};

\end{groupplot}
\node at (5,-6.3) [inner sep=0pt,anchor=north, yshift=-5ex] {\ref{grouplegend}};
\node[label=left:\rotatebox{90}{Density}] at (-0.6,-2) {};

\end{tikzpicture}
\caption{Empirical density of points in the unit interval in simulation of max-2, uniform and min-2 processes with $10^6$ points (left) and $10^9$ points (right). For the plot, the x-axis has been discretized into 128 equally sized bins.}
\end{center}

\end{figure}

\subsection*{Discussion}

The max-$k$ choice and min-$k$ choice models are inspired by the general paradigm known as the ``power of 2 choices,'' which has seen considerable attention in the computer science and random graph literature~\cite{ABKU99,Achlioptas,riordan}.  Suppose one throws $n$ balls into $n$ bins, each uniformly at random, it is a simple exercise to see the maximum load (i.e. the number of balls in the fullest bin) is about $\log n/\log\log n.$ In Azar~et~al.\ \cite{ABKU99}, $n$ balls are thrown into $n$ bins, but for each ball, two bins are selected uniformly at random and the ball is placed in the bin with fewer balls.  This is seen to reduce the maximal number of balls in a bin to $\log_2 \log n,$ a considerable decrease from the same model without the two choices.  If one instead chooses the bin with the larger load, the maximal load increases to about $2\log n$ (see~\cite{DSKrM}). Similar considerations by the second author and Malyshkin~\cite{Malyshkin2013} show that the same conclusions hold in the min-choice case if the bins are sampled in a size-biased manner.
%hile in the max-choice case with size-biasing, the maximal load is of order $n/\log n$ (~\cite{MalyshkinPaquetteInPrep}).
% random graph literature~\cite{ABKU99,Achlioptas,riordan}.  

It is not clear to us whether there is a direct correspondence between the balls-and-bins model and our interval splitting process. However, in both models, the evolution of the large objects (the bins with high load/the large intervals) is simply accelerated by a factor of 2 in the max-version, whereas it is substantially slowed down in the min-version. To wit, in the uniform splitting model, the size of the largest interval is $\approx \log n/n$ \cite{Darling1953,Whitworth1897}. In the max-2 process, the tail of the interval distribution is of order $e^{-2x}$, which suggests that the size of the largest interval is $\approx \frac 1 2 \log n/n$. In the min-2 process on the other hand, the size of the largest interval is $n^{-1/2+o(1)}$ and thus on a completely different scale, mirroring what occurs in the balls-and-bins model (without size biasing).
%This is despite the fact that intervals are selected according to their size biased distribution.

There are many other interval subdivision models that are related directly or indirectly to the $\Psi$-process.  Brennan and Durrett~\cite{Brennan1987} study a model where each interval evolves independently, and an interval of length $L$ is subdivided with rate $L^{\alpha}.$  This is exactly the uniform process in the case $\alpha=1,$ and they show that the empirical interval distribution converges to a distribution with density proportional to $e^{-y^{\alpha}}.$  This work in turns sits within the larger class of fragmentation processes, see \cite{Bertoin2006} for a comprehensive account. Another, fairly different, interval split-merge model arises in the study of compositions of random transpositions, see \cite{Diaconis2004,Schramm2005}.

\subsection*{Open questions}

As mentioned above, Theorem~\ref{thm:main0} does not yield any information about the rate of convergence to the limiting interval distribution which therefore remains an open question. One could even expect a central limit theorem to hold.

The size of the largest interval in the process is a natural object to study. Here, we only have very crude estimates (see Section~\ref{sec:largest_interval}). One might expect that its magnitude can be deduced from the limiting interval distribution: it should be of the order of $\overline F^{-1}(1/n)/n$, where $\overline F$ is the tail of the (non-size-biased) limiting interval distribution.

Another interesting open problem is to study the spatial positions of the points in the $\Psi$-process. We believe that the limiting empirical distribution is always uniform (although the min-$k$ choice process displays extremely slow convergence, see Figure~\ref{fig:2}). This is indeed the case for the above-mentioned Kakutani process \cite{Lootgieter1977,Zwet1978,Slud1978}, but the methods do not carry over. One of the motivations for proving Theorem~\ref{thm:main0} is that it could help resolve that question. For a restricted class of $\Psi$-processes including the max-2-process, Matthew Junge \cite{Junge} has recently proved this conjecture by extending the methods from this article.

The problem of the spatial positions of the points originates with a problem posed to us by Itai Benjamini about a similar, albeit technically quite different problem.  Once again, consider throwing pairs of points on the circle.  Now, keep the point that is \emph{farthest} from other points and discard the point which is closest.  One can similarly define a process that does the reverse.  The evolution of the interval distribution in this case now becomes substantially more complicated, and simulations give very strong evidence that the limiting interval distributions are different.  Nevertheless, we expect that the points are almost surely equidistributed on the circle.  This problem can be naturally generalized to other classes of homogeneous spaces.

\subsection*{Overview of the article}

In Section~\ref{sec:definitions}, we introduce the main objects dealt with in this paper, among them a continuous version of the interval splitting process, the above-mentioned operator $\PDEOP$ and some functional spaces. Some fundamental properties of $\PDEOP$ are established in Section~\ref{sec:evo_prop}. Section~\ref{sec:candy_proof} proves the important Proposition~\ref{prop:candy}, which is the key to the existence of a unique limit $F^\Psi$ to the evolution equation. In Section~\ref{sec:largest_interval}, we turn to the stochastic evolution and give bounds for the size of the largest interval. In Section~\ref{sec:entropy}, we establish entropy bounds on the stochastic evolution used to yield tightness. Section~\ref{sec:convergence} then uses the results of the previous sections to prove convergence of the stochastic evolution $\A[t]$ to a deterministic limit. Section~\ref{sec:proof} contains the proof of Theorem~\ref{thm:main0} and of the portmanteau-type Lemma~\ref{lem:candy_portmanteau}. Finally, Section~\ref{sec:limit_profile} contains several results about properties of the limiting distribution.

\subsection*{Acknowledgements}

We are grateful to Itai Benjamini, who asked us a question which motivated this research.  We would also like to thank Matthew Junge and the referee for their close reading and helpful comments.  The computer simulations have been dutifully executed by the cluster of the Weizmann Institute of Science, Department of Mathematics and Computer Science. Node \texttt{n68} in particular has done a tremendous job and is hereby thanked.

\section{Definitions}
\label{sec:definitions}

In this section, we define the objects used in this article. All notation used in later sections is either defined there or in this section.

Throughout the paper, we will assume that $\Psi$ is the distribution function of a probability measure on $(0,1]$. Whenever we enforce stronger assumptions on $\Psi$, we will state them explicitly. The following two assumptions will appear quite often:
\begin{itemize}
	\item[(C)] $\Psi$ is continuous.
	\item[(D)] There exist $c>0$ and $\kappa_\Psi\in[1,\infty)$, such that $1-\Psi(u) \ge c(1-u)^{\kappa_\Psi}$ for all $u\in(0,1)$.
\end{itemize}

We define a continuous version of the $\Psi$-process which is technically convenient to work with.
%Let $\tilde \A[t]$ be the distribution function of the size-biased distribution of the interval lengths, without rescaling. We can formalize the dynamics of $\tilde \A[t]$ as follows.
Let $\Pi$ be a Poisson random measure on $[0,\infty) \times [0,1]^2$ with intensity
\(
e^tdt \otimes d\Psi(u) \otimes dv.
\)
We define a random family of distribution functions $(\Atilde[t])_{t\ge0}$ as follows: set $\ell_t(u) := \Atilde[t-]^{-1}(u)$ and define
%{\color{red} Introduce $\Atilde[0]$. Briefly state the relation between $\Atilde$ and to $\diFtilde$.}
\begin{align}
\label{eq:evolution}
\Atilde[t](x) &= \Atilde[0](x) + \sum_{(s,u,v)\in\Pi,\,s\le t} B(s,u,v,x),\quad \text{with} \\
\nonumber
B(s,u,v,x) &= \ell_s(u)\one[\ell_s(u) > x] \\
\nonumber
&\times \left(
v \one[\ell_s(u) v \leq x]
+(1-v)\one[\ell_s(u) (1-v) \leq x]
\right),
\end{align}
Note that by definition, $\Atilde[t](x)$ is increasing in $t$.

The relation between the process $(\Atilde[t])_{t\ge0}$ and the sequence $(\diFtilde[n])_{n \geq 0}$ defined in the introduction is the following: if we set $\Atilde[0] = \diFtilde[0],$ then with $\tau_n$ the time at which the $n$-th point appears in the Poisson process $\Pi$, we have $(\Atilde[\tau_n])_{n \geq 0}$ has the same distribution as $(\diFtilde[n])_{n \geq 0}.$

For every bounded Borel function $f$,
we have by definition
\[
\int_0^1 f(\ell_t(u))\,d\Psi(u) = \int_0^1 f(\Atilde[t-]^{-1}(u))\,d\Psi(u).
\]
Changing variables in the integral on the right yields the following useful formula:
\begin{equation}
\label{eq:f_ell}
\int_0^1 f(\ell_t(u))\,d\Psi(u) = \int_0^\infty f(z)\,d\Psi(\Atilde[t-](z)).
\end{equation}

Define the filtration $\filt = (\filt_t)_{t\ge0}$, where $\filt_t = \sigma(\Pi|_{[0,t]\times [0,1]^2})$. For every $x$, the process $(\Atilde[t](x))_{t\ge0}$ is a semimartingale with respect to $\filt$. In order to obtain its semimartingale decomposition, we need to calculate first and second moments of $B(t,u,v,x)$ conditioned on $\filt_{t-}$. By symmetry, we have for every $t$ and $x$,
\begin{align*}
\iint B(t,u,v,x)\,d\Psi(u)\,dv &= \int_0^1\ell_t(u)\one[\ell_t(u) > x] 2\int\limits_0^{ x /{\ell_t(u)}}v\,dv\,d\Psi(u)\\
&= x^2 \int_0^1 \frac 1 {\ell_t(u)}\one[\ell_t(u) > x]\,d\Psi(u),
\end{align*}
so that by \eqref{eq:f_ell},
\begin{equation}
\label{eq:B_exp}
\iint B(t,u,v,x)\,d\Psi(u)\,dv = x^2 \int_x^\infty \frac{1}{z}\,d\Psi(\Atilde[t-](z)).
\end{equation}
We therefore have for every $x\ge0$ the following semimartingale decomposition of $(\Atilde[t](x))_{t\ge0}$ (a detailed justification follows along the lines of the proof of Lemma~\ref{lem:Ht}):
\[
\Atilde[t](x) = \Atilde[0](x) + \int_0^t e^s x^2 \left[\int_x^\infty \frac{1}{z} d\Psi(\Atilde[s](z))\right]\,ds  + \Mtilde[t](x),
\]
for some local martingale $\Mtilde[t](x)$. Since $\int_x^\infty \frac{1}{z} d\Psi(\Atilde[s](z)) \leq 1/x$ and $|\Atilde[t](x)|\leq 1$ for all $t,x\geq 0$, $\Mtilde[t](x)$ is a martingale. Its quadratic variation will be calculated in Section~\ref{sec:convergence}.

We now define $\A[t](x) = \Atilde[t](e^{-t}x)$, so that
\[
\A[t](x) = \A[0](e^{-t}x) + \int_0^t (e^{s-t}x)^2 \left[\int_{e^{s-t}x}^\infty \frac{1}{z} d\Psi(\A[s](z))\right]\,ds + \M[t](x),
\]
with $\M[t](x) = \Mtilde[t](e^{-t}x)$.  We will see in Section~\ref{sec:convergence}, that as $t \to \infty,$ $\M[t]$ becomes vanishingly small in an appropriate norm on functions.  Thus, the function $\A[t]$ evolves to resemble a fixed point of a certain evolution, which we will now formalize.

Define the space $\Lloc$ of locally integrable functions $f:[0,\infty)\to\R$, endowed with the following canonical metric $\dblah$,
\[
\dblah(f,g) = \sum_{k=1}^\infty 2^{-k} \wedge \int_0^k |f(x)-g(x)|\,dx,
\]
which makes $\Lloc$ into a complete separable metric space. Define the subspace $\DSpace\subset\Lloc$ of subdistribution functions by
\[
\DSpace = \left\{
F : [0, \infty] \to [0,1],\text{ \cadlag, increasing}
\right\}.
\]
In this paper we will reserve the term \emph{distribution function} for cumulative distribution functions of probability measures. We include the following result for completeness:

\begin{lemma}
\label{lem:L1loc_pointwise}
For $F,F_1,F_2,\ldots\in\DSpace$, $F_n\to F$ with respect to $\dblah$ if and only if $F_n(x)\to F(x)$ at every point of continuity $x$ of $F$. 
\end{lemma}
\begin{proof}
The ``if'' direction follows from dominated convergence taking into account the fact that an increasing function has at most countably many discontinuities. The ``only if'' direction follows by elementary arguments: first, by considering  $\max(F_n,F)$ and $-\min(F_n,F)$ seperately, we can assume that $F_n\ge F$. If $x$ is a point of continuity of $F$ and $\ep>0$, let $\delta>0$ such that $F(x+\delta)-F(x) < \ep$. Then, by monotonicity of $F_n$ and $F$,
\[
 \int_x^{x+\delta} (F_n(y)-F(y))\,dy \ge \int_x^{x+\delta} (F_n(x)-F(x)-\ep)\,dy = \delta(F_n(x)-F(x)-\ep).
\]
Since the left-hand side vanishes as $n\to\infty$, we must have $F_n(x)-F(x) < 2\ep$ for large $n$. Since $\ep>0$ was arbitrary this implies $F_n(x)\to F(x)$ as $n\to\infty$ and finishes the proof.
\end{proof}

%Recall that for $F,F_1,F_2,\ldots\in\DSpace$, $F_n\to F$ with respect to $\dblah$ if and only if $F_n(x)\to F(x)$ at every point of continuity $x$ of $F$. 
Lemma~\ref{lem:L1loc_pointwise} and Helly's selection theorem imply in particular that $(\DSpace,\dblah)$ is a compact metric space.

We will make extensive use of the following quantity. Define for $f\in\Lloc$:
\[
\candy[f] = \int_0^\infty x^{-2}|f(x)|\,dx\in[0,\infty].
\]
%To prove Theorem \ref{thm:main0}, we will ultimately show that $\diF[n]$ converges in $\dblah$.  From this convergence, we would like to know in addition that $\mu_n$ converges.  
For $F \in \DSpace,$ we define the \emph{underlying measure} $x^{-1}dF.$  The next lemma identifies $\candy[F]$ as the mass of this underlying measure. 
%which in principle could be poorly behaved due to the singular nature of $x^{-1}.$ 
%The norm $\candy$, however, is useful for controlling this very problem.
\begin{lemma}
	Let $F \in \DSpace$ have $\candy[F] < \infty$ and let $dF$ be the Lebesgue--Stieltjes measure of $F,$ then $dF$ has no atom at $0$ and $\candy[F] = \int_0^\infty x^{-1}\,dF(x).$  
	%In particular, the underlying measure $\nu$ of $F$ has finite mass $\nu((0,\infty)) = \candy[F]$.
	\label{lem:candy_fubini}
\end{lemma}
\begin{proof}
	Let $F \in \DSpace$ have $\candy[F] <\infty$ and let $dF$ be as stated.  As for all $x \geq 0,$ $ F(x) \geq F(0),$ the finiteness of $\int_0^\infty x^{-2} F(x)\,dx$ implies that $F(0)=0,$ i.e.\,that $dF$ has no atom at $0.$  
Applying the Fubini-Tonelli theorem,
\begin{equation*}
%\label{eq:sizebias}
	\candy[F]=
\int\limits_0^\infty x^{-2}\left[\int_0^x \,dF(z)\right]\,dx
=\int\limits_0^\infty \left[\int_z^\infty x^{-2}\,dx\right]\,dF(z)
=\int\limits_0^\infty x^{-1}\,dF(x).
\end{equation*}
%The second statement is immediate.
%\end{remark}
\end{proof}

\noindent As a consequence, since $\diF[n](x) = \int_0^{x} y\,\mu_n(dy)$ and $\diFtilde[n](x) = \int_0^{x} y\,\widetilde{\mu}_n(dy)$ with $\widetilde{\mu}_n = \sum_{i=1}^{n+n_0} \delta_{I_i^{(n)}}$, we have that $\candy[ \diF[n] ] = \mu_n((0,\infty)) = 1$ and $\candy[\diFtilde[n]] = \widetilde{\mu}_n((0,\infty)) = n+n_0$ for all $n \geq 0.$  %As $\Atilde[\tau_n]$ has the same distribution as $\diFtilde$ for all $n \geq 0,$ $\candy[\Atilde[t]] = e^t\cdot \candy[ \A[t]]$ is just the number of points that have arrived by time $t,$ for all $t \geq 0.$  

Define a subspace $\DSpace_1\subset\DSpace$ by
\[
\DSpace_1 = \left\{
F\in\DSpace: \candy[F] \leq 1
\right\}.
\]
%It will be useful to note that $\DSpace_1$ can be made into a well-behaved metric space.  
First note that by Fatou's lemma, $\DSpace_1$ is a closed subset of $(\DSpace,\dblah).$ We define a second metric on $\DSpace_1$ by $\dcandy(F,G) = \candy[F-G]$. Note that by definition, $\candy[F] \le 1$ for every $F\in\DSpace_1$, whence
\begin{equation}
\dcandy(F,G) \le \candy[F]+\candy[G]\le 2,\quad\forall F,G\in\DSpace_1.
\label{eq:dcandy_trivial_bound}
\end{equation}
Moreover, using $\dcandy,$ the space $\DSpace_1$ becomes a complete metric space.
\begin{lemma}
\label{lem:complete_candy}
The metric space $(\DSpace_1,\dcandy)$ is complete.
\end{lemma}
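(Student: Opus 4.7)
The plan is the standard one for completeness of a subset of a compact space under a stronger metric: extract a limit using compactness of $\mathcal{D}_1$ under the weaker metric $d_{\Lloc}$, then upgrade to convergence in $d_{x^{-2}}$ via Fatou's lemma, using the Cauchy property as the quantitative input.

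Let $(F_n)$ be a $d_{x^{-2}}$-Cauchy sequence in $\mathcal{D}_1$. Because $\mathcal{D}_1$ is a closed subset of the compact metric space $(\mathcal{D}, d_{\Lloc})$ (as noted in the paper, via Helly's selection theorem and Fatou's lemma for the size-bias bound), there exist a subsequence $(F_{n_k})$ and some $F \in \mathcal{D}_1$ with $F_{n_k} \to F$ in $d_{\Lloc}$. Equivalently, $F_{n_k}(x) \to F(x)$ at every continuity point $x$ of $F$. Since $F$ is increasing, its set of discontinuities is at most countable and hence has Lebesgue measure zero, so $F_{n_k} \to F$ almost everywhere on $[0,\infty)$.

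Now fix $n$. The functions $x \mapsto x^{-2}|F_n(x) - F_{n_k}(x)|$ are nonnegative and converge almost everywhere to $x \mapsto x^{-2}|F_n(x) - F(x)|$ as $k \to \infty$. By Fatou's lemma,
\[
\int_0^\infty x^{-2}|F_n(x) - F(x)|\,dx
\;\le\; \liminf_{k\to\infty} \int_0^\infty x^{-2}|F_n(x) - F_{n_k}(x)|\,dx
\;=\; \liminf_{k\to\infty} d_{x^{-2}}(F_n, F_{n_k}).
\]
For any $\varepsilon > 0$, the Cauchy property supplies $N$ such that $d_{x^{-2}}(F_n, F_m) < \varepsilon$ for all $n, m \ge N$. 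Taking $n \ge N$ and letting $k \to \infty$ in the above (so that $n_k \ge N$ eventually) yields $d_{x^{-2}}(F_n, F) \le \varepsilon$. Hence $F_n \to F$ in $d_{x^{-2}}$, and $F \in \mathcal{D}_1$ since $\mathcal{D}_1$ is $d_{\Lloc}$-closed.

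The only mild subtlety is justifying Fatou's lemma despite pointwise convergence only at continuity points of $F$; this is handled by the observation above that the set of discontinuities of the monotone $F$ is countable and thus Lebesgue-null. Everything else is essentially bookkeeping on the two metrics.
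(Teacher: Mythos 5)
Your proof is correct, and the core of the argument — obtain a candidate limit $F$ via the weaker $d_{\Lloc}$ topology, pass to almost-everywhere pointwise convergence (using that a monotone limit has at most countably many discontinuities), then upgrade to $\dcandy$-convergence by combining Fatou's lemma with the Cauchy hypothesis — is exactly the one the paper uses. The one place you diverge is in how you produce the candidate limit: you invoke compactness of $(\DSpace_1, d_{\Lloc})$ to extract a $d_{\Lloc}$-convergent subsequence, whereas the paper first proves the quantitative domination $\dblah(F,G) \leq 2^{-K} + K^2\, \dcandy(F,G)$ for every $K$, concludes that a $\dcandy$-Cauchy sequence is automatically $d_{\Lloc}$-Cauchy, and then appeals to completeness of the whole sequence (no subsequence needed). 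Your route is slightly more economical here since it avoids proving the inequality, but the paper's inequality is not wasted: it is reused later in the proof of Theorem~\ref{thm:main} (as equation~\eqref{eq:dblah_domination}) to transfer a $\dcandy$ bound into a $\dblah$ bound. Both mechanisms are valid; once you have $F_n \to F$ along a subsequence in $\dcandy$, the Cauchy property forces the full sequence to converge, so nothing is lost.
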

\begin{proof}
Note that for any $F,G \in \DSpace_1,$ we have that for any $K \in \N,$
\begin{equation}
\label{eq:dblah_domination}
\dblah(F,G) \leq 2^{-K} + \int_0^K |F(x) - G(x)|\,dx \leq 2^{-K} + K^2 \dcandy(F,G),
\end{equation}

	By \eqref{eq:dblah_domination}, it follows that if $(F_n)_{n \geq 0} \subset \DSpace_1$ is Cauchy in the $\dcandy$ metric, it is also Cauchy in the $\dblah$ metric.  In particular, given a $\dcandy$-Cauchy sequence $(F_n)_{n\ge0} \subset \DSpace_1,$ we have by the completeness of $\DSpace_1$ that there is an~$F\in\DSpace_1$ so that $\dblah(F_n,F) \to 0$ as $n\to\infty$.  
	%Then $F_n \to F$ pointwise almost everywhere as $n\to \infty,$ and by Fatou's lemma, $F \in \DSpace_1.$  By the triangle inequa
In particular, $F_m\to F$ pointwise almost everywhere as $m\to\infty.$ Fatou's lemma now yields for every $n\ge0$, $\dcandy(F_n,F) \le \liminf_{m\to\infty} \dcandy(F_n,F_m)$. Since $(F_n)_{n\ge0}$ is a  $\dcandy$-Cauchy sequence, this gives $\dcandy(F_n,F)\to0$ as $n\to\infty$, which finishes the proof.
%In particular, there exists a sequence $(n_m)_{m\ge0}$, such that $F_{n_m}\to F$ pointwise almost everywhere as $m\to\infty$. Fatou's lemma now yields for every $n\ge0$, $\dcandy(F_n,F) \le \liminf_{m\to\infty} \dcandy(F_n,F_{n_m})$. Since $(F_n)_{n\ge0}$ is a  $\dcandy$-Cauchy sequence, this gives $\dcandy(F_n,F)\to0$ as $n\to\infty$, which finishes the proof.
\end{proof}

%As a further consequence of Lemma~\ref{lem:candy_fubini}, if $F \in \DSpace$ has $\candy[F] < \infty,$ then $x^{-1}dF$ is a finite Borel measure with mean $F(\infty) \le 1$, which we refer to as the \emph{underlying measure} of $F$.  
%If $F \in \DSpace_1$, then the underlying measure is a sub-probability measure.
%
%Moreover, the weak convergence of the underlying measure can be connected to the convergence of the distribution functions.

To prove Theorem \ref{thm:main0}, we will ultimately show that $\diF[n]$ converges in $\dblah$.  From this convergence, we would like to know in addition that $\mu_n$ converges.  
The following lemma gives a sufficient criterion to establish this convergence.  It further shows that this convergence is in fact equivalent to convergence of $\diF[n]$ in the $\dcandy$ sense.
\begin{lemma}
	Let $F,F_1,F_2,\ldots \in \DSpace$ be a sequence of subdistribution functions, all having finite $\candy$ norm.  The following are equivalent:
	\begin{enumerate}[(i)]
		\item $\dblah(F_n,F) \to 0$ and $\candy[F_n] \to \candy[F],$ 
		\item $\dcandy(F_n,F) \to 0,$ and
		\item $x^{-1}dF_n$ converges weakly to $x^{-1}dF.$
	\end{enumerate}
	\label{lem:candy_portmanteau}
\end{lemma}
\noindent We delay the proof to Section \ref{sec:proof}.

%If $F,F_1,F_2,\ldots \in \DSpace_1$ is a sequence with $\dblah(F_n,F) \to 0$ as $n \to \infty,$ then it is straightforward to see that the underlying measures $x^{-1}dF_n$ converge vaguely to $x^{-1}dF.$

%Note that the topology induced on $\DSpace_1$ is equivalent to the topology induced by the vague convergence of the underlying measures.

%This coincides with the usual notion of vague convergence of the associated size-biased measures.  It will be convenient to note that $\DSpace$ sits inside of $\Lloc$ itself, which is also Polish.
%Note also that $C([0,\infty), \DSpace)$ and $C([0,\infty), \Lloc)$ are Polish under the topology of locally uniform convergence.

We now define the space  $\mathcal B([0,\infty),\Lloc)$ of Borel measurable maps from $[0,\infty)$ to $\Lloc$. Elements of this space will always be denoted by boldface characters when no index is present, such as $\F = (F_t)_{t\ge0}$. We endow this space with the topology of locally uniform convergence, which we denote by the symbol $\Dto$. Then $\F^{(n)} \Dto \F$ as $n\to\infty$ if and only if for all compact $K \subseteq [0,\infty)$ and all $t > 0,$
\[
\lim_{n \to \infty} \sup_{0 \leq s \leq t} \int\limits_K | \F[s]^{(n)}(x) - \F[s](x) |\,dx = 0.
\]
The subspaces $\DTSpace,\DTSpace_1\subset\mathcal B([0,\infty),\Lloc)$ are defined by
\[
\DTSpace = \mathcal B([0,\infty),\DSpace),\quad \DTSpace_1 = \mathcal B([0,\infty),\DSpace_1) \subset \DTSpace.
\]
Since $\DSpace$ and $\DSpace_1$ are closed subsets of $\Lloc$, $\DTSpace$ and $\DTSpace_1$ are closed subsets of $\mathcal B([0,\infty),\Lloc)$. Note that $\A,\Atilde\in\DTSpace$.

The spaces of continuous maps $C([0,\infty),\DSpace)$ and $C([0,\infty),\Lloc)$ are closed subsets of $\DTSpace$ and $B([0,\infty),\Lloc)$, respectively. Furthermore, the topology on these spaces can be metrized to make them complete separable metric spaces.
%Let $\DTSpace$ be the space
%\[
%\DTSpace = \left\{
%\F=(F_t)_{t\ge0} : [0,\infty) \to \DSpace, \text{ measurable }
%\right\},
%\]
%and note that $\A,\Atilde\in\DTSpace$.
%We define the following notion of convergence in $\DTSpace$ and write $\F^{(n)} \Dto \F$ if and only if for all compact $K \subseteq [0,\infty)$ and all $t > 0,$
%\[
%\lim_{n \to \infty} \sup_{0 \leq s \leq t} \int\limits_K | \F[s]^{(n)}(x) - \F[s](x) |\,dx = 0.
%\]
%This notion of convergence defines a topology, with respect to which $C([0,\infty),\DSpace)$ is a closed subset of $\DTSpace$.  Further, the same notion of convergence makes $C([0,\infty),\Lloc)$ a complete separable metric space.

At last, between $\DTSpace$ and $C([0,\infty), \Lloc),$ we define the operator $\PDEOP$ given by
\[
\PDEOP(\F)_t(x) = {\F[0]}(e^{-t}x) + \int_0^t (e^{s-t}x)^2 \left[\int_{e^{s-t}x}^\infty \frac{1}{z} d\Psi({\F[s]}(z))\right]\,ds,
\]
and note that it allows us to write $\A = \PDEOP(\A) + \M.$ We will be interested in the following family of fixed points of the operator $\PDEOP$:
\[
\FPSpace = \{\F \in \DTSpace_1: \F = \PDEOP(\F), \forall t \geq 0: \F[t](+\infty) = 1 \text{ and } \{\F[t]\}_{t\geq 0} \text{ tight}.
\},
\]
Here, we recall that a family of distribution functions $\{\F[\alpha]\}_{\alpha\in X}$ on $[0,\infty)$ is tight if for all $\epsilon > 0,$ there is an $N > 0$ sufficiently large such that, for every $\alpha \in X,$ $\F[\alpha](N) > 1-\epsilon.$

\section{Properties of the operator \texorpdfstring{$\PDEOP$}{}}
\label{sec:evo_prop}

We will need to study $\FPSpace$ in the abstract, and one immediate concern is that $\FPSpace$ could be empty.  As a consequence of various compactness properties, we will use the stochastic evolution $\A$ to construct such fixed points for any continuous $\Psi$ in Section~\ref{sec:convergence}, although they could also be constructed through plain discretization.  For the moment, we will suppose that $\FPSpace$ is nonempty to establish some important properties of $\PDEOP$ and elements of $\FPSpace.$
%{\color{red} add a good sentence that this question does not concern us for now}

Fixed points in $\FPSpace$ naturally admit a type of semigroup structure.  This in turn follows from the structure of $\PDEOP.$  To expose these properties, define the operator semigroup $T_t:\Lloc\to\Lloc$ by $T_t F(x) = F(e^{-t}x)$ and the operator $\mathcal{A}:\DSpace\to\Lloc$ by $\mathcal{A}F(x) = x^2\int_x^\infty z^{-1}\,d\Psi(F(z))$. Then, for every $t\ge0$,
\begin{equation}
\label{eq:PDEOP_TA}
\PDEOP(\F)_t = T_t\F[0] + \int_0^t T_{t-s}\mathcal A\F[s]\,ds.
\end{equation}
\begin{lemma}
\label{lem:composition}
Suppose $\F\in\DTSpace$. Define $\G = \F - \PDEOP(\F)$, such that $\G$ is a measurable function from $[0,\infty)$ to $\Lloc$. Define $\G[t]^{(s)} = \G[s+t]-T_t\G[s]$ and $\F[t]^{(s)} = \F[s+t]$ for every $s,t\ge0$. Then $\F^{(s)} = \PDEOP(\F^{(s)}) + \G^{(s)}$.
\end{lemma}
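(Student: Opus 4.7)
The claim is essentially a shift-covariance property of $\PDEOP$, and my plan is to verify it by direct computation, using the representation \eqref{eq:PDEOP_TA} together with the semigroup property $T_t T_s = T_{s+t}$ and a change of variables in the time integral.

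First I would expand $\G[s+t]$ using the definition of $\PDEOP$:
\[
\G[s+t] = \F[s+t] - T_{s+t}\F[0] - \int_0^{s+t} T_{s+t-r}\mathcal A \F[r]\,dr.
\]
Next I would apply $T_t$ to $\G[s]$, pulling $T_t$ through the integral and using $T_t T_{s-r} = T_{s+t-r}$ and $T_t T_s = T_{s+t}$:
\[
T_t \G[s] = T_t\F[s] - T_{s+t}\F[0] - \int_0^{s} T_{s+t-r}\mathcal A \F[r]\,dr.
\]
Subtracting, the $T_{s+t}\F[0]$ terms cancel, and the two integrals collapse to an integral from $s$ to $s+t$. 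The substitution $r = s+u$ then gives
\[
\G[t]^{(s)} = \G[s+t] - T_t\G[s] = \F[s+t] - T_t\F[s] - \int_0^t T_{t-u}\mathcal A \F[s+u]\,du.
\]

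Finally, I would recognize the right-hand side as $\F^{(s)}_t - \PDEOP(\F^{(s)})_t$, since $\F[s+u] = \F^{(s)}_u$ and $T_t\F[s] = T_t \F^{(s)}_0$, so by \eqref{eq:PDEOP_TA},
\[
T_t\F^{(s)}_0 + \int_0^t T_{t-u}\mathcal A \F^{(s)}_u\,du = \PDEOP(\F^{(s)})_t.
\]
Rearranging yields $\F^{(s)} = \PDEOP(\F^{(s)}) + \G^{(s)}$, as desired.

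There is no real obstacle here, the lemma is a bookkeeping identity; the only point of care is keeping track of which of the two time variables is being integrated over and using the semigroup property correctly. The measurability of $\G$ as a map from $[0,\infty)$ into $\Lloc$ is inherited from $\F$ and $\PDEOP(\F)$, so all the objects above are well defined and the computation is meaningful pointwise in $t,s$.
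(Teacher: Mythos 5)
Your computation is correct and uses the same ingredients as the paper's proof: the representation \eqref{eq:PDEOP_TA}, the semigroup identity $T_t T_s = T_{s+t}$, and a change of variables in the time integral. The only difference is cosmetic — you expand $\G^{(s)}_t$ and identify the result as $\F^{(s)}_t - \PDEOP(\F^{(s)})_t$, whereas the paper expands $\PDEOP(\F^{(s)})_t$ and identifies it as $\PDEOP(\F)_{s+t} + T_t\G[s]$ — so this is essentially the paper's argument.
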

\begin{proof}
By \eqref{eq:PDEOP_TA}, we have for every $s,t\ge0$,
\begin{align*}
\PDEOP(\F^{(s)})_t
&= T_t\F[0]^{(s)} + \int_0^t T_{t-r}\mathcal A\F[r]^{(s)}\,dr\\
&= T_t\F[s] + \int_s^{s+t} T_{s+t-r}\mathcal{A}\F[r]\,dr\\
&= T_t(\PDEOP(\F)_s + \G[s]) + \int_s^{s+t} T_{s+t-r}\mathcal{A}\F[r]\,dr\\
&= T_t(T_s \F[0] + \int_0^s T_{s-r}\mathcal A\F[r]\,dr) + T_t\G[s] + \int_s^{s+t} T_{s+t-r}\mathcal{A}\F[r]\,dr\\
&= T_{s+t}\F[0] + \int_0^{s+t} T_{s+t-r}\mathcal{A}\F[r]\,dr +  T_t\G[s]\\
&= \PDEOP(\F)_{t+s} + T_t\G[s].
\end{align*}
By definition of $\G$ and $\G^{(s)}$, this finally yields for every $t\ge0$,
\[
\PDEOP(\F^{(s)})_t + \G^{(s)}_t = \PDEOP(\F)_{s+t} + T_t\G[s] + \G[s+t] - T_t\G[s] = \F[s+t] = \F[t]^{(s)},
\]
which implies that $\F^{(s)} = \PDEOP(\F^{(s)}) + \G^{(s)}$.
\end{proof}
\begin{corollary}
\label{cor:composition}
$\F \in \FPSpace$ implies that $\F^{(s)}$ in $\FPSpace$ for every $s\ge0$, where  $\F[t]^{(s)} = \F[s+t]$ for every $t\ge0$.
\end{corollary}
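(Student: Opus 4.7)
The plan is to check the four defining properties of $\FPSpace$ one by one for $\F^{(s)}$, with the crucial item being the fixed-point equation, which will follow immediately from Lemma~\ref{lem:composition}.

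First, membership in $\DTSpace_1$ is automatic: since $\F\in\DTSpace_1$, we have $\F[r]\in\DSpace_1$ for every $r\ge0$, and in particular $\F[t]^{(s)} = \F[s+t]\in\DSpace_1$ for every $t\ge0$. Measurability of $t\mapsto\F[t]^{(s)}$ follows from the measurability of $\F$ by a trivial change of variables, so $\F^{(s)}\in\DTSpace_1$.

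Second, I would invoke Lemma~\ref{lem:composition} with $\G = \F - \PDEOP(\F)$. Because $\F$ is a fixed point, $\G\equiv 0$, hence $\G[t]^{(s)} = \G[s+t] - T_t\G[s] = 0$ for every $s,t\ge0$. The conclusion of Lemma~\ref{lem:composition} then reads $\F^{(s)} = \PDEOP(\F^{(s)})$, which is exactly the fixed-point property.

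Third, the normalization $\F[t]^{(s)}(+\infty) = \F[s+t](+\infty) = 1$ is immediate from the corresponding property of $\F$. Finally, tightness is inherited trivially: the family $\{\F[t]^{(s)}\}_{t\ge0} = \{\F[r]\}_{r\ge s}$ is a subfamily of the tight family $\{\F[r]\}_{r\ge0}$, hence also tight. Combining these four points yields $\F^{(s)}\in\FPSpace$.

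I do not anticipate a real obstacle: the lemma has already done all the work. The only thing to be careful about is that the lemma is stated for general $\F\in\DTSpace$ and produces $\G^{(s)}$ as the \emph{remainder}, so one must observe that the fixed-point hypothesis kills $\G$ identically rather than merely initially — but this is immediate from $\G\equiv 0$.
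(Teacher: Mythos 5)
Your proof is correct and follows exactly the route the paper intends: the paper states the corollary without proof as an immediate consequence of Lemma~\ref{lem:composition}, and you supply precisely the missing details, namely that a fixed point $\F$ gives $\G\equiv 0$ and hence $\G^{(s)}\equiv 0$, plus the routine verification that membership in $\DTSpace_1$, normalization at $+\infty$, and tightness are inherited by the shifted evolution.
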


One critical property of the operator $\PDEOP$ is that it is continuous with respect to the topologies defined in the previous section.
\begin{lemma}
\label{lem:PDE_continuity}
Assume (C). Then
\(\PDEOP : \DTSpace \to C([0,\infty), \Lloc) \) is continuous.
\end{lemma}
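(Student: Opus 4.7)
The plan is to exploit the decomposition \eqref{eq:PDEOP_TA}, $\PDEOP(\F)_t = T_t\F[0] + \int_0^t T_{t-s}\mathcal{A}\F[s]\,ds$, together with the basic \emph{a priori} bound $|\mathcal{A}F(x)| \le x$ valid for every $F \in \DSpace$ and $x \ge 0$. This bound holds because $d\Psi(F(\cdot))$ is a subprobability measure on $[0,\infty)$, so $\int_x^\infty z^{-1}\,d\Psi(F(z)) \le 1/x$, which multiplied by $x^2$ gives the claim.

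I would first verify that $\PDEOP(\F) \in C([0,\infty), \Lloc)$ for every $\F \in \DTSpace$: continuity of $t \mapsto T_t\F[0]$ in $\Lloc$ follows from dominated convergence (pointwise convergence at continuity points of $\F[0]$ together with the uniform bound $\F[0] \le 1$), and continuity in $t$ of the integral term follows from the fundamental theorem of calculus applied to an integrand that is bounded by $x$ on any compact set.

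For the main assertion, suppose $\F^{(n)} \Dto \F$. For the transport part, a change of variables gives
\[
\sup_{t \in [0,T]} \int_K |T_t\F[0]^{(n)}(x) - T_t\F[0](x)|\,dx \le e^T \int_0^{\max K} |\F[0]^{(n)}(y) - \F[0](y)|\,dy \to 0,
\]
since $\F[0]^{(n)} \to \F[0]$ in $\Lloc$ as implied by $\F^{(n)} \Dto \F$ at $s=0$. For the integral part, the triangle inequality, Fubini, the bound $T_{t-s}|\mathcal{A}F|(x) \le x$, and another change of variables reduce the claim, uniformly in $t \in [0,T]$, to proving
\[
\int_0^T \int_0^M |\mathcal{A}\F[s]^{(n)}(y) - \mathcal{A}\F[s](y)|\,dy\,ds \to 0 \qquad \text{for every } T, M > 0.
\]

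The technical heart of the argument is the following auxiliary claim, which I would prove next: if $F_n, F \in \DSpace$ with $F_n \to F$ in $\Lloc$ and $\Psi$ is continuous, then $\mathcal{A}F_n \to \mathcal{A}F$ in $\Lloc$. For this, $F_n \to F$ at every continuity point of $F$; continuity of $\Psi$ forces $\Psi(F_n(z)) \to \Psi(F(z))$ at the same points, which is enough to deduce vague convergence of the subprobability measures $d\Psi(F_n(\cdot)) \to d\Psi(F(\cdot))$ on $[0,\infty)$. Splitting $\int_x^\infty z^{-1}\,d\Psi(F_n(z))$ at any continuity point $R$ of the limit measure, the compact part $\int_x^R$ converges by vague convergence, while the tail $\int_R^\infty z^{-1}\,d\Psi(F_n(z)) \le 1/R$ is uniformly small; combined with the uniform bound $|\mathcal{A}F_n| \le y$, dominated convergence then yields $\mathcal{A}F_n \to \mathcal{A}F$ in $\Lloc$.

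To pass from this pointwise-in-$s$ statement to the integrated one, I would argue by contradiction: any failing subsequence admits, via the $L^1([0,T]\times [0,M])$ convergence $\F^{(n)} \to \F$ implied by $\Dto$, a further subsequence along which $\F[s]^{(n_{k_j})} \to \F[s]$ in $\Lloc$ for almost every $s \in [0,T]$. The auxiliary claim then applies for each such $s$, and a final dominated convergence in $s$ with envelope $2M$ (from $|\mathcal{A}\F[s]^{(n)} - \mathcal{A}\F[s]| \le 2y \le 2M$) gives the desired convergence along the sub-subsequence, contradicting the assumed failure.

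The main obstacle will be controlling the non-compactly-supported integrand $z^{-1}$ against only vague convergence of the Stieltjes measures $d\Psi(F_n(\cdot))$; the tail bound $1/R$ rescues the argument, but it crucially depends on assumption (C), so that $\Psi(F_n(z)) \to \Psi(F(z))$ holds at enough values of $z$.
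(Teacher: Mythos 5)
Your proof is correct, but it follows a genuinely different path from the paper's. The paper works directly with the double integral defining $\PDEOP$, integrates by parts the inner integral $\int_{e^{s-t}x}^\infty z^{-1}(d\Psi(\F[s](z)) - d\Psi(\G[s](z)))$ to turn it into a boundary term plus an integral of $\Psi(\F[s](z)) - \Psi(\G[s](z))$ against $dz/z^2$, and then concludes by showing that the bounded function $(s,y)\mapsto\Psi(\F[s](y)) - \Psi(\G[s](y))$ converges to zero in measure on compact rectangles when $\F\Dto\G$ (using uniform continuity of $\Psi$), with an $M$-truncation to control the tail. You instead factor the proof through the operator $\mathcal{A}$ from \eqref{eq:PDEOP_TA}, prove the auxiliary claim that $\mathcal{A}:\DSpace\to\Lloc$ is continuous under (C) using vague convergence of the Stieltjes measures $d\Psi(F_n(\cdot))$, and then push this through the time integral via the bound $|\mathcal{A}F(x)|\le x$ and dominated convergence. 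Both routes rely on the same underlying fact (continuity of $\Psi$ combined with pointwise convergence of $F_n$ at continuity points of the limit), but your factoring through $\mathcal{A}$ isolates that step cleanly, whereas the paper's integration by parts avoids having to discuss vague convergence of measures at all. One minor simplification you could make: since $\F^{(n)}\Dto\F$ gives $\sup_{s\le T}\int_K|\F[s]^{(n)}-\F[s]|\,dx\to 0$, the pointwise-in-$s$ convergence $\F[s]^{(n)}\to\F[s]$ in $\Lloc$ in fact holds for \emph{every} $s$, so the final contradiction/subsequence step passing to a.e.\ $s$ is unnecessary — you can apply your auxiliary claim at each $s$ and conclude by dominated convergence in $s$ directly. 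Also be slightly careful when invoking vague convergence against the discontinuous test function $z^{-1}\one[(x,R]]$: this needs $x$ and $R$ to avoid the (at most countable) atom set of $d\Psi(F(\cdot))$, which is harmless here since you only need the conclusion for Lebesgue-a.e.\ $x$, but the justification deserves a sentence.
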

\begin{proof}
Let $\F,\G\in\DTSpace.$  Fix a large $K >0.$ Then we have
%\begin{equation}
%\label{eq:3987}
\begin{align*}
\int\limits_0^K \left| \PDEOP(\F)_t(x) - \PDEOP(\G)_t(x) \right|\,dx\hspace{-1in}&\hspace{1in} \\
&\leq
\int\limits_0^K \left| \F[0](e^{-t}x) - \G[0](e^{-t}x) \right|\,dx \\
&+\int\limits_0^K \int\limits_0^t (e^{s-t}x)^2 \left|
\int_{e^{s-t}x}^\infty \frac{1}{z} (d\Psi(\F[s](z)) -  d\Psi(\G[s](z)))
\right|\,dsdx.
\end{align*}
%\end{equation}
The first of these two integrals clearly goes to $0$ uniformly on compact sets of $t$ if $\F \Dto \G$, since for every $t\ge0$, we have
\[
\int\limits_0^K \left| \F[0](e^{-t}x) - \G[0](e^{-t}x) \right|\,dx \le e^t \int\limits_0^{K} \left| \F[0](x) - \G[0](x) \right|\,dx,
\]
and the integral on the right-hand side goes to 0 by the definition of the convergence  $\F \Dto \G$.
We expand the interior of the second integral by parts so that for every $x$ which is a point of continuity of $\F[s]$ and $\G[s]$,
\begin{align*}
\left|\int_{e^{s-t}x}^\infty \frac{1}{z} (d\Psi(\F[s](z)) -  d\Psi(\G[s](z))) \right|
\hspace{-1in}&\hspace{1in} \\
&\leq
\frac{\left|
\Psi(\F[s](e^{s-t}x)) - \Psi(\G[s](e^{s-t}x))
\right|}{e^{s-t}x} \\
&+
\int_{e^{s-t}x}^\infty
\left|
\frac{\Psi(\F[s](z)) - \Psi(\G[s](z))}{z^2}
\right|\,dz.
\end{align*}
%Using that $\Psi~:[0,1] \to [0,1]$ is Lipschitz, we have that there is a constant $C>0$ so that for any $M > \max K$
We handle the contribution of each of these pieces separately.  For the first integral, we would like to show that for any $T >0,$ if $\F \Dto \G,$
\begin{align*}
%\label{eq:Bisou1}
I_1 :=
\sup_{0 \leq t \leq T}
\int\limits_0^K \int\limits_0^t (e^{s-t}x)^2
\frac{\left|
\Psi(\F[s](e^{s-t}x)) - \Psi(\G[s](e^{s-t}x))
\right|}{e^{s-t}x}\,dsdx
\to 0.
\end{align*}
We reverse the order of integration, and change the integral to be over $y=e^{s-t}x,$ so that this is equivalent to
\begin{align*}
%\label{eq:Bisou1}
I_1 =
\sup_{0 \leq t \leq T}
\int\limits_0^t \int\limits_0^{e^{s-t}K}
y
\left|
\Psi(\F[s](y)) - \Psi(\G[s](y))
\right|\,dsdy.
\end{align*}
Noting that $s-t \leq 0,$ we may use the non-negativity of the integrand to conclude that
\[
I_1 \leq K \int\limits_0^T \int\limits_0^{K}
\left|
\Psi(\F[s](y)) - \Psi(\G[s](y))
\right|\,dsdy.
\]
%{\color{red} Please more details in the following lines, in particular, define what you mean by convergence in measure.}
For some bounded set $V$ of $\R^m$, recall that a sequence of measurable functions $f_n : V \to \R$ is said to converge to $0$ in measure if for all $\epsilon>0,$ $\lambda\{ x~:~|f_n| > \epsilon \} \to 0$ as $n \to \infty$ where $\lambda$ is Lebesgue measure.  For a sequence of bounded functions, convergence in measure is equivalent to $L^1$ convergence to $0$ on $V.$

Thus, under the assumption that $\F \Dto \G,$ for any bounded set of $(s,y)$ in $[0,\infty)^2,$ it follows that $\F[s](y) - \G[s](y)$ converges to $0$ in measure.  From the uniform continuity of $\Psi,$ it follows immediately that $\Psi(\F[s](y)) - \Psi(\G[s](y))$ converges to $0$ in measure.  Thus, from the boundedness of the integrand, we get that if $\F \Dto \G,$
\(
I_1 \to 0.
\)

We then truncate the second integral.  As before, we would like to show that
if $\F \Dto \G,$
\begin{align}
\label{eq:Bisou3}
I_2 :=
\sup_{0 \leq t \leq T}
\int\limits_0^K \int\limits_0^t (e^{s-t}x)^2
\int_{e^{s-t}x}^\infty
\left|
\frac{\Psi(\F[s](z)) - \Psi(\G[s](z))}{z^2}
\right|\,dz\,dsdx
\to 0.
\end{align}
Fix some $M > K,$ then we have that
\begin{align*}
\int_{e^{s-t}x}^\infty
\left|
\frac{\Psi(\F[s](z)) - \Psi(\G[s](z))}{z^2}
\right|\,dz.
\hspace{-1in}&\hspace{1in} \\
&
\leq\frac{2}{M}
+
\frac{1}{(e^{s-t}x)^2}
\int_{0}^M
\left|
{\Psi(\F[s](z)) - \Psi(\G[s](z))}\right|\,dz.
\end{align*}
Applying this bound to~\eqref{eq:Bisou3}, we get that there is a constant $C_{K,T}$ so that
\begin{align*}
I_2 \leq
\frac{C_{K,T}}{M}
+K \int\limits_0^T
\int\limits_{0}^M
\left|
{\Psi(\F[s](z)) - \Psi(\G[s](z))}
\right|\,dz\,ds.
\end{align*}
By the same argument used for $I_1$, if $\F \Dto \G,$ then for each fixed $M,$ this integral goes to $0.$  As we may then make $M$ as large as we like, we get that $I_2 \to 0$ as well.
%In particular, as $\F \Dto \G$ then we have that
%\[
%(e^{s-t}x)^2 \left|
%\int_{e^{s-t}x}^\infty \frac{1}{z} d\Psi(\F[s](z)) - \frac{1}{z} d\Psi(\G[s](z))
%\right|
%\to 0
%\]
%in $\Lloc(dx)$ uniformly on compact sets of $s$ and $t.$  Thus, it follows that
%\[
%\sup_{0 \leq t \leq T}
%\int\limits_K \int\limits_0^t (e^{s-t}x)^2 \left|
%\int_{e^{s-t}x}^\infty \frac{1}{z} d\Psi(\F[s](z)) - \frac{1}{z} d\Psi(\G[s](z))
%\right|\,dsdx
%\to 0.
%\]
\end{proof}

Our goal is ultimately to understand the large $t$ behavior of a function $\F \in \FPSpace$. This in essence requires us to show that the evolution operator associated to \eqref{eq:SPDE} has a type of ergodicity. This is achieved by the following proposition:
\begin{proposition}
\label{prop:candy}
For all $\F,\G\in\FPSpace$, for every $t\ge0$, we have
\[
\candy[ \F[t] - \G[t] ] \leq e^{-t} \candy[\F[0] - \G[0]].
\]
\end{proposition}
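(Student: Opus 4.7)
The natural strategy is to pass to the rescaled coordinates $\Ftilde[t](x):=\F[t](e^t x)$ and $\Gtilde[t](x):=\G[t](e^t x)$, in which the $T_t$ piece of $\PDEOP$ is trivialized. A change of variables $y=e^{-t}x$ in the definition of the norm gives
\[
\candy[\F[t]-\G[t]] \;=\; e^{-t}\,\candy[\Ftilde[t]-\Gtilde[t]],
\]
so the proposition reduces to showing that $t\mapsto\candy[\Ftilde[t]-\Gtilde[t]]$ is non-increasing. Rewriting the fixed-point equation $\F=\PDEOP(\F)$ in these coordinates and differentiating in $t$ yields
\[
\partial_t \Ftilde[t](x) \;=\; e^t x^2 \int_x^\infty \frac{1}{w}\,d\Psi(\Ftilde[t](w)),
\]
and similarly for $\Gtilde$.

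Set $\delta_t(x):=\Ftilde[t](x)-\Gtilde[t](x)$ and $\phi_t(w):=\Psi(\Ftilde[t](w))-\Psi(\Gtilde[t](w))$. Writing formally $\partial_t|\delta_t(x)|=\sgn(\delta_t(x))\partial_t\delta_t(x)$ and differentiating under the integral sign gives
\[
\tfrac{d}{dt}\candy[\delta_t] \;=\; e^t\int_0^\infty \sgn(\delta_t(x)) \int_x^\infty \frac{1}{w}\,d\phi_t(w)\,dx.
\]
I would then proceed in three steps. First, apply Fubini to rewrite this as $e^t\int_0^\infty (H_t(w)/w)\,d\phi_t(w)$, where $H_t(w):=\int_0^w \sgn(\delta_t(x))\,dx$ satisfies the trivial bound $|H_t(w)|\le w$. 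Second, integrate by parts in $w$; the boundary terms vanish since $\phi_t(0)=0$ and $\phi_t(\infty)=\Psi(1)-\Psi(1)=0$, the latter using the condition $\F[t](+\infty)=\G[t](+\infty)=1$ built into the definition of $\FPSpace$. Third, split the resulting expression as
\[
-e^t\int_0^\infty \frac{\sgn(\delta_t(w))\phi_t(w)}{w}\,dw \;+\; e^t\int_0^\infty \frac{H_t(w)\phi_t(w)}{w^2}\,dw.
\]
Monotonicity of $\Psi$ forces the pointwise identity $\sgn(\delta_t(w))\phi_t(w)=|\phi_t(w)|$, while $|H_t(w)|\le w$ gives $H_t(w)\phi_t(w)/w^2\le|\phi_t(w)|/w$. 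The two integrals therefore cancel in the worst case, which yields $\tfrac{d}{dt}\candy[\delta_t]\le 0$.

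The main obstacle is making sense of $\partial_t|\delta_t|$, as $\sgn$ is not smooth and $\candy[\delta_t]$ need not be classically differentiable. I would handle this by the standard smoothing $\rho_\epsilon(u):=\sqrt{u^2+\epsilon^2}-\epsilon$: since $\rho_\epsilon$ is $C^1$ with $|\rho_\epsilon'|\le 1$ and $\rho_\epsilon'$ is odd, the whole chain of manipulations above applies verbatim to $t\mapsto\int_0^\infty x^{-2}\rho_\epsilon(\delta_t(x))\,dx$, with the sign identity replaced by the one-sided inequality $\rho_\epsilon'(\delta_t(w))\phi_t(w)\ge 0$ (from monotonicity of $\Psi$). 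Sending $\epsilon\downarrow 0$ and using dominated convergence (with the a priori bound $\candy[\delta_t]\le 2$ from \eqref{eq:dcandy_trivial_bound} as majorant) will recover the monotonicity of $\candy[\delta_t]$ and finish the proof. A minor secondary point, the $t$-regularity needed to differentiate at all, follows immediately from the integral form of $\PDEOP$, which realizes $\Ftilde[t]$ as an absolutely continuous function of $t$.
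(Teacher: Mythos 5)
Your proof is correct and follows essentially the same route as the paper's: pass to the rescaled coordinates $\Ftilde,\Gtilde$, extract the factor $e^{-t}$ by scaling the norm, differentiate $\candy[\Ftilde[t]-\Gtilde[t]]$ in $t$, and show the derivative is nonpositive via the sign identity forced by monotonicity of $\Psi$ together with a Fubini/integration-by-parts cancellation. Your reorganization (Fubini first, then IBP against the Stieltjes measure $d\phi_t$, with the bound $|H_t(w)|\le w$) is an algebraically equivalent rearrangement of the paper's (IBP first, then integrate against $x^{-2}\,dx$ and apply Fubini), and your mollification of $|\cdot|$ by $\rho_\epsilon$ is a reasonable way to make rigorous the formal differentiation of $|\delta_t|$ that the paper treats somewhat informally.
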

We delay the proof of this proposition until the next section.  The most central consequence of this proposition is that all $\F \in \FPSpace$ share a common large $t$ limit.
\begin{lemma}
\label{lem:unique_F*}
Assume $(C)$ and $\FPSpace \neq \emptyset$. 
Then
there is a unique distribution function $F^\Psi \in \DSpace_1$ so that for any $\F \in \FPSpace$
\[
\candy[\F[t] - F^\Psi] \leq 2e^{-t},\quad\forall t\ge0.
\]
Also, setting $\F^*\equiv F^\Psi$, then $\F^{*} \in \FPSpace.$
%which is identically equal to $F^\Psi$ so that for any $\F \in \FPSpace,$
%\[
%\sup_{t \geq 0} \candy[ \F[t]^{(n)} - \F[t]^{*}] \leq 2e^{-n}.
%\]
Furthermore, $F^\Psi$ is continuously differentiable and for all $x \geq 0,$
\begin{equation}
\label{eq:F_ode_integral}
(F^\Psi)'(x) = x\int_x^\infty \frac 1 z\,d\Psi(F^\Psi(z)).
\end{equation}
Finally, $\candy[F^\Psi] = 1$ and the Lebesgue--Stieltjes measure $x^{-1} dF^{\Psi}(x)$ is a  probability measure with mean 1. 
%If in addition Assumption (C) holds, then $F^\Psi$ is continuously differentiable and \eqref{eq:F_ode_integral} holds for all $x \geq 0.$
\end{lemma}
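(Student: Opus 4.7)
The plan is to first construct $F^\Psi$ as the $\dcandy$-limit of $\F[t]$ along an arbitrary $\F \in \FPSpace$, then show this limit does not depend on the choice of $\F$ and yields a (constant-in-time) element of $\FPSpace$, and finally to extract the integro-differential equation and the normalisation by plugging in to the fixed-point equation. The backbone of the argument is the contraction estimate of Proposition~\ref{prop:candy}; everything else is setting it up correctly and passing to the limit through the nonlinear operator.

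For the first step, fix $\F \in \FPSpace$. By Corollary~\ref{cor:composition}, $\F^{(s)} \in \FPSpace$ for every $s \geq 0$, so applying Proposition~\ref{prop:candy} to the pair $(\F, \F^{(s)})$ and using \eqref{eq:dcandy_trivial_bound} gives
\[
\dcandy(\F[t], \F[t+s]) = \candy[\F[t] - \F^{(s)}_t] \leq e^{-t} \candy[\F[0] - \F[s]] \leq 2 e^{-t}.
\]
Hence $(\F[t])_{t \geq 0}$ is $\dcandy$-Cauchy, and by Lemma~\ref{lem:complete_candy} admits a limit $F^\Psi \in \DSpace_1$. Letting $s \to \infty$ and using Fatou on the integral defining $\candy$ yields $\candy[\F[t] - F^\Psi] \leq 2e^{-t}$. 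Applying Proposition~\ref{prop:candy} directly to two elements $\F, \G \in \FPSpace$ shows $\candy[\F[t] - \G[t]] \to 0$, so $F^\Psi$ is independent of the chosen $\F$, establishing uniqueness.

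Next I would verify $\F^* \equiv F^\Psi$ lies in $\FPSpace$. The constant family is trivially tight once $F^\Psi(+\infty) = 1$, which follows from the tightness of $\{\F[t]\}_{t \geq 0}$: for any $\varepsilon > 0$, there is $N$ (which may be taken to be a continuity point of $F^\Psi$) with $\F[t](N) \geq 1-\varepsilon$ uniformly in $t$, and pointwise convergence at continuity points (available since $\dcandy$-convergence implies $\dblah$-convergence via \eqref{eq:dblah_domination}, hence pointwise a.e.\ convergence) transfers the bound to $F^\Psi$. For the fixed-point property, the estimate $\candy[\F^{(s)}_t - F^\Psi] \leq 2e^{-(s+t)} \leq 2e^{-s}$ is uniform in $t \geq 0$, so \eqref{eq:dblah_domination} yields $\F^{(s)} \Dto \F^*$. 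Since $\F^{(s)} = \PDEOP(\F^{(s)})$ by Corollary~\ref{cor:composition}, the continuity of $\PDEOP$ (Lemma~\ref{lem:PDE_continuity}) lets me pass to the limit on both sides to conclude $\F^* = \PDEOP(\F^*)$. This gluing of the contraction with the continuity of $\PDEOP$ in the relatively weak topology $\Dto$ is where the most care is needed.

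For the final step, specialising $\F^*_t = \PDEOP(\F^*)_t$ to $\F^*_s \equiv F^\Psi$ and substituting $y = e^{s-t}x$ (so $ds = dy/y$) gives
\[
F^\Psi(x) - F^\Psi(a) = \int_a^x y \int_y^\infty \frac{1}{z}\,d\Psi(F^\Psi(z))\,dy \quad \text{for all } 0 < a \leq x,
\]
which shows $F^\Psi$ is absolutely continuous and satisfies \eqref{eq:F_ode_integral} almost everywhere. To prove $\candy[F^\Psi] = 1$, I would integrate $\int_0^\infty x^{-2} F^\Psi(x)\,dx$ by parts; the boundary terms vanish (at $\infty$ because $F^\Psi \to 1$, at $0$ because $F^\Psi(x)/x \leq \mu([0,x]) \to 0$, where $\mu$ is the measure size-biasing to $F^\Psi$), and after inserting the ODE and swapping the order of integration via Fubini the expression collapses to $\int_0^\infty d\Psi(F^\Psi(z)) = \Psi(1) - \Psi(0) = 1$. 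The statement that $\tfrac{1}{x}\,dF^\Psi(x)$ is a probability measure with mean $1$ is then immediate: its total mass is $\candy[F^\Psi] = 1$ and its first moment is $F^\Psi(+\infty) = 1$.
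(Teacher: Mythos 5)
Your proof is correct and follows essentially the same route as the paper: the contraction estimate of Proposition~\ref{prop:candy} combined with Corollary~\ref{cor:composition} and completeness of $(\DSpace_1,\dcandy)$ gives a unique limit, continuity of $\PDEOP$ under $\Dto$ (via \eqref{eq:dblah_domination}) shows the stationary evolution is a fixed point, and the integro-differential equation and normalization are read off from the fixed-point identity. The one place you diverge slightly — and arguably improve on — the paper's argument is the derivation of \eqref{eq:F_ode_integral}: rather than analyzing one-sided difference quotients of $F^\Psi$ at continuity points of $\Psi$ as the paper does, you substitute $y = e^{s-t}x$ directly in the fixed-point identity to obtain the integrated form $F^\Psi(x) - F^\Psi(a) = \int_a^x y\int_y^\infty z^{-1}\,d\Psi(F^\Psi(z))\,dy$ for all $0 < a \le x$, from which absolute continuity on $(0,\infty)$ and the a.e.\ ODE are immediate. (To upgrade to absolute continuity on $[0,\infty)$ you still need $F^\Psi(a)\to 0$ as $a\to 0$, which follows from $\candy[F^\Psi]\le 1$ and monotonicity, but this is a minor point you can be forgiven for leaving implicit.)
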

\begin{proof}
By assumption, there exists $\F\in\FPSpace$, i.e.\ a fixed point of the operator $\PDEOP$. We claim that the (transfinite) sequence $(\F[t])_{t\ge0}$ is Cauchy in $(\DSpace_1,\dcandy)$.
For this, let $s\le t$. By Corollary~\ref{cor:composition}, $\F^{(t-s)}\in\FPSpace$ as well. Proposition~\ref{prop:candy} and \eqref{eq:dcandy_trivial_bound} then imply,
\[
\candy[\F[s]-\F[t]] = \candy[\F[s] - \F[s]^{(t-s)}] \le e^{-s} \candy[\F[0] - \F[0]^{(t-s)}] \le 2e^{-s}.
\]
By Lemma~\ref{lem:complete_candy}, the space $\DSpace_1$ is complete under the metric $\dcandy$, which yields the existence of an $F \in \DSpace_1$ so that $\candy[\F[t] - F] \leq 2e^{-t}$ for every $t \geq 0.$

Suppose that $\G$ is another element of $\FPSpace.$  Then the same argument shows that there is a $G$ so that $\candy[\G[t] - G] \leq 2e^{-t}$ for every $t\ge0$. Therefore, for any $t \geq 0,$
\[
\dcandy(F, G) \leq
\dcandy(F, \G[t]) +
\dcandy(\F[t], \G[t]) +
\dcandy(\G[t], G) \leq 6e^{-t}.
\]
Hence, it follows that $F = G$ and so the limit function $F = F^\Psi$ is unique.  

On account of the tightness of the family $\{\F[t]\}_{t \geq 0},$ which holds by the definition of $\FPSpace,$ we have that $F^\Psi$ is a distribution function. 
As for the stationary evolution, we set $\F[t]^{*} = F^\Psi$ for all $t \geq 0.$  We need only check that this is indeed a fixed point.  Note that $\F^{(n)} \Dto \F^{*}$ as $n \to \infty$. By Assumption (C) and Lemma \ref{lem:PDE_continuity}, $\PDEOP$ is continuous and we may take limits on both sides of the equation
\(
\F^{(n)} = \PDEOP(\F^{(n)})
\)
to conclude that
\(
\F^{*} = \PDEOP(\F^{*}).
\)

Finally, we check the properties of  $F^\Psi$. We write from now on $F=F^\Psi$. As $\F^* = \PDEOP(F^*)$ and $\F[t]^*=F$ for all $t \geq 0,$
 we have that for each fixed $t\ge0,$ 
 \begin{equation}
\label{eq:F_stationary}
F(x) = F(e^{-t}x) + \int_0^t (e^{s-t}x)^2 \left[\int_{e^{s-t}x}^\infty \frac{1}{z} d\Psi(F(z))\right]\,ds.
\end{equation}
Let $u = e^{s-t}x,$ and change the outer integration to be over $u.$  Then we have
\[
  F(x) = F(e^{-t}x) + \int_{e^{-t}x}^x u \left[ \int_{u}^\infty \frac{1}{z} d\Psi(F(z))\right]\,du.
\]
As $\candy[F] \leq 1$, Lemma~\ref{lem:candy_fubini}
gives $F(0) = 0.$
Letting $t \to \infty,$ it follows from monotone convergence that for all $x\geq 0$
 \begin{equation}
\label{eq:F_stationary_2}
  F(x) = \int_{0}^x u \left[ \int_{u}^\infty \frac{1}{z} d\Psi(F(z))\right]\,du.
\end{equation}
As $g(u) = u\int_u^\infty z^{-1}d\Psi(F(z))$ is $L^1_{\operatorname{loc}}[0,\infty),$ it follows from \eqref{eq:F_stationary_2} that $F$ is continuous.  Since $F$ is continuous and $\Psi$ is continuous, it follows that $g$ is in fact continuous.  Hence by \eqref{eq:F_stationary_2}, we conclude that $F$ is continuously differentiable and \eqref{eq:F_ode_integral} holds.

%  and \eqref{eq:F_ode_integral} holds for almost every $x$.  If in addition we have Assumption (C), then $u \mapsto u\int_u^\infty z^{-1}d\Psi(F(z))$ is continuous, and $F$ is continuously differentiable.

%For the last statements, we divide \eqref{eq:F_ode_integral} by $x$ and integrate, so that, by the Fubini--Tonelli theorem,
From Lemma~\ref{lem:candy_fubini}, \eqref{eq:F_ode_integral} and an application of the Fubini-Tonelli theorem,
\begin{align*}
  \candy[F]
  =
  \int_0^\infty x^{-1}{F'(x)}\,dx = \int_0^\infty \left[\int_x^\infty \frac 1 z\,d\Psi(F(z))\right]\,dx
= \int_0^\infty d\Psi(F(z)) = 1.
\end{align*}
Hence $x^{-1}dF^{\Psi}(x)$ is a probability measure.  As $F^{\Psi}$ is a distribution function, $x^{-1}dF^{\Psi}(x)$ has mean $1.$ 
\end{proof}

\section{Proof of geometric decay of fixed points}
\label{sec:candy_proof}
In this section, we prove Proposition~\ref{prop:candy}. Let $\F,\G\in\FPSpace.$ We want to show that for every $t\ge0$,
\begin{equation}
\label{eq:candy_decay}
\candy[ \F[t] - \G[t] ] \leq e^{-t} \candy[\F[0] - \G[0]].
\end{equation}
We first define $\Ftilde[t](x) = \F[t](e^tx)$ and $\Gtilde[t](x) = \G[t](e^tx)$. Then $\Ftilde = \PDEOPtilde(\Ftilde)$ and  $\Gtilde = \PDEOPtilde(\Gtilde)$, where the operator $\PDEOPtilde$ is defined through
\[
\PDEOPtilde(\Ftilde)_t(x) = \Ftilde[0](x) + \int_0^t e^s x^2 \left[\int_x^\infty \frac{1}{z} d\Psi(\Ftilde[s](z))\right]\,ds.
\]
In particular, for every $x\ge0$, the map $t\mapsto \Ftilde[t](x)$
%\PDEOPtilde(\Ftilde)_t(x)$ 
is absolutely continuous and its derivative is given by
\begin{equation}
\label{eq:Ftilde_derivative}
\D[t] 
%\PDEOPtilde(\Ftilde)_t(x) 
\Ftilde[t](x)
= e^tx^2 \int_x^\infty \frac{1}{z} d\Psi(\Ftilde[t](z)).
\end{equation}
Since $\Ftilde[t]$ is $\cadlag$ for every $t$, the above formula then also holds jointly in $x$, for almost every $t\ge0$.

We now claim the following:
\begin{lemma}
\label{lem:candy_decay_tilde}
For every $t\ge 0$, we have
\[
\candy[ \Ftilde[t] - \Gtilde[t] ] \leq \candy[\Ftilde[0] - \Gtilde[0]].
\]
\end{lemma}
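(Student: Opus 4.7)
The plan is to prove the stronger pointwise statement $\frac{d}{dt}\candy[h_t] \le 0$ for a.e.~$t\ge 0$, where $h_t := \Ftilde[t] - \Gtilde[t]$, and then integrate over $[0,t]$. Using~\eqref{eq:Ftilde_derivative}, for a.e.~$t$,
\[
\D[t] h_t(x) = e^t x^2 \int_x^\infty \frac{1}{z}\,d\phi_t(z), \qquad \phi_t(z) := \Psi(\Ftilde[t](z)) - \Psi(\Gtilde[t](z)).
\]
Exchanging $d/dt$ with the $x^{-2}dx$-integral defining $\candy[h_t]$ and using the formal identity $\frac{d}{dt}|h_t(x)| = \sgn(h_t(x))\D[t]h_t(x)$ gives
\[
\frac{d}{dt}\candy[h_t] = e^t \int_0^\infty \sgn(h_t(x)) \int_x^\infty \frac{d\phi_t(z)}{z}\, dx.
\]

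The core computation is a Stieltjes integration by parts in $z$. Since $\phi_t(0-)=0$ (using $\Psi(0)=0$) and $\phi_t(\infty)=0$ (using $\Psi(1)=1$ together with $\F[t](\infty)=\G[t](\infty)=1$, which holds since $\F,\G\in\FPSpace$), we have $\int_x^\infty z^{-1}\,d\phi_t(z) = -\phi_t(x)/x + \int_x^\infty \phi_t(z)/z^2\,dz$. The crucial algebraic observation is that since $\Psi$ is non-decreasing,
\[
\sgn(h_t(z))\,\phi_t(z) = |\phi_t(z)| \quad \text{pointwise,}
\]
because $\phi_t(z)$ always has the same sign as $h_t(z)$, and both sides vanish simultaneously when $\Psi$ is flat on the segment joining $\Ftilde[t](z)$ and $\Gtilde[t](z)$. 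Substituting the integration by parts and swapping the order of integration in the tail term, with $\sigma(z) := \int_0^z \sgn(h_t(x))\,dx$ satisfying the trivial bound $|\sigma(z)| \le z$, yields
\[
\frac{d}{dt}\candy[h_t] = e^t\left[-\int_0^\infty \frac{|\phi_t(z)|}{z}\,dz + \int_0^\infty \frac{\sigma(z)\phi_t(z)}{z^2}\,dz\right] \le 0,
\]
where the inequality uses $|\sigma(z)\phi_t(z)|/z^2 \le |\phi_t(z)|/z$ to dominate the second integral by the first.

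The main technical obstacle will be to rigorously justify the formal manipulation $\frac{d}{dt}|h_t(x)| = \sgn(h_t(x))\D[t]h_t(x)$ (which is singular at zeros of $h_t$) together with the interchange of the $t$-derivative and the $x^{-2}dx$-integral. A clean route is to avoid pointwise $t$-differentiation by working with finite differences: from $h_t(x) - h_s(x) = \int_s^t \D[\tau] h_\tau(x)\,d\tau$ the map $\tau\mapsto h_\tau(x)$ is absolutely continuous for each $x$, hence $|h_t(x)| - |h_s(x)| = \int_s^t \sgn(h_\tau(x))\D[\tau]h_\tau(x)\,d\tau$ (with the convention $\sgn(0)=0$) pointwise, and the corresponding identity for $\candy[h_t]-\candy[h_s]$ then follows by Fubini. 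A smaller additional point is to verify that the boundary terms in the Stieltjes integration by parts vanish, which uses the tightness condition built into $\FPSpace$ and the boundedness of $\phi_t$ to ensure $\phi_t(z)/z\to 0$ as $z\to\infty$.
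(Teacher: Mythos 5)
Your proof follows essentially the same route as the paper's: differentiate $\candy[\Ftilde[t]-\Gtilde[t]]$ in $t$ using~\eqref{eq:Ftilde_derivative}, integrate by parts in $z$, use that $\Psi$ non-decreasing forces $\sgn(h_t)\phi_t = |\phi_t|$, and then observe a cancellation between the resulting diagonal and tail terms via Fubini. The only cosmetic difference is in the final step: the paper bounds $\sgn(h_t(x))\int_x^\infty\phi_t(z)z^{-2}dz$ by $\int_x^\infty|\phi_t(z)|z^{-2}dz$ pointwise in $x$ and then shows the two resulting $x$-integrals coincide by Fubini, while you swap the order of integration first, package the inner $x$-integral as $\sigma(z)=\int_0^z\sgn(h_t(x))\,dx$ with $|\sigma(z)|\le z$, and dominate; these are the same estimate written in opposite orders, and your version keeps the $\sgn(h_t(x))$ factor explicit where the paper's display slightly overstates an inequality as an equality. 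Your closing remarks about justifying $\tfrac{d}{dt}|h_t|=\sgn(h_t)\,\D[t]h_t$ via absolute continuity and finite differences, and about the boundary term at $\infty$ in the Stieltjes IBP, are sound and slightly more careful than what appears in the paper.
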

\begin{proof}
  Set $Y_t = \candy[ \Ftilde[t] - \Gtilde[t] ]$ and set $y_t(x) = | \Ftilde[t](x) - \Gtilde[t](x)|,$ so that $Y_t = \int_0^\infty x^{-2}y_t(x)\,dx.$  For each fixed $x \geq 0,$ the map $t \mapsto \Ftilde[t](x) - \Gtilde[t](x)$ is absolutely continuous.  
Hence, the map $t \mapsto y_t(x)$ is a composition of an absolutely continuous map with the Lipschitz map $u \mapsto |u|,$ from which it follows that $t \mapsto y_t(x)$ is absolutely continuous.  
Moreover, its derivative satisfies the following version of the chain rule for almost all $t \geq 0$ (see \cite[Theorem 3.44]{Leoni}):
\begin{equation}
	\D[t] y_t(x)
	= \sgn(\Ftilde[t](x)-\Gtilde[t](x)) 
	\D[t]( \Ftilde[t](x)-\Gtilde[t](x)),
	\label{eq:chain_rule}
\end{equation}
where the product is taken to be $0$ for all $t \geq 0$ for which $\D[t]( \Ftilde[t](x)-\Gtilde[t](x)) = 0.$

By \eqref{eq:Ftilde_derivative}, we have that for almost all $t \geq 0,$
\[
\D[t](\Ftilde[t](x)-\Gtilde[t](x)) =  e^t x^2 I_t(x),
\]
where we define for all $x,t \geq 0,$
\[
I_t(x) = \int_x^\infty \frac{\D[z](\Psi(\Ftilde[t](z))-\Psi(\Gtilde[t](z)))}{z}\,dz.
\]
% Hence we have (with the convention $\sgn(0) = 0$, for example)
% \[
% \D[t]y_t(x) = e^{t}x^2
% \sgn(\Ftilde[t](x)-\Gtilde[t](x))I_t(x),
% % \begin{cases}
% %  \sgn(\Ftilde[t](x)-\Gtilde[t](x))I_t(x) & \text{ if }\Ftilde[t](x) \ne \Gtilde[t](x)\\
% %  0 & \text{ if }\Ftilde[t](x) = \Gtilde[t](x),
% % \end{cases}
% \]
% which for each $x \geq 0$ holds for almost every $t \geq 0.$  Further, we may write
% \[
%   y_t(x) = y_0(x) + \int_0^t \D[s]y_s(x)\,ds,
% \]
% which now holds for all $t \geq 0$ and all $x \geq 0.$
Hence, we may take as a definition that for all $x,t\geq 0$ (with the convention $\sgn(0) = 0$, for example),
\[
\D[t]y_t(x) = e^{t}x^2
\sgn(\Ftilde[t](x)-\Gtilde[t](x))I_t(x).
\]
As for each $x \geq 0$ this definition satisfies \eqref{eq:chain_rule} for almost every $t\geq 0$ we have for \emph{all} $x,t \geq 0,$
\[
  y_t(x) = y_0(x) + \int_0^t \D[s]y_s(x)\,ds.
\]
In terms of $y_t(x),$ we may now write
\begin{equation}
  \label{eq:Yt_prefubini}
  Y_t 
  = \int_0^\infty x^{-2}y_t(x)\,dx
  = Y_0 + \int_0^\infty x^{-2} \int_0^t \D[s]y_s(x)\,dx
\end{equation}
By Tonelli's theorem, we have that for all $t \geq 0,$
\begin{align*}
  \int_0^\infty x^{-2} \int_0^t |\D[s]y_s(x)|\,dsdx
&\leq \int_0^\infty e^t \int_0^t |I_s(x)|\,dsdx \\
&\leq \int_0^\infty e^t \int_0^t \int_x^\infty z^{-1}(d\Psi(\Ftilde[s](z))+d\Psi(\Gtilde[s](z)))\,dzdsdx \\
&= e^t  \int_0^t \int_0^\infty \left[ \int_0^z 1\,dx\right] z^{-1}(d\Psi(\Ftilde[s](z))+d\Psi(\Gtilde[s](z)))\,dzds \\
&= e^t  \int_0^t \int_0^\infty (d\Psi(\Ftilde[s](z))+d\Psi(\Gtilde[s](z)))\,dzds \\
&= e^t  \int_0^t 2 ds = 2te^t.
\end{align*}
In particular, we may switch the order of integration in \eqref{eq:Yt_prefubini} to get
\begin{equation*}
%  \label{eq:Yt_postfubini}
  Y_t 
  = Y_0 + \int_0^t \int_0^\infty x^{-2}\D[s]y_s(x)\,dxds.
\end{equation*}
% Hence $Y_t$ is absolutely continuous and $\D[t]Y_t = \int_0^\infty x^{-2}\D[s]y_s(x)\,dx$ for almost every $t \geq 0.$
% It is therefore enough to show that for almost every $t\ge0$,
Therefore, if we can show that for almost every $t\geq 0$,
\begin{equation}
\label{eq:candy_derivative}
% \D[t]\candy[ \Ftilde[t] - \Gtilde[t] ] = 
% \D[t]Y_t = 
\int_0^\infty x^{-2} \D[t]y_t(x)\,dx \le 0,
\end{equation}
then $Y_t \le Y_0$ for all $t$, which proves the lemma.

We begin by applying integration by parts to $I_t(x),$ so that
\[
I_t(x)
= -x^{-1}(\Psi(\Ftilde[t](x)) - \Psi(\Gtilde[t](x))) + \int_x^\infty\frac{(\Psi(\Ftilde[t](z))-\Psi(\Gtilde[t](z)))}{z^2}\,dz.
\]
Recall that $\Psi$ is a non-decreasing function, so that $ \sgn(\Ftilde[t](x)-\Gtilde[t](x)) =  \sgn(\Psi(\Ftilde[t](x)) - \Psi(\Gtilde[t](x)))$ as long as $\Psi(\Ftilde[t](x)) \ne \Psi(\Gtilde[t](x))$. We may therefore bound
\begin{align*}
e^{-t}\D[t]y_t(x) 
%&= - x|\Psi(\Ftilde[t](x)) - \Psi(\Gtilde[t](x))| + x^2\left|\int_x^\infty\frac{\Psi(\Ftilde[t](z))-\Psi(\Gtilde[t](z))}{z^2}\,dz\right|\\
&\leq  - x|\Psi(\Ftilde[t](x)) - \Psi(\Gtilde[t](x))| + x^2\int_x^\infty\frac{|\Psi(\Ftilde[t](z))-\Psi(\Gtilde[t](z))|}{z^2}\,dz.
\end{align*}
Multiply both sides by $x^{-2}$ and integrate in $x$ from $0$ to infinity:
\begin{multline*}
%\label{eq:close}
e^{-t}\int_0^\infty x^{-2}\D[t]y_t(x)\,dx \leq - \int_0^\infty\frac{|\Psi(\Ftilde[t](x)) - \Psi(\Gtilde[t](x))|}{x}\,dx \\
 + \int_0^\infty\int_x^\infty\frac{|\Psi(\Ftilde[t](z))-\Psi(\Gtilde[t](z))|}{z^2}\,dz\,dx.
\end{multline*}
The magic is that the last two integrals are actually equal. By the Fubini--Tonelli theorem,
\begin{align*}
\int_0^\infty\int_x^\infty\frac{|\Psi(\Ftilde[t](z))-\Psi(\Gtilde[t](z))|}{z^2}\,dz\,dx
&=\int_0^\infty\frac{|\Psi(\Ftilde[t](z))-\Psi(\Gtilde[t](z))|}{z^2}\left[\int_0^z1\,dx\right]\,dz \\
&=\int_0^\infty\frac{|\Psi(\Ftilde[t](z))-\Psi(\Gtilde[t](z))|}{z}\,dz.
\end{align*}
This implies \eqref{eq:candy_derivative} which concludes the proof of the lemma.
\end{proof}

\begin{proof}[Proof of Proposition~\ref{prop:candy}]
Recall that we have $\F[t](x) = \Ftilde[t](e^{-t}x)$ and $\G[t](x) = \Gtilde[t](e^{-t}x)$ for every $t\ge0$ and $x\ge0$. This gives for every $t\ge0$,
\begin{align*}
\candy[\F[t]-\G[t]] &= \int_0^\infty x^{-2} |\F[t](x)-\G[t](x)|\,dx\\
&= \int_0^\infty x^{-2} |\Ftilde[t](e^{-t}x)-\Gtilde[t](e^{-t}x)|\,dx\\
&=  e^{-t}\int_0^\infty x^{-2} |\Ftilde[t](x)-\Gtilde[t](x)|\,dx \\
&= e^{-t}\candy[\Ftilde[t]-\Gtilde[t]].
\end{align*}
The statement then follows from Lemma~\ref{lem:candy_decay_tilde}.
\end{proof}

\section{Bounds for the largest interval}
\label{sec:largest_interval}
Set $L_t= \max_{u\in(0,1)} \ell_t(u).$  We will begin by showing that $L_t \to 0$ at an exponential rate, using a comparison between the $\Psi$-process and the Kakutani process. We recall that assumption (D) is defined in Section~\ref{sec:definitions}.
\begin{lemma}
\label{lem:max_non_optimal}
Assume (D). Then, for every $\alpha \in(0,(\kappa_\Psi+1)^{-1})$, we have
\[
\Pr(\exists T: L_t \le e^{-\alpha t}\,\forall t\ge T) = 1.
\]
\end{lemma}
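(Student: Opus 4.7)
The plan is to control the total length of intervals exceeding a chosen threshold $\delta$. Define $S_t(\delta) := \sum_i I_i^{(t)}\,\one[I_i^{(t)} > \delta] = 1 - \Atilde[t](\delta)$ and note $S_t(\delta) \in [0,1]$, $S_t(\delta) = 0$ iff $L_t \le \delta$, and $L_t \le S_t(\delta)$ on $\{S_t(\delta) > 0\}$ since the maximum is itself a big interval contributing to the sum. The first step is an identity for the drift: a short case analysis in the split variable $V$ (handling the three regimes where both new pieces, exactly one, or neither exceed $\delta$) shows that splitting an interval of length $I > \delta$ at uniform $V \in (0,1)$ decreases $S$ by exactly $\delta^2/I$ on average. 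Combined with the change-of-variables formula~\eqref{eq:f_ell} and the event intensity $e^t$, this yields
\[
\D[t]\E[S_t(\delta)\mid\filt_{t-}] = -e^t\,\delta^2\int_\delta^\infty \frac{1}{z}\,d\Psi(\Atilde[t-](z)).
\]

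Next, lower bounding $1/z \ge 1/L_t \ge 1/S_t(\delta)$ on the support of the integrand and using assumption~(D) to estimate the total mass as $1 - \Psi(\Atilde[t-](\delta^-)) = 1 - \Psi(1 - S_t(\delta)) \ge c\,S_t(\delta)^{\kappa_\Psi}$, then invoking $S_t(\delta) > \delta$ on $\{S_t(\delta) > 0\}$ together with $\kappa_\Psi \ge 1$, yields the drift bound
\[
\D[t]\E[S_t(\delta)\mid\filt_{t-}] \le -c\,e^t\,\delta^{\kappa_\Psi+1}\,\one[S_t(\delta) > 0].
\]
Consequently $S_t(\delta) + c\,\delta^{\kappa_\Psi+1}\int_0^t e^s\,\one[S_s(\delta) > 0]\,ds$ is a non-negative supermartingale, and taking expectations and sending $t \to \infty$ yields
\[
c\,\delta^{\kappa_\Psi+1}\,\E\bigl[e^{\tau(\delta)} - 1\bigr] \le 1,\qquad \tau(\delta) := \inf\{s : L_s \le \delta\},
\]
so Markov's inequality gives $\Pr(\tau(\delta) > t) \le C/(e^t\,\delta^{\kappa_\Psi+1})$.

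To conclude, fix $\alpha' \in (\alpha, (\kappa_\Psi+1)^{-1})$ and apply this at integer $t = k$ with $\delta_k := e^{-\alpha' k}$: the failure probability becomes $C\,e^{-k(1 - \alpha'(\kappa_\Psi+1))}$, summable in $k$, so Borel--Cantelli gives $L_k \le e^{-\alpha' k}$ for all but finitely many $k$ almost surely. Since $L_t$ is non-increasing (each split strictly shortens the split interval), interpolation $L_t \le L_{\lfloor t\rfloor} \le e^{\alpha'}\,e^{-\alpha' t} \le e^{-\alpha t}$ for all sufficiently large $t$ completes the proof. The main obstacle is the identity $\E_V[\Delta S \mid I] = -\delta^2/I$; once this is in hand, every subsequent step -- supermartingale estimate, Markov's inequality, Borel--Cantelli, interpolation -- is routine. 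Interestingly, this approach does not rely on any Kakutani or uniform-process comparison, which may instead be the tools required for the sharper threshold $\alpha < \kappa_\Psi^{-1}$ alluded to in the methodology section.
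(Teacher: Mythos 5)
Your proof is correct, and it takes a genuinely different route from the paper's. The paper argues by domination: assumption (D) implies the largest interval is split at rate at least $ce^tL_t^{\kappa_\Psi}$, so $L_t$ is dominated by the largest interval in a time-changed Kakutani process (where only the max is split); the asymptotic $L'_tN_t\to 2$ for Kakutani is then cited and combined with a Poisson tail estimate and Borel--Cantelli. Your argument instead tracks $S_t(\delta)=1-\Atilde[t](\delta)$, the size-biased mass of intervals above level $\delta$, and exploits the clean identity that a split of an interval of length $I>\delta$ decreases $S$ by $\delta^2/I$ in expectation (which is precisely the specialization of~\eqref{eq:B_exp} at $x=\delta$). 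Feeding (D) into this drift, and using $S_t(\delta)>\delta$ on the excursion, gives a uniform negative drift which turns $S_t(\delta)+c\delta^{\kappa_\Psi+1}\int_0^t e^s\one[S_s(\delta)>0]\,ds$ into a non-negative supermartingale; this yields $\Exp[e^{\tau(\delta)}]\le 1+\tfrac{1}{c\delta^{\kappa_\Psi+1}}$ for the hitting time $\tau(\delta)$, and from there Markov plus Borel--Cantelli along integer times plus monotonicity of $L_t$ closes the proof with the same threshold $\alpha<(\kappa_\Psi+1)^{-1}$. The computations check out (including the three-case split in $V$, which gives $-\delta^2/I$ in both the $I\ge 2\delta$ and $\delta<I<2\delta$ regimes), and the use of $\kappa_\Psi\ge 1$ in passing from $S_t(\delta)^{\kappa_\Psi-1}$ to $\delta^{\kappa_\Psi-1}$ is exactly where that hypothesis enters. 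Your version is more self-contained, since it avoids invoking the Kakutani law of large numbers from~\cite{Lootgieter1977,Zwet1978}; one small notational slip is the $\delta^-$ in ``$1-\Psi(\Atilde[t-](\delta^-))$'', which should simply be $\delta$ to match your definition $S_t(\delta)=1-\Atilde[t](\delta)$, but this does not affect the argument.
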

\begin{proof}
By the assumption on $\Psi$, the largest interval is split at rate at least $c e^t L_t^{\kappa_\Psi}$ at time $t$. $L_t$ is therefore dominated by the length $L'_t$ of the largest interval in the interval splitting process where \emph{only} the largest interval is split, and this at rate $c e^t (L'_t)^{\kappa_\Psi}$.  This process is a time changed version of the Kakutani process mentioned in the introduction. If $R_t$ denotes the number of times the largest interval has been split in this process, then it is known \cite{Lootgieter1977,Zwet1978} that provided $R_t\to\infty$ as $t\to\infty$,
\begin{equation}
\label{eq:1838}
 L'_tR_t \to 2\text{ almost surely, as $t\to\infty$.}
\end{equation}
Now fix $\alpha \in(0,(\kappa_\Psi+1)^{-1})$ and let $\delta > 0$ such that $\alpha\kappa_\Psi< 1-\alpha-\delta$. Let $t>0$. Since $L'_s$ is decreasing in $s$, we have for $t\ge 1$,
\[
 \int_0^t ce^s (L'_s)^{\kappa_\Psi}\,ds > c(L'_t)^{\kappa_\Psi}(e^t-1) > \frac c 2 (L'_t)^{\kappa_\Psi}e^t.
\]
Standard properties of Poisson processes then imply the existence of a Poisson distributed random variable $P$ with parameter $(c/2)e^{(\alpha+\delta) t}$, such that on the event $\{L'_t > e^{-\alpha t}\}$, we have $R_t \ge P$. In particular, Chebychev's inequality yields that for large $t$,
\[
 \Pr(L'_t > e^{-\alpha t},\,R_t < e^{(\alpha+\delta/2)t}) \le \Pr(P < e^{(\alpha+\delta/2)t}) < e^{-\alpha t}.
\]
It now follows from \eqref{eq:1838} and the Borel--Cantelli lemma applied to the previous equation that $L'_t \le e^{-\alpha t}$ for all large integers $t$. The lemma now follows (with any $\alpha' \in(0,\alpha)$ instead of $\alpha$) from the fact that $L'_t$ is decreasing in $t$. Since $\alpha \in(0,(\kappa_\Psi+1)^{-1})$ was arbitrary, this proves the lemma.
% $e^t(L'_t)^{\kappa_\Psi} > e^{(\alpha+\delta) t}$ for some $\delta > 0$. By standard properties of Poisson process, there exists a Poisson distributed random variable $P$ with parameter
%
% and suppose that $L'_t > e^{-\alpha t}$, then $e^t(L'_t)^{\kappa_\Psi} > e^{(\alpha+\delta) t}$ for some $\delta > 0$. Since $L'_s$ is decreasing in $s$, it follows that $N_t$ stochastically dominates Poisson-distributed random variable with parameter
% \[
%  c(L'_t)^{\kappa_\Psi}\int_0^s e^s \,ds >
% \]
%
% Note that we can assume the existence of a $\beta>0$, such that $e^t(L'_t)^{\kappa_\Psi}/e^{\beta t}\to\infty$ almost surely, because otherwise the lemma is
%
% Let $\alpha < (\kappa_\Psi+1)^{-1}$.
\end{proof}

The following lemma, which is not needed for the proof of Theorem~\ref{thm:main0}, gives the optimal exponent of the rate, under a more restrictive condition on $\Psi$. We believe that the result is true without this extra condition, but were not able to prove it.
\begin{lemma}
\label{lem:max}
Assume that $\Psi$ has an absolutely continuous component whose derivative $\psi$ satisfies $\psi(u)\ge c\kappa_\psi(1-u)^{\kappa_\psi-1}$ in a neighborhood of $1$, for some $\kappa_\psi \ge 1$. Then, for every $\alpha < \kappa_\Psi^{-1}$, we have
\[
\Pr(\exists T: L_t \le e^{-\alpha t}\,\forall t\ge T) = 1.
\]
\end{lemma}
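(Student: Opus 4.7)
The plan is to carry out a finer stochastic analysis of $L_t$ than the Kakutani-style comparison used in Lemma~\ref{lem:max_non_optimal}, exploiting the pointwise density lower bound near $u=1$. The centerpiece is a drift bound for the process $U_t = L_t^{\kappa_\Psi}e^t$, which measures the deviation of $L_t$ from the conjectured scale $e^{-t/\kappa_\Psi}$. The goal is to show $\sup_{t\ge 0}\E[U_t]<\infty$, from which the almost sure bound follows by Markov and Borel--Cantelli.

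Between Poisson splits $L_t$ is constant, so the continuous drift of $U_t$ is simply $U_t$. At each split of the maximum interval, $L_t$ is multiplied by a random factor $\tilde\rho\le 1$. In the ``clean'' configuration, in which the second-largest interval has length at most $L_t/2$, one has $\tilde\rho=\max(V,1-V)$ with $V\sim\Unif(0,1)$, and therefore $\E[\tilde\rho^{\kappa_\Psi}]\le 1-\eta$ for some constant $\eta>0$ (for $\kappa_\Psi\ge 1$). The rate at which the maximum is split is $\lambda_t\ge c e^t L_t^{\kappa_\Psi}$ by hypothesis (D), since the size-biased mass of the maximum intervals is at least $L_t$. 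Combining the continuous drift and the compensated jump contribution, the (clean-case) drift of $\E[U_t]$ satisfies
\[
\frac{d\E[U_t]}{dt} \le \E[U_t] - c\eta\,\E\bigl[e^t L_t^{\kappa_\Psi}\cdot U_t\bigr] = \E[U_t] - c\eta\,\E[U_t^2] \le \E[U_t]\bigl(1 - c\eta\,\E[U_t]\bigr)
\]
by Jensen's inequality. This logistic-type inequality yields $\sup_t \E[U_t]\le \max(\E[U_0],1/(c\eta))<\infty$.

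For any $\alpha < 1/\kappa_\Psi$ we then have $1-\alpha\kappa_\Psi>0$, so Markov's inequality gives
\[
\Pr(L_t>e^{-\alpha t}) = \Pr(U_t>e^{(1-\alpha\kappa_\Psi)t}) \le Ce^{-(1-\alpha\kappa_\Psi)t},
\]
which is summable over $t\in\N$. Borel--Cantelli combined with the monotonicity of $L_t$ (which is nonincreasing, since splits only produce smaller pieces) gives $L_t \le e^{-\alpha t}$ eventually, almost surely.

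The main obstacle, and where the strengthened density hypothesis is crucial, lies in justifying that the ``clean'' case really dominates the drift. If the second-largest interval $M_t^{(2)}$ were allowed to hover arbitrarily close to $L_t$, a split of the maximum would only replace $L_t$ by $M_t^{(2)}$, giving a multiplicative reduction of $U_t$ much smaller than $\eta$ and spoiling the logistic bound. The density lower bound $\psi(u)\ge c\kappa_\psi(1-u)^{\kappa_\psi-1}$ is exactly what prevents such accumulation: any set of near-maximal intervals (corresponding to a $u$-neighborhood of $1$) is split at a rate proportional to the total measure of that neighborhood under $du$, not merely under $d\Psi$. To close the argument quantitatively one needs a bound on the expected number of intervals of length $\ge L_t/2$ (so that splits of the cluster of near-maximal intervals strictly decrease $L_t$ after a bounded number of jumps), or equivalently a coupling of the near-maximal dynamics with a tractable auxiliary process. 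Incorporating this into the compensator computation --- separating the clean and non-clean contributions and showing that the clean case has probability bounded away from zero --- is the technical heart of the proof.
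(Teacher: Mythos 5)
Your proposal takes a genuinely different route than the paper, and it contains a real gap which you yourself flag as "the technical heart" but do not close. The paper's proof of Lemma~\ref{lem:max} does not work with the process $U_t = L_t^{\kappa_\Psi}e^t$ at all. Instead, it first observes that the strengthened density hypothesis gives a \emph{per-interval} splitting rate: any interval of length $\ell$ sitting in the $u$-range $[1-\mathcal L,1]$ (i.e.\ among the larger intervals) is split at rate at least $ce^s\ell^{\kappa_\psi}$, by superadditivity of $t\mapsto t^{\kappa_\psi}$ for $\kappa_\psi\ge1$. For intervals of length $\ell\ge e^{-\alpha t}$ this gives the rate $ce^s\ell^{\kappa_\psi}\ge ce^se^{-(\kappa_\psi-1)\alpha t}\ell$, which is \emph{linear} in $\ell$. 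The collection of large intervals is therefore dominated by a time-changed uniform process, which reduces the problem to bounding the largest spacing in a Poisson process on $[0,1]$ of intensity $\ge e^{(\alpha+\delta)t}$ --- a classical calculation handled by a union bound over a uniform discretization of the circle. The threshold $\alpha < \kappa_\psi^{-1}$ appears exactly as the condition under which this Poisson intensity dominates $e^{\alpha t}$.

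Your drift argument has two issues. First, the logistic inequality $\frac{d}{dt}\E[U_t]\le\E[U_t]-c\eta\E[U_t^2]$ requires the expected multiplicative decrease of $U_t$ at every jump, but when the second-largest interval is comparable to $L_t$ the jump produces no meaningful decrease, so the jump contribution is really $-c\eta\,\E[U_t^2\,\one[\text{clean}]]$, which cannot be lower-bounded by $c\eta(\E[U_t])^2$ without controlling the joint distribution of $(U_t,\text{configuration})$. You correctly identify that one would need, for instance, a moment bound on the number of near-maximal intervals --- but this is precisely what the lemma is trying to prove, so the argument as stated is circular. Second, your rate bound $\lambda_t\ge cU_t$ comes from assumption (D) alone, which already holds in Lemma~\ref{lem:max_non_optimal}; you never actually use the strengthened density bound in a load-bearing way. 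That hypothesis is what makes the paper's comparison with the uniform process possible, because it gives a lower bound on the splitting rate for \emph{each} large interval individually rather than just the very largest. The paper's route entirely bypasses the near-maximal-cluster difficulty on which your approach founders: once the ensemble of large intervals is dominated by a uniform process, the question of how many of them are close in size simply never arises.
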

\begin{proof}
Fix $\alpha < \kappa_\psi^{-1}$. Fix $t>0$. Let $R_t$ be the number of intervals of length greater than $e^{-\alpha t}$ at time $t$. We claim that there exists $\beta > 0$, such that $\Pr(R_t > 0) < e^{-\beta t}$ for large $t$. In order to show this, consider the evolution of the collection of intervals of length at least $e^{-\alpha t}$ between the times $0$ and $t$. By the definition of $\kappa_\psi$, if $t$ is sufficiently large, the rate at which an interval of length $\ell$ splits into two in this process is at least
\[
 ce^s\ell^{\kappa_\psi} \ge ce^se^{-(\kappa_\psi-1)\alpha t}\ell.
\]
This implies that
\(
\Pr(R_t > 0) \le \Pr(R'_t > 0),
\)
where $R'_t$ is the number of intervals of length greater than $e^{-\alpha t}$ at time $e^t-1$ in the process where an interval of length $\ell$ is split at rate $ce^{-(\kappa_\psi-1)\alpha t}\ell$, i.e.\ a time changed uniform process.

This corresponds to asking for the probability that the largest spacing is greater than $e^{-\alpha t}$ in a Poisson process on $[0,1]$ with intensity $c(e^{t}-1)e^{-(\kappa_\psi-1)\alpha t} \geq e^{(\alpha + \delta)t}$ for some positive $\delta$ and all $t$ sufficiently large.  Subdivide the interval into equally spaced intervals of length at most $e^{-\alpha t}/2$ and at least length $e^{-\alpha t}/3.$  Having a spacing larger than $e^{-\alpha t}$ implies one of these intervals has no points.  Applying a union bound, we get
\[
\Pr(R'_t > 0)
\leq
3e^{\alpha t}\exp(-e^{\delta t}/2)
\]

This shows that for some $\beta > 0$, for large $t$,
\[
\Pr(L_t > e^{-\alpha t}) = \Pr(R_t > 0) < e^{-\beta t}.
\]
The Borel-Cantelli lemma then implies that $L_n \le e^{-\alpha n}$ for large integers $n$ with probability one. Since $L_t$ is decreasing, this implies that almost surely, $L_t \le e^{\alpha(1-t)}$ for large $t$, which yields the lemma.
\end{proof}

\section{Entropy bounds}
\label{sec:entropy}

%For a distribution function $F,$ define an ``entropy''
For a distribution function $F,$ define 
\[
H(F) = \int_0^\infty (\log x) dF(x),
\]
 if the integral exists.
 Let $\tilde H_t = H(\Atilde[t])$ and $H_t = H(\A[t])$.

 \begin{remark}
	 If we expand this definition for $F= \diFtilde,$ we get
	 \[
		 H(\diFtilde)
		 =
		 \sum_{i=1}^{n+n_0} I_i^{(n)} \log(I_i^{(n)}).
	 \]
	 This gives $-H(\diFtilde)$ the interpretation as the entropy of the discrete distribution $\diFtilde.$
 \end{remark}
 
 %When $F = \A[t],$ this is exactly the entropy of the empirical size-biased interval measure. 

\begin{lemma}
We have the following identities for the evolution of the entropy.  For all $t \geq 0,$
\label{lem:H_evo}
\begin{align}
\tilde H_t &= \tilde H_0 + \sum_{(s,u,v)\in \Pi,\,s\le t} {\ell_s(u)}W(v),\\
\label{eq:Ht_def}
H_t &= \tilde H_t + t = H_0 + \sum_{(s,u,v)\in \Pi,\,s\le t} {\ell_s(u)}W(v) + t,
\end{align}
where $W(v) = v \log v + (1-v) \log (1-v).$
\end{lemma}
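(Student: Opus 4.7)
The plan is to compute the instantaneous change in $\tilde H_t$ at each jump of the Poisson point process $\Pi$, then combine with a change-of-variables argument to relate $H_t$ and $\tilde H_t$.

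First I would unpack the definitions. Since $\Atilde[t](x) = \sum_i I_i^{(t)}\one[I_i^{(t)} \le x]$ is a pure jump function, its Lebesgue--Stieltjes measure is $d\Atilde[t](x) = \sum_i I_i^{(t)}\delta_{I_i^{(t)}}(dx)$, so
\[
\tilde H_t = \sum_i I_i^{(t)} \log I_i^{(t)}.
\]
Between jumps of $\Pi$, the intervals do not change and $\tilde H_t$ is constant. At a jump time $s$ with marks $(u,v)$, exactly one interval, of length $\ell = \ell_s(u)$, is removed from the partition, and two new intervals of lengths $\ell v$ and $\ell(1-v)$ are added. The corresponding increment of $\tilde H$ is therefore
\[
\Delta \tilde H_s = \ell v\log(\ell v) + \ell(1-v)\log(\ell(1-v)) - \ell\log\ell = \ell\bigl[v\log v + (1-v)\log(1-v)\bigr] = \ell_s(u)\, W(v),
\]
where the $\log\ell$ contributions cancel thanks to $v+(1-v)=1$. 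Summing these increments over the jump times of $\Pi$ up to $t$, and noting that $\Pi$ has only finitely many points in $[0,t]\times [0,1]^2$ almost surely (its intensity there is $e^t\,dt\otimes d\Psi\otimes dv$, which is finite), yields the first identity.

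For the second identity, I would use the relation $\A[t](x) = \Atilde[t](e^{-t}x)$, which says that $d\A[t]$ is the pushforward of $d\Atilde[t]$ under the map $y\mapsto e^t y$. Hence
\[
H_t = \int_0^\infty \log(x)\, d\A[t](x) = \int_0^\infty \log(e^t y)\, d\Atilde[t](y) = t\int_0^\infty d\Atilde[t](y) + \tilde H_t.
\]
Since the intervals partition the unit circle, $\sum_i I_i^{(t)} = 1$, i.e.\ $\Atilde[t](+\infty) = 1$, so the total mass equals $1$ and we obtain $H_t = \tilde H_t + t$. Combined with the first identity this gives the expression for $H_t$ in \eqref{eq:Ht_def}.

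There is no real obstacle here beyond bookkeeping: the only thing to check carefully is that the sum over jumps makes sense pathwise, which follows from the local finiteness of $\Pi$ noted above together with the bound $|W(v)|\le \log 2$ and $\ell_s(u)\le 1$, ensuring that each summand is bounded and the partial sums are well defined.
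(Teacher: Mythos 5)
Your proof is correct and follows essentially the same route as the paper: you compute the jump increment of $\tilde H_t$ (the paper does this by integrating $\log x$ against the measure $B(s,u,v,dx)$, while you work directly with the interval lengths, but the computation is identical and gives $\ell_s(u)W(v)$), and you obtain the relation $H_t = \tilde H_t + t$ by the change of variables $y = e^{-t}x$ together with $\Atilde[t](+\infty)=1$, exactly as in the paper.
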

This observation is also used by Lootgieter~\cite{Lootgieter1977} and in Slud~\cite{Slud1978}, and it follows from a simple calculation, which we include for completeness.
\begin{proof}
Note that the identity for $H_t$ follows immediately from the identity for $\tilde H_t$ on making the change of variables $y=e^{-t}x,$ and so we turn to the first identity.
From~\eqref{eq:evolution}, we have
\[
\tilde H_t = \tilde H_0
+
\sum_{(s,u,v)\in \Pi,\,s\le t}
\int_0^\infty (\log x)\,B(s,u,v,dx),
\]
where $B(s,u,v,dx)$ is the Lebesgue--Stieltjes measure associated to the function $x\mapsto B(s,u,v,x)$. It thus suffices to establish that for all $(s,u,v)\in[0,\infty)\times[0,1]^2$,
\[
\int_0^\infty (\log x)\,B(s,u,v,dx)
=
{\ell_s(u)}W(v).
\]
We now have that
\[
B(s,u,v,dx) = \ell_s(u)
v( \delta_{\ell_s(u) v}(x) - \delta_{\ell_s(u)}(x))
+\ell_s(u)(1-v)( \delta_{\ell_s(u)(1-v)}(x) - \delta_{\ell_s(u)}(x)),
\]
and hence
\begin{align*}
\int_0^\infty (\log x) B(s,u,v,dx)
\hspace{-1.0in}&
\hspace{1.0in}\\
&=
\ell_s(u)v( \log {\ell_s(u) v}- \log{\ell_s(u)})
+\ell_s(u)(1-v)(\log{\ell_s(u)(1-v)} - \log{\ell_s(u)})\\
&=\ell_s(u)W(v).
\end{align*}
This proves the lemma.
\end{proof}

Using Lemma~\ref{lem:H_evo}, we now calculate the drift and quadratic variation of $(H_t)_{t \geq 0}.$ Recall that for a semimartingale $X$, the \emph{predictable quadratic variation process} $\langle X\rangle$ is defined to be the compensator of the quadratic variation process $[X] = X^2 - 2\int X_- dX$ \cite[pp.66,122]{Protter}.
\begin{lemma}
\label{lem:Ht}
The process $H=(H_t)_{t\ge0}$ solves the following stochastic differential equation:
\[
dH_t = (1 - D_t)dt + dM_t,\quad D_t = \frac 1 2 \int_0^\infty z \,d\Psi(\A[t](z)),
\]
where $M=(M_t)_{t\ge0}$ is a martingale whose predictable quadratic variation satisfies
\[
d\langle M \rangle_t \le L_t^2\,dt.
\] 
Furthermore, we have for every $s<t$, $H_t - H_s \le t-s$.
\end{lemma}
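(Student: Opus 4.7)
The plan is to read off this lemma directly from the jump representation in Lemma~\ref{lem:H_evo}, combined with the two moment calculations already performed in the paragraph preceding the statement. The only genuinely new ingredient is the deterministic a.s.\ upper bound at the end, which comes from a sign property of $W$.

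First I would write the sum in~\eqref{eq:Ht_def} as an integral against the Poisson random measure $\Pi$ on $[0,\infty)\times[0,1]^2$, i.e.
\[
 H_t - H_0 - t = \int_{[0,t]\times[0,1]^2}\ell_s(u)W(v)\,\Pi(ds,du,dv).
\]
Since $\Pi$ has $\filt$-predictable intensity $e^s ds\otimes d\Psi(u)\otimes dv$ and the integrand $\ell_s(u)W(v)$ is $\filt_{s-}$-measurable, standard compensation theory gives the decomposition into predictable drift plus a (local) martingale $M_t$. The drift is
\[
 \int_0^t e^s\!\!\iint \ell_s(u)W(v)\,d\Psi(u)\,dv\,ds,
\]
and the first display in the paragraph preceding the lemma identifies the inner double integral as $-\tfrac{1}{2}e^{-s}\int_0^\infty z\,d\Psi(\A[s-](z)) = -e^{-s}D_s$, so the drift equals $-\int_0^t D_s\,ds$. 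Adding back the deterministic $+t$ from~\eqref{eq:Ht_def} gives $dH_t = (1-D_t)\,dt + dM_t$, as claimed.

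Next I would compute the previsible quadratic variation. For a stochastic integral against a compensated Poisson random measure one has
\[
 d\langle M\rangle_t = e^t\!\!\iint (\ell_t(u)W(v))^2\,d\Psi(u)\,dv\,dt.
\]
Now I would use $\ell_t(u)^2\le L_t\,\ell_t(u)$, the bound $\int_0^1 W(v)^2\,dv\le 8/27$ established in the excerpt, and the identity $e^t\int_0^1\ell_t(u)\,d\Psi(u) = \int_0^\infty z\,d\Psi(\A[t-](z)) = 2D_t$ (from~\eqref{eq:f_ell} and the change of variables $y = e^{-t}x$) to conclude
\[
 d\langle M\rangle_t \;\le\; L_t\cdot 2D_t\cdot\tfrac{8}{27}\,dt \;\le\; L_tD_t\,dt.
\]
For the final deterministic inequality, the key observation is simply that $W(v)=v\log v+(1-v)\log(1-v)\le 0$ for every $v\in[0,1]$, so every jump of the sum in~\eqref{eq:Ht_def} is non-positive; thus $H_t - H_s \le (t-s) + 0$.

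The main technical subtlety I expect is the martingale (as opposed to local martingale) property of $M$. Strictly speaking, $\langle M\rangle_t$ is controlled by $\int_0^t L_sD_s\,ds$, which is random; however, $L_t\le 1$ and $D_t = \tfrac12\int z\,d\Psi(\A[t](z))\le \tfrac12$ deterministically, since $\Psi$ is a probability distribution function supported in $[0,1]$ and $\int z\,d\Psi(\A[t](z))\le \A[t](\infty)\le 1$. Hence $\langle M\rangle_t\le t/4$ is bounded on compacts, which upgrades $M$ from a local martingale to a true martingale. This point could be handled by a localizing sequence $\tau_K = \inf\{t: L_tD_t> K\}$ if one preferred to avoid invoking these uniform bounds, but either way the argument is standard and does not present a real obstacle.
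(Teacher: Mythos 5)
Your proof follows the same route as the paper: represent the jump sum from Lemma~\ref{lem:H_evo} as a Poisson stochastic integral, compensate, and invoke the first- and second-moment calculations done in the preceding paragraph of the paper. The paper in fact offers no further proof beyond those moment computations (``These calculations now imply the following result''), so your write-up supplies exactly the bookkeeping the paper leaves implicit, including the sign observation $W\le 0$ for the final bound $H_t-H_s\le t-s$.

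One genuine error in a supporting claim, though it does not sink the argument: you assert $D_t\le \tfrac12$ deterministically because ``$\int z\,d\Psi(\A[t](z))\le\A[t](\infty)\le 1$.'' That inequality is false; the pushforward measure $d\Psi(\A[t](z))$ has total mass at most $1$, but it lives on $[0,e^t]$, not $[0,1]$, since $\A[t](z)=\Atilde[t](e^{-t}z)$ only reaches $1$ at $z=e^t$. The correct bound, via the change of variables $\int z\,d\Psi(\A[t](z)) = e^t\int_0^1\ell_t(u)\,d\Psi(u)\le e^t L_t\le e^t$, gives $D_t\le\tfrac12 e^t$. This is enough for your intended conclusion: one still gets the deterministic bound $\langle M\rangle_t\le\int_0^t L_sD_s\,ds\le\tfrac12(e^t-1)$ on compacts, so $M$ is a true martingale and not merely a local one. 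You should replace the erroneous justification with this one, but the structure and all the other steps of your proof are sound and match the paper.
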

\begin{proof}
  We begin by calculating the drift of $H.$  
  %Note that $H_t$ is a stochastic integral of a progressively measurable function of $\Pi_t.$  
  For any $t \geq 0,$ let $N_t$ be the number of points of $\Pi$ that have arrived by time $t,$ so that $N$ is a Poisson process with intensity $e^t.$
  Let $t_1 > t_0 \geq 0$ be any times.  Then 
  \[
	  \Exp \left[ \tilde H_{t_1} - \tilde H_{t_0} \vert \filt_{t_0} \vee \sigma( (N_t)_{t_0 \leq t \leq t_1}) \right]
	  %= \sum_{\substack{(s,u,v)\in \Pi \\ t_0\leq s\leq t_1}}
	  =
	  \int_{t_0}^{t_1}
	  \int_0^1\int_0^1 \ell_{t}(u)W(v)\,d\Psi(u)\,dv\,dN_t.
  \]
By the change of variables formula \eqref{eq:f_ell}, we have for every $t\ge0$,
\begin{align*}
\int_0^1\int_0^1 \ell_t(u)W(v)\,d\Psi(u)dv
&=
\int_0^1\ell_t(u)\,d\Psi(u)
\int_0^1W(v)\,dv. \\
&= -\frac{1}{2}\int_0^1\ell_t(u)\,d\Psi(u) \\
&= -\frac{e^{-t}}{2}\int_0^\infty z \,d\Psi(\A[t-](z)).
\end{align*}
Hence, we have
\[
\Exp \left[ \tilde H_{t_1} - \tilde H_{t_0} \vert \filt_{t_0}\right]
=\Exp \left[
	-\frac{1}{2}\int_{t_0}^{t_1}e^{-t}
\int_0^\infty z \,d\Psi(\A[t-](z))\,dN_t
\,\biggl\vert\,\filt_{t_0}\right].
\]
From which it follows that $M',$ where 
\[
M'_{t} = \tilde H_t
	+\frac{1}{2}\int_{0}^{t}e^{-s}
\int_0^\infty z \,d\Psi(\A[s-](z))\,dN_s,
\]
is a martingale.

As $t \mapsto N_t - e^{t}$ is an $\filt$-adapted square-integrable martingale and $t \mapsto e^{-t}\int_0^\infty z \,d\Psi(\A[t-](z))$ is a non-negative predictable process bounded by 1 (again by \eqref{eq:f_ell} and the inequalities $0\leq \ell_t(u)\leq 1$), 
we conclude that its stochastic integral against $t \mapsto N_t - e^t$ has finite $\mathcal{H}^2$ norm (see \cite[pp. 155]{Protter}) and hence is a martingale.
%we conclude by the Kunita--Watanabe inequality that its stochastic integral against $t \mapsto N_t - e^{t}$ is also a martingale.
%Further as $-N_t \log 2 \leq \tilde H_t - \tilde H_0 \leq 0,$ we have that this stochastic integral is a martingale.  
Thus $M$, where
\[
M_{t} = \tilde H_t
+\frac{1}{2}\int_{0}^{t}
\int_0^\infty z \,d\Psi(\A[s-](z))\,ds,
\]
is a martingale.  As $\tilde H_t = H_t - t$ for all $t \geq 0$ and as the integral in the previous equation is indistinguishable from $\int_0^t D_s\,ds,$ we have shown that $dH_t = (1 - D_t)dt + dM_t.$

% To show that $M$ is in fact a martingale, it suffices to show that $\Exp \sup_{0 \leq s \leq t} |M_s| < \infty$ for all $t \geq 0.$  Note that $\tilde H$ is nonpositive and monotone decreasing, while the stochastic integral in $M$ is nonnegative and montone increasing.  Hence,
% \[
% \sup_{0 \leq s \leq t} |M_s|
% \leq -\tilde H_t + \frac{1}{2}
% \int_{0}^{t}
% \int_0^\infty z \,d\Psi(\A[s-](z))\,ds
% \]
% Further, by \eqref{eq:Ht_def}, $-\tilde H_t \leq N_t \log 2,$ so that it has finite expectation.  We need only bound the expectation of the stochastic integral to conclude that $M$ is a martingale. In fact, we still see that it is almost surely bounded; just apply \eqref{eq:f_ell} once more to get
% \begin{align*}
%   \int_{0}^{t}\int_0^\infty z \,d\Psi(\A[s-](z))\,ds
%   &=\int_{0}^{t}e^s\int_0^1 \ell_s(u)\,d\Psi(u)\,ds \\
%   &\leq \int_{0}^{t}e^s\int_0^1\,d\Psi(u)\,ds\\
%   &=e^t.
% \end{align*}

As for the quadratic variation, note that $[M] = [\tilde H]$ on account of their differing by a continuous process of finite variation.  Hence, we have the formula
\[
[M]_t
=
\sum_{(s,u,v)\in \Pi,\,s\le t} ({\ell_s(u)}W(v))^2,
\]
for all $t \geq 0.$  Identifying $\langle M\rangle$ can now be performed by the same sequence of steps performed to identify the drift of $\tilde H.$ 
%the only real difference being the different drift term.  
In this case, we get
\[
  \Exp \left[ 
    \left[M\right]_{t_1}
    -\left[M\right]_{t_0}
    \,\vert\,\filt_{t_0}
\right]
  =
  \Exp \left[ 
    \int_{t_0}^{t_1}\int_0^1\int_0^1 \left(\ell_t(u)W(v)\right)^2\,d\Psi(u)dv\,dN_t
    \,\biggl\vert\,\filt_{t_0}
  \right].
\]
We now estimate
\begin{align*}
\int_0^1\int_0^1 \left(\ell_t(u)W(v)\right)^2\,d\Psi(u)dv
&=
\int_0^1(\ell_t(u))^2\,d\Psi(u)
\int_0^1W(v)^2\,dv. \\
&\leq
L_t^2 \int_0^1W(v)^2\,dv.
\end{align*}
We bound the integral of $W(v)^2$ by 
\[
\int_0^1W(v)^2\,dv
\leq 4\int_0^1 v^2 (\log v)^2\,dv
= \frac{8}{27}.
\]
From these estimates and a series of arguments similar to those made for $\tilde H,$ it follows that for all $t \geq 0,$
\[
d\langle M\rangle_t \leq 
\frac{8}{27}
L_t^2\,dt \le L_t^2\,dt.
\]
\end{proof}

The next lemma tells us that $D$ is large as soon as $H$ is large, and hence $H$ experiences a negative drift when $H$ grows too large.

\begin{lemma}
\label{lem:DF}
Assume (D). Then there exists a constant $C$, such that for any probability distribution function $F$,
\[
H(F) > C\text{ implies }D(F) := \frac 1 2 \int_0^\infty x\,d\Psi(F(x)) > 2.
\]
\end{lemma}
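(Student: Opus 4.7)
The plan is to express both $D(F)$ and $H(F)$ as integrals involving the tail $1-F$ against different kernels, and then let assumption (D) convert between them.

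First, I would rewrite $D(F)$ in tail form. The standard identity $\int_0^\infty y\,d\mu(y) = \int_0^\infty \mu((y,\infty))\,dy$, applied to the probability measure $d\mu = d\Psi(F)$, gives $\int_0^\infty x\,d\Psi(F(x)) = \int_0^\infty (1-\Psi(F(x)))\,dx$. Since the integrand is non-increasing in $x$, a one-point lower bound yields, for every $A\ge 0$,
\[
D(F) \;\ge\; \tfrac{A}{2}\bigl(1-\Psi(F(A))\bigr) \;\ge\; \tfrac{c}{2}A(1-F(A))^{\kappa_\Psi},
\]
the second inequality by assumption (D). So it suffices to produce some $A\ge 1$ with $A(1-F(A))^{\kappa_\Psi}>4/c$.

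Second, I would rewrite $H(F)$ in tail form. Since $\int_0^1\log x\,dF(x)\le 0$, the hypothesis $H(F)>C$ (with $C\ge 0$) forces $\int_1^\infty\log x\,dF(x)>C$. Writing $\log x = \int_1^x y^{-1}\,dy$ for $x\ge 1$ and applying Fubini--Tonelli to the non-negative integrand gives the key identity
\[
\int_1^\infty\log x\,dF(x) \;=\; \int_1^\infty \frac{1-F(y)}{y}\,dy,
\]
whence $\int_1^\infty (1-F(y))/y\,dy>C$.

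Third, I would argue by contraposition. Suppose $A(1-F(A))^{\kappa_\Psi}\le K$ for every $A\ge 1$, i.e.\ $1-F(A)\le (K/A)^{1/\kappa_\Psi}$. Then
\[
\int_1^\infty \frac{1-F(y)}{y}\,dy \;\le\; K^{1/\kappa_\Psi}\int_1^\infty y^{-1-1/\kappa_\Psi}\,dy \;=\; \kappa_\Psi K^{1/\kappa_\Psi}.
\]
Setting $K=4/c$, this upper bound equals $\kappa_\Psi(4/c)^{1/\kappa_\Psi}$. Therefore any $C$ strictly greater than $\kappa_\Psi(4/c)^{1/\kappa_\Psi}$ (a quantity depending only on $c$ and $\kappa_\Psi$ from assumption (D)) guarantees an $A\ge 1$ with $A(1-F(A))^{\kappa_\Psi}>4/c$, and step one then gives $D(F)>2$.

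There is no real technical obstacle here; the conceptual content lies in recognizing that $D(F)$ and $H(F)$ both admit natural tail representations that can be compared through (D). The only minor care needed is the contrapositive in the last step, which lets me extract a single $A$ rather than only control a supremum.
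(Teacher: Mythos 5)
Your proof is correct and follows essentially the same strategy as the paper: both derive tail representations $D(F)=\tfrac12\int_0^\infty(1-\Psi(F(x)))\,dx$ and $H(F)\le\int_1^\infty (1-F(y))/y\,dy$, use the monotonicity of $1-\Psi(F(x))$ for a one-point lower bound on $D(F)$, and then argue by contraposition that a bounded $D(F)$ forces a polynomial decay of $1-F$ which caps $H(F)$. The only (cosmetic) difference is that the paper introduces an auxiliary exponent $\alpha=(2\kappa_\Psi)^{-1}$ and threshold $x_0$ to split the two cases, whereas you go directly to the threshold $4/c$, yielding the slightly cleaner explicit constant $C=\kappa_\Psi(4/c)^{1/\kappa_\Psi}$.
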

\begin{proof}
We first note that by integration by parts, we have
\begin{equation}
\label{eq:1313}
D(F) =
\frac 1 2 \int_0^\infty [1-\Psi(F(x))]\,dx,
\end{equation}
as well as
\begin{equation}
\label{eq:1314}
H(F) = -\int_0^1 \frac 1 x F(x)\,dx + \int_1^\infty \frac 1 x (1-F(x))\,dx \le \int_1^\infty \frac 1 x (1-F(x))\,dx.
\end{equation}
Now fix $x_0>1$. Note that for some $c'>0$, $1-\Psi(u) \ge 2c'(1-u)^{\kappa_\Psi}$ by assumption. Now set $\alpha := (2\kappa_\Psi)^{-1}$. If $1-F(x) > x^{-\alpha}$ for some $x> x_0$, then by \eqref{eq:1313} and the fact that $1-\Psi(F(x))$ is decreasing in $x$, $D(F) > c'x^{1/2} > c'x_0^{1/2}$. On the other hand,  if $1-F(x) \le x^{-\alpha}$ for all $x > x_0$, then  $H(F) \le x_0 + \alpha^{-1}$ by \eqref{eq:1314}. Hence, $H(F) > x_0+\alpha^{-1}$ implies $D(F) > c'x_0^{1/2}$. Choosing $x_0$ large enough and setting $C=x_0+\alpha^{-1}$ finishes the proof of the lemma.
\end{proof}

As a consequence of the negative drift and the decay of the quadratic variation, we have that $H$ is stochastically bounded for all time.

\begin{proposition}
\label{prop:entropy}
Assume (D). There exists a constant $C'$ such that
\[
\Pr(\exists T: H_t \le C'\,\forall t\ge T) = 1.
\]
\end{proposition}

\begin{proof}%[Proof of Proposition~\ref{prop:entropy}]
Fix $\alpha \in(0,(\kappa_\Psi+1)^{-1})$ and $\beta>0$ and define $S_\beta$ to be the first time $t \geq \beta$, such that $L_t \ge e^{-\alpha t}$. Define the process  $H^\beta = (H^\beta_t)_{t\ge0}$ by
\(H_{t}^\beta = H_{(t+\beta) \wedge S_{\beta}}\).

We will show that there is a constant $C' >0$ so that for
every $\beta >0,$ with probability $1,$ $\limsup_{t \to \infty}~H^\beta_t~\leq C'.$  As a consequence the same statement holds with probability $1$ jointly for all $\beta \in \mathbb{N}.$  By Lemma~\ref{lem:max_non_optimal}, there is  with probability $1$ some $\beta^* \in \mathbb{N}$ so that $S_{\beta^*} = \infty,$ and hence we have that with probability $1,$ $\limsup_{t \to \infty } H_t \leq C',$ from which the proposition follows.
%With this definition, M' can not be a martingale
%\[
%H_t^T =
%\begin{cases}
%H_t, & t < T\\
%0, & t\ge T
%\end{cases}
%\]
%By Lemma \ref{lem:max_non_optimal}, it is enough to show that there exists a constant $C'$, such that $\Pr(\exists t>t_0: H_t^\beta > C')$ can be made arbitrarily small for large $t_0$.

Let $C$ be the constant from Lemma~\ref{lem:DF}. We call an \emph{excursion} of the process $H^\beta$ a time interval $[t_1,t_2]$, such that $H^\beta_{t_1} \ge C+1$, and $t_2$ is the first time after $t_1$ that $H^\beta_{t_2} \le C$. We say that the excursion is \emph{successful}, if $H^\beta_t \ge C+2$ for some $t\in[t_1,t_2]$ and \emph{unsuccessful} otherwise. We further say that the process \emph{goes on an excursion} at the time $t$, if $t$ is the first time after the end of the last excursion that $H^\beta_t\ge C+1$.

Note that while the process $H^\beta$ is on an excursion, it has a drift $\le -1$ by Lemmas~\ref{lem:Ht} and \ref{lem:DF}. Furthermore, its jumps are bounded by 1 by definition \eqref{eq:Ht_def}. Standard calculations involving the optional stopping theorem now show that the excursion is finite almost surely. In order to prove the proposition, it is therefore enough to show that the number of successful excursions is finite almost surely. For this, denote by $T_1<T_2<\ldots$ the times at which the process goes on an excursion. By the last statement of Lemma~\ref{lem:Ht}, we have $T_{n+1} - T_n > 1$ for every $n$, whence $T_n > n$ for every $n$. Furthermore, denote
\[
P_n = \Pr(\text{the excursion starting at $T_n$ is successful}\,|\,\filt_{T_n}).
\]
By the Borel--Cantelli lemma, it is then enough to show that the sequence $P_n$ is summable almost surely.

For this, we first note that $H^\beta$ has no positive jumps, whence $H^\beta_{T_n} = C+1$ for every $n>1$. 
Fix $n \geq 1$ and let $\tau$ be the first time $t \geq 0$ that $H^\beta_{T_n + t} = C+2$ or $H^{\beta}_{T_n +t} \le C.$ 
Now define the process \( G_t = H^\beta_{T_n + (t\wedge \tau)}. \)
Note that it is possible that $S_\beta$ occurs strictly before $T_n + \tau,$ in which case $\tau = \infty$ and $G_t$ never reaches $C+2$ or $C.$

%Recall that $H^\beta_t = 0$ for all $t \geq \beta,$ and therefore, $T_n + Q_\tau \le \beta.$
By Lemma~\ref{lem:Ht}, $G_t$ then satisfies by Lemma~\ref{lem:DF} that for $t< \tau$,
\[
  dG_t = (1 - D_{T_n+t})dt + dM_t \leq -dt + dM_t
\]
with a martingale $(M_t)_{t\ge0}.$ Its predictable quadratic variation satisfies for all $t \leq \tau,$
\[
  \langle M \rangle_t
  \leq \int_0^{t}
  L_{T_n +s}^2\,ds
  \leq \int_0^t e^{-2\alpha (T_n+s)}\,ds
  \leq \frac{1}{2\alpha} e^{-2\alpha n}.
\]
As $G_t$ is frozen for $t \geq \tau,$ we have in fact that $\langle M \rangle_t \leq \frac{1}{2\alpha} e^{-2\alpha n}$ for all $t \geq 0.$

Using that $G_t \leq M_t$ for all $t \geq 0,$ we have by Doob's $L^2$-martingale inequality that 
\[
\Pr( \sup_{t \geq 0} G_t \geq C+2)
\leq \Pr( \sup_{t \geq 0} M_t \geq 1)
\leq \sup_{t \geq 0} \Exp \langle M \rangle_t
\leq \frac{1}{2\alpha}e^{-2\alpha n}.
\]
This shows that $P_n < \frac{1}{2\alpha} e^{-2\alpha n}$ for every $n$. This sequence is summable, and the above arguments now permit us to conclude that the number of successful excursions is finite almost surely.

%after adjusting constants.  %As with probability $1,$ $H_t \leq H_0 + t$ for all $t \geq 0,$ we have that $\sup_{t \geq 0} H_t < \infty$ almost surely.
\end{proof}

\section{Convergence of the stochastic evolution}
\label{sec:convergence}
The goal of this section is to prove the following theorem.

\begin{theorem}
\label{thm:main}
Assume (C) and (D). Then $\FPSpace$ is nonempty. Furthermore, let $F^\Psi$ be the distribution function of Lemma~\ref{lem:unique_F*}. Then almost surely, as $t\to\infty$, $\A[t]\to F^\Psi$ pointwise.
\end{theorem}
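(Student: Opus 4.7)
The plan is to implement a Kushner--Clark type argument in the infinite-dimensional space $\DTSpace$, exploiting the uniqueness of Lemma~\ref{lem:unique_F*} (which rests on Proposition~\ref{prop:candy}) to identify the almost sure limit. The strategy has four steps: (i) show the noise $\M$ vanishes in an appropriate sense, (ii) establish almost sure precompactness of the time-shifted evolutions $\A[t]^{(n)} := \A[n+t]$ in $\DTSpace$, (iii) show every subsequential limit lies in $\FPSpace$, and (iv) combine with Lemma~\ref{lem:unique_F*} to identify the limit as $F^\Psi$.

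For step~(i), the semimartingale $\M[t]$ has jumps at Poisson points $(s,u,v)\in\Pi$ bounded by $\ell_s(u)\le L_s$, with compensator rate $e^s\,d\Psi(u)\otimes dv$. Combining Doob's $L^2$-inequality with the exponential decay $L_s\le e^{-\alpha s}$ from Lemma~\ref{lem:max_non_optimal} and the change of variable $\M[t](x)=\Mtilde[t](e^{-t}x)$, the goal is to prove that for each compact $K\subset[0,\infty)$ and each $T>0$,
\[
\sup_{0\le s\le T}\int_K |\M[n+s](x) - T_s\M[n](x)|\,dx \to 0 \text{ a.s.\ as } n\to\infty.
\]
For step~(ii), tightness of $\{\A[t]\}_{t\ge0}$ as a family of sub-probability measures follows from Proposition~\ref{prop:entropy} (right tail), the unit-mean constraint (automatic from $\A[t]\in\DSpace_1$), and Lemma~\ref{lem:max_non_optimal} (jump sizes). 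Equicontinuity in $t$ is obtained from the identity $\A=\PDEOP(\A)+\M$: the $\PDEOP$-part is absolutely continuous in $t$ with a locally bounded $L^1_x$-modulus of continuity inherited from the structure of the time integral, and the noise is negligible by step~(i).

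For step~(iii), I would apply Lemma~\ref{lem:composition} to $\A=\PDEOP(\A)+\M$ to obtain $\A^{(n)}=\PDEOP(\A^{(n)})+\G^{(n)}$ with $\G[t]^{(n)}=\M[n+t]-T_t\M[n]\to 0$ by step~(i). The continuity of $\PDEOP$ (Lemma~\ref{lem:PDE_continuity}) and closedness of $\DTSpace_1$ imply that any $\DTSpace$-subsequential limit $\F^\infty$ of $(\A^{(n_k)})$ satisfies $\F^\infty=\PDEOP(\F^\infty)$; tightness of $\{\F[t]^\infty\}_{t\ge0}$ is inherited from that of $\{\A[s]\}_{s\ge0}$, and $\F[t]^\infty(+\infty)=1$ survives in the limit by conservation of total mass, placing $\F^\infty\in\FPSpace$ and in particular proving $\FPSpace\neq\emptyset$. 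For step~(iv), given any $t_k\to\infty$ and $S>0$, I would shift by $t_k-S$ and extract a $\DTSpace$-subsequential limit $\F^\infty\in\FPSpace$; Lemma~\ref{lem:unique_F*} then yields $\candy[\F[S]^\infty-F^\Psi]\le 2e^{-S}$. Since $\F[S]^\infty$ is an $\Lloc$-limit of $\A[t_k]$ along the subsequence and $S$ is arbitrary, a diagonal argument together with the continuity of $F^\Psi$ (Lemma~\ref{lem:unique_F*}) upgrades this to pointwise convergence $\A[t_k](x)\to F^\Psi(x)$ for every $x\ge 0$, establishing the theorem.

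The hardest step will be (ii), specifically the equicontinuity in $t$. The noise $\M$ is only easily controlled in $L^2$-type norms, whereas the $\DTSpace$-topology demands uniform-in-time $L^1_x$ control on compact $x$-sets; one cannot work with the naïve differential form of the dynamics (which would contain the non-smooth transport term $-x\D[x]\A[t]$) and must instead exploit the time-integrated $\PDEOP$-formulation, in which this transport is absorbed into a regularizing time-integral, to extract the needed modulus of continuity.
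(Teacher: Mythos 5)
Your overall strategy matches the paper's: establish almost-sure precompactness of the shifted processes $\A^{(n)}$ (tightness from the entropy bound in Proposition~\ref{prop:entropy}, asymptotic equicontinuity, and vanishing noise $\M^{(n)}\Dto 0$), show via Lemma~\ref{lem:composition} and the continuity of $\PDEOP$ (Lemma~\ref{lem:PDE_continuity}) that every subsequential limit lies in $\FPSpace$, then use Lemma~\ref{lem:unique_F*} with a shift-by-$S$ argument to identify the unique limit as $F^\Psi$ and upgrade to pointwise convergence by the continuity of $F^\Psi$. This is exactly the Kushner--Clark scheme the paper implements, and it is correct.

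The one step where you diverge is asymptotic equicontinuity (your step~(ii)). You propose reading it off from $\A=\PDEOP(\A)+\M$, treating the $\PDEOP$-part as time-regular and the noise as negligible. The paper instead proves equicontinuity directly (Lemma~\ref{lem:AEC}) by exploiting the monotone jump structure: since $\Atilde$ is nondecreasing in both $t$ and $x$, the quantity $\int_0^\infty x^{-2}|\A[t+\delta](x)-\A[t](x)|\,dx$ is computed \emph{exactly} in terms of $e^{-t}N_{t+\delta}(1-e^{-\delta})$ and $e^{-t}(N_{t+\delta}-N_t)$, and a law-of-large-numbers estimate on the Poisson counts (Lemma~\ref{lem:AECsupport}) finishes. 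This route needs neither continuity of $\Psi$ nor any control of $\M$, and works directly in the strong $\candy$-norm rather than only local $L^1$. Your route would also close, but your heuristic that the transport is ``absorbed into a regularizing time-integral'' is not accurate: the term $T_t\F[0]^{(n)}$ in $\PDEOP(\F^{(n)})_t$ is a bare transport, not smoothed by integration in $t$. What actually makes this term Lipschitz in $t$ in local $L^1_x$ is just that $\F[0]^{(n)}\in\DSpace$ is bounded by $1$ and monotone, so the endpoint terms in $e^s\int_0^{ke^{-s}}F-e^t\int_0^{ke^{-t}}F$ are $O(|s-t|)$ uniformly over $\DSpace$. You should state that directly rather than appeal to a regularization that is not there. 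One further small imprecision in step~(ii): membership in $\DSpace_1$ only yields $\candy[\A[t]]\le 1$; the exact unit mass of $\tfrac1x\,d\A[t](x)$ is not automatic but comes from $\candy[\A[t]]=e^{-t}N_t\to1$ (Lemma~\ref{lem:candy_bias}), which the paper records as item~(4) of Proposition~\ref{prop:evo_prop}.
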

As mentioned in the introduction, we will prove the theorem in a
manner that mirrors analogous methodology developed by Kushner and Clark \cite{Kushner1978} to handle the case of ODE. This relies heavily on compactness arguments for function spaces.

Say that a family $\{\F^{(n)}\}_{n \in \N}$ of functions in $\DTSpace$ is \emph{asymptotically equicontinuous} if for any compact $K \subset [0,\infty),$
\[
\lim_{\delta \to 0}
\lim_{n \to \infty}
\sup_{\substack{s,t\geq0 \\ |s-t|\leq \delta}}
\int\limits_K \left| \F[s]^{(n)}(x) - \F[t]^{(n)}(x) \right|\,dx
=0.
\]

To apply the argument we will establish the following properties of the stochastic evolution.
\begin{proposition}
\label{prop:evo_prop}
Assume (D). For the stochastic evolution $\A,$ the following hold almost surely:
\begin{enumerate}
\item The collection of distribution functions $\{\A[t]\}_{t \geq 0}$ is tight.
\item The family $\{\A^{(n)}\}_{n \geq 0}$ defined by $\A[t]^{(n)} = \A[t+n]$ for every $t\ge0$ is {asymptotically equicontinuous}.
%    , i.e. for any $K$ compact
%\[
%\lim_{\delta \to 0}
%\limsup_{t \to \infty}
%\int\limits_K \left| \A[t+\delta}(x) - F_{t](x) \right|\,dx
%=0.
%\]
\item The noise vanishes in the limit, i.e.\ $\M^{(n)} \Dto 0$ as $n \to \infty$, where $\M[t]^{(n)} = \M[t+n] - T_t \M[n]$ for every $t\ge0$.
\item Almost surely, $\int_0^\infty x^{-2}\A[t](x)\,dx \to 1$, as $t\to\infty$.
\end{enumerate}
\end{proposition}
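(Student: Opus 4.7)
My plan is to prove the four claims in the order (4), (1), (3), (2), since (2) will depend on (3). Part (4) is a direct computation: the substitution $y=e^{-t}x$ yields
\[
\candy[\A[t]] = e^{-t}\int_0^\infty y^{-2}\Atilde[t](y)\,dy = e^{-t}\sum_i I_i^{(n)}\int_{I_i^{(n)}}^\infty y^{-2}\,dy = e^{-t}N_t,
\]
where $N_t$ denotes the number of intervals at time $t$; since $N_t-n_0$ is $\Poisson(e^t-1)$, the strong law of large numbers gives $e^{-t}N_t\to 1$ almost surely. For (1), I would use Proposition~\ref{prop:entropy}: since $\A[t]$ is a probability cdf and $x\log x\ge -1/e$ on $[0,1]$, splitting the entropy integral at $N\ge 1$ yields $H_t\ge -1/e+(\log N)(1-\A[t](N))$, so $1-\A[t](N)\le (H_t+1/e)/\log N$. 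Combined with $H_t\le C$ eventually a.s.\ and the trivial support bound $\A[t](x)=1$ for $x\ge e^t$ (for $t$ in a bounded set), this gives tightness.

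The technical core is (3). For each fixed $y$, $(\Mtilde[s](y))_{s\ge 0}$ is a martingale whose quadratic variation satisfies
\[
d\langle\Mtilde(\cdot,y)\rangle_s \le e^s\iint B(s,u,v,y)^2\,d\Psi(u)\,dv\,ds \le e^s L_s y\,ds,
\]
using $B^2\le L_s B$ together with~\eqref{eq:B_exp}. Fixing $\alpha\in(0,(\kappa_\Psi+1)^{-1})$, Lemma~\ref{lem:max_non_optimal} furnishes an a.s.\ finite $T_0$ with $L_s\le e^{-\alpha s}$ for $s\ge T_0$; then for $n\ge T_0$ the quadratic-variation increment on $[n,n+T]$ is at most a constant times $y e^{(1-\alpha)(n+T)}$. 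Starting from the identity $\M[t]^{(n)}(x) = (\Mtilde[t+n]-\Mtilde[n])(e^{-(t+n)}x)$, applying Doob's $L^2$ inequality to the martingale $(\Mtilde[s](y)-\Mtilde[n](y))_{s\ge n}$ for each fixed $y$, then using $\sup_t\int\le\int\sup_t$ and changing variables, I expect to obtain
\[
\Exp\Bigl[\sup_{t\le T}\int_0^K(\M[t]^{(n)}(x))^2\,dx\Bigr] \le C_{K,T,\alpha}\,e^{-\alpha n}.
\]
Cauchy--Schwarz, Markov, and Borel--Cantelli along integer $n$, combined with the elementary bound $\sup_{t\le T}\int_0^K|\M[t]^{(n)}|\,dx \le (1+e^T)\sup_{s\le T+1}\int_0^K|\M[s]^{(\lfloor n\rfloor)}|\,dx$ to extend to real $n$, then yield $\M^{(n)}\Dto 0$ almost surely.

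Finally, (2) reduces to (3) via the integrated evolution between times $s+n$ and $t+n$: for $0\le s\le t\le T$ with $t-s\le\delta$,
\[
\A[t+n]-\A[s+n] = \bigl[T_{t-s}\A[s+n]-\A[s+n]\bigr] + \int_0^{t-s}T_{t-s-r}\mathcal A\A[r+s+n]\,dr + \M[t-s]^{(s+n)}.
\]
The rescaling integrates on $[0,K]$ to at most $K(1-e^{-\delta})\le K\delta$ by a monotonicity and change-of-variables argument, the drift to at most $(t-s)K^2/2$ using $\mathcal A F(x)\le x$, and the martingale piece is controlled through the identity $\M[t-s]^{(s+n)} = \M[t]^{(n)} - T_{t-s}\M[s]^{(n)}$ and part (3). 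Sending $n\to\infty$ first and then $\delta\to 0$ closes the argument. The hardest step will be (3): $(\M[t]^{(n)})_{t\ge 0}$ is not a martingale in $t$ because the change of frame $y=e^{-(t+n)}x$ moves with $t$, so uniform-in-$t$ almost sure control requires the careful interchange of the temporal supremum with the spatial integration described above, made possible by the a.s.\ decay $L_s\le e^{-\alpha s}$ from Lemma~\ref{lem:max_non_optimal}.
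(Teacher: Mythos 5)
Your parts (4) and (1) are essentially the paper's arguments: (4) is exactly Lemma~\ref{lem:candy_bias}, and (1) is the tightness-from-entropy argument the paper has in mind, though your constant $-1/e$ should really be $-e^{-t}N_t/e$ (the non-size-biased mass $\int_0^\infty x^{-1}d\A[t]$ equals $e^{-t}N_t$, not $1$), which is eventually $\ge -2/e$ almost surely, so this is a harmless slip. Parts (3) and (2) are where you diverge from the paper, and (3) has a genuine gap. Your plan is to bound $B^2\le L_sB$, getting $d\langle\Mtilde(y)\rangle_s\le e^sL_sy\,ds$, and then feed the almost-sure decay $L_s\le e^{-\alpha s}$ from Lemma~\ref{lem:max_non_optimal} into a Doob estimate in expectation. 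But $L_s\le e^{-\alpha s}$ only holds past a \emph{random} time $T_0$; it is not an expectation bound, so the step $\Exp[\sup_{t\le T}\int_0^K(\M[t]^{(n)})^2\,dx]\le Ce^{-\alpha n}$ does not follow as written. You would need to first stop the martingales at the first time $L_s$ exceeds $e^{-\alpha s}$, obtain the moment bound for the stopped processes, and argue separately (as is done in the proof of Proposition~\ref{prop:entropy}) that the stopping eventually never occurs. That is repairable, but it is precisely the complication the paper avoids by a different choice of norm: define $J_t=\int_0^\infty x^{-3}(\Mtilde[t](x))^2\,dx$, which is a submartingale, and compute the second moment of $B$ directly (Lemma~\ref{lem:qv}) to obtain $\langle\Mtilde(y)\rangle_t\le\frac{2y^3}{3}\int_0^te^s\int_y^\infty z^{-1}d\Psi(\Atilde[s](z))\,ds$; the factor $y^3$ cancels the weight, giving the \emph{deterministic} bound $\Exp J_t\le\frac23 e^t$ with no reference to $L_s$ whatsoever, after which Doob and Borel--Cantelli give $I_t=e^{-2t}J_t\to0$ a.s.\ directly.

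For (2), your decomposition of $\A[t+n]-\A[s+n]$ into rescaling, drift, and noise is correct (the identity $\M[t-s]^{(s+n)}=\M[t]^{(n)}-T_{t-s}\M[s]^{(n)}$ checks out, and the bounds $\mathcal AF(x)\le x$ and $\int_0^K|T_\delta F-F|\le K(1-e^{-\delta})$ are both right), and it does close the argument. But it makes (2) depend on (3), which is unnecessary. The paper's Lemma~\ref{lem:AEC} is purely monotone and self-contained: since $\Atilde[t+\delta](e^{-\delta}x)\le\Atilde[t+\delta](x)$ and $\Atilde[t+\delta](x)\ge\Atilde[t](x)$,
\[
\candy[\A[t+\delta]-\A[t]]\le e^{-t}N_{t+\delta}(1-e^{-\delta})+e^{-t}(N_{t+\delta}-N_t),
\]
both terms being explicit Poisson counts via Lemma~\ref{lem:candy_bias}, and the result follows from the law of large numbers for the Poisson process alone. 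Your route trades this two-line computation for a three-term estimate plus a dependency on the noise bound; both are valid, but the paper's is the more elementary and keeps (1)--(4) logically independent of one another.
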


Each of these claims are proven separately.  For convenience, we list where each piece is proven.  The tightness follows from the almost sure boundedness of entropy established by Proposition~\ref{prop:entropy}.  Asymptotic equicontinuity is proven in Lemma~\ref{lem:AEC}. The vanishing of the noise is proven in Lemma~\ref{lem:M_decay}. Finally, the convergence of the integrals follows from Lemma~\ref{lem:candy_bias}. We remark that assumption (D) is only used to establish the tightness claim.

Let us show how Proposition~\ref{prop:evo_prop} implies the Theorem~\ref{thm:main}.
We rely on the following consequence of Arzel\`{a}-Ascoli.
\begin{lemma}
\label{lem:AA}
Suppose that $\{\F^{(n)}\}_{n \in \N}$ is any family from $\DTSpace$ that is asymptotically equicontinuous, so that $\F^{(n)}$ is a step function when restricted to finite intervals,
and so that the entire collection
\(
\{ \F[t]^{(n)} \}_{t \in \R,n \in \N}
\)
is tight.  Then the family $\{ \F^{(n)} \}_{ n \in \N}$ is precompact and all its limit points $\F^{(\infty)}$ are in $C([0,\infty),\DSpace)$ and have that $\F[t]^{(\infty)}$ is a distribution function for each $t \geq 0.$
\end{lemma}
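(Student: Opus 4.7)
The plan is a standard Arzel\`a--Ascoli-type argument tailored to the space $B([0,T], \DSpace)$. The two ingredients at our disposal are pointwise-in-$t$ compactness (since $(\DSpace, \dblah)$ is compact by Helly's theorem, as recalled in Section~\ref{sec:definitions}) and temporal equicontinuity of the limit (via the asymptotic equicontinuity assumption); tightness will be used at the end to promote limits from subdistributions to genuine probability distribution functions.

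I would first fix a horizon $T>0$ and a countable dense set $D \subset [0,T]$. Applying $\dblah$-compactness of $\DSpace$ at each $t\in D$ together with a Cantor diagonal extraction, I extract a subsequence $(n_k)$ along which $\F[t]^{(n_k)} \to \F[t]^{(\infty)}$ in $\dblah$ for every $t\in D$, with $\F[t]^{(\infty)} \in \DSpace$. Next, I upgrade asymptotic equicontinuity to equicontinuity of the partial limit on $D$. Given $\epsilon>0$, choose $K\in\N$ with $2^{-K+1}<\epsilon/2$; asymptotic equicontinuity on $[0,K]$ with horizon $T$ yields $\delta>0$ and $N$ such that $\int_0^K |\F[s]^{(n)}(x) - \F[t]^{(n)}(x)|\,dx < \epsilon/(2K)$ for $n\geq N$ and $|s-t|\leq\delta$ in $[0,T]$. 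Because elements of $\DSpace$ are uniformly bounded in $[0,1]$, $\dblah$-convergence on $\DSpace$ implies $L^1([0,K])$-convergence via bounded convergence, so this estimate passes to the limit $k\to\infty$ for $s,t\in D$. Combined with an inequality of the form $\dblah(f,g)\leq 2^{-K}+K\int_0^K|f-g|\,dx$ (in the spirit of \eqref{eq:dblah_domination}), this gives $\dblah(\F[s]^{(\infty)}, \F[t]^{(\infty)}) < \epsilon$ for $s,t\in D$ with $|s-t|\leq\delta$. Since $(\DSpace, \dblah)$ is complete, this uniformly continuous partial map extends to a continuous map $\F[\cdot]^{(\infty)}: [0,T] \to \DSpace$.

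Locally uniform convergence $\F^{(n_k)} \Dto \F^{(\infty)}$ on $[0,T]$ is then obtained via a three-term triangle inequality: for arbitrary $t\in[0,T]$, inserting a nearby $t'\in D$ gives
\[
\dblah(\F[t]^{(n_k)}, \F[t]^{(\infty)}) \leq \dblah(\F[t]^{(n_k)}, \F[t']^{(n_k)}) + \dblah(\F[t']^{(n_k)}, \F[t']^{(\infty)}) + \dblah(\F[t']^{(\infty)}, \F[t]^{(\infty)}),
\]
whose first and third terms are controlled uniformly in $t$ by asymptotic equicontinuity and by the continuity of the limit, respectively, and whose middle term vanishes along the diagonal subsequence. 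Uniform tightness of $\{\F[s]^{(n)}\}_{s,n}$ then promotes the limit from subdistribution to distribution: fixing $N$ so that $\F[s]^{(n)}(N) > 1-\epsilon$ uniformly, the inequality survives passage to the limit, so $\F[t]^{(\infty)}(\infty)=1$ for all $t$. One further diagonal extraction over $T\uparrow\infty$ produces $\F^{(\infty)} \in C([0,\infty), \DSpace)$.

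The only technical subtlety worth flagging is the limit exchange in the second paragraph: the asymptotic equicontinuity bound cannot be passed through $n_k\to\infty$ directly in the $\dblah$ metric, and must first be upgraded to an $L^1([0,K])$-bound; this is precisely where the uniform boundedness of elements of $\DSpace$ in $[0,1]$ becomes essential. The step-function hypothesis on $\F^{(n)}$ does not appear to play any role in the argument, and presumably just reflects the form in which the lemma will be invoked in Section~\ref{sec:convergence}.
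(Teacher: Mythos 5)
Your proof is correct, and it takes a genuinely different route from the paper's. The paper exploits the step-function hypothesis to build a piecewise linear interpolant $\Fhat^{(n)}$ whose equicontinuity follows from the asymptotic equicontinuity of $\F^{(n)}$ by convexity, and then invokes the classical Arzel\`a--Ascoli theorem for $C([0,T],\DSpace)$ with $\DSpace$ compact, followed by a diagonalization in $T$. Your argument instead proves the compactness ``by hand'': a Cantor diagonalization over a countable dense set $D\subset[0,T]$ of times, using pointwise compactness of $\DSpace$, followed by upgrading asymptotic equicontinuity to uniform continuity of the partial limit on $D$ and extending. The subtlety you flag --- that the $L^1([0,K])$ estimate from asymptotic equicontinuity cannot be read directly in $\dblah$ but must be transported through the limit via bounded convergence (using that $\DSpace$-elements are $[0,1]$-valued and increasing, hence converge a.e.) --- is handled correctly, and the passage back to $\dblah$ via the inequality $\dblah(f,g)\le 2^{-K}+K\int_0^K|f-g|\,dx$ is valid. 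Your approach is more elementary and, as you correctly observe, does not require the step-function hypothesis at all; the paper's Remark~\ref{rem:equicontinuity} confirms that this hypothesis is dispensable and only serves to reduce cleanly to the standard Arzel\`a--Ascoli statement. The paper's approach buys a shorter proof at the cost of the interpolation device; yours buys a self-contained argument that clarifies precisely which hypotheses are actually in play.
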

\begin{remark}
\label{rem:equicontinuity}
Since $\DTSpace$ is a metric space, precompactness in $\DTSpace$ is equivalent to existence of convergent subsequences. Also note that this lemma is still correct without the additional assumption that $\F^{(n)}$ be a step function.  We use this assumption simply to reduce the lemma to the standard Arzel\`{a}-Ascoli theorem.
\end{remark}
\begin{proof}
As $\F^{(n)}$ is a step function, we may define a piecewise linear interpolation $\Fhat^{(n)}.$  For any pair $(t_1,t_2)$ of consecutive jumps, we define
\[
\Fhat[t]^{(n)} = \frac{t-t_1}{t_2-t_1} \left[ \F[t_2]^{(n)} - \F[t_1]^{(n)} \right] + \F[t_1]^{(n)}
\]
on the interval $[t_1,t_2].$  Note this definition makes $\Fhat[t]^{(n)}$ equal to $\F[t]^{(n)}$ at jumps.

Further, from the convexity of the integral of the norm, we have for every compact $K\subset [0,\infty)$ and every $T\ge0,\delta >0$,
\[
\sup_{\substack{T \geq s,t\geq0 \\ |s-t|\leq \delta}}
\int\limits_K \left| \Fhat[s]^{(n)}(x) - \Fhat[t]^{(n)}(x) \right|\,dx
\leq \sup_{\substack{s,t\geq0 \\ |s-t|\leq \delta}}
\int\limits_K \left| \F[s]^{(n)}(x) - \F[t]^{(n)}(x) \right|\,dx,
\]
from which point the equicontinuity of the family $\{ \Fhat^{(n)} \}_{ n \in \N}$ is easily checked.
As $\DSpace$ is compact, we have by Arzel\`{a}-Ascoli that this sequence has convergent subsequences in $C([0,T],\DSpace)$ for each $T > 0.$  By diagonalization, we pick a convergent subsequence $n_k$ on $C([0,\infty),\DSpace)$ converging locally uniformly to some $\F^{(\infty)}.$  As for each $t \geq 0,$ this is the limit of a tight sequence of distribution functions $\F[t]^{(n_k)},$ it follows that $\F[t]^{(\infty)}$ is a distribution function for every $t\ge0$.
\end{proof}
%\begin{proof}
%We begin by showing compactness.  Note that the hypotheses on the sequence are preserved on passing to a subsequence, so it suffices to show that the whole sequence has a convergent subsequence.  For each fixed $t,$ the sequence of distribution functions $\A[t]^{(n)}$ is tight and hence has a convergent subsequence.  Thus, by diagonalization, we may pass to a subsequence of $\A[t]^{(n)}$ that converges weakly to some $\A[t]^{(*)}$ for each $t \in \Q \cap [0,\infty).$

%We can easily check that $\A[t]^{(*)}$ extends to be in $C([0,\infty),\DSpace).$  Given $\epsilon >0$ and a compact $K \in [0,\infty)$ then as the sequence is \aec, we have there is a $\delta > 0$ and $n_0$ so that for $n \geq n_0,$
%\[
%\sup_{\substack{s,t\geq0 \\ |s-t|\leq \delta}}
%\int\limits_K \left| \A[s]^{(n)}(x) - \A[t]^{(n)}(x) \right|\,dx
%\leq \epsilon.
%\]
%In particular, for any $s,t \in\Q \cap [0,\infty)$ with $|s-t| \leq \delta,$ we have on taking $n\to \infty$
%\[
%\int\limits_K \left| \A[s]^{(*)}(x) - \A[t]^{(*)}(x) \right|\,dx \leq \epsilon.
%\]
%By completeness of $\Lloc,$ it follows immediately that $\A[t]^{(*)}$ extends to be in $C([0,\infty),\Lloc).$  The tightness of $\{\A[t]^{(*)}\}_{t \in \Q}$ follows from the tightness of
%\(
%\{ \A[t]^{(n)} \}_{t,n}
%\), and hence we may actually take $\A[t]^{(*)}$ in $C([0,\infty),\DSpace).$  An analogous argument shows that any other limit point is also in $C([0,\infty),\DSpace),$ and it is routine to check that \(\A[t]^{(n)} \Dto \A[t]^{(*)}\).
%\end{proof}

\begin{proof}[Proof of Theorem~\ref{thm:main}]
Throughout the proof, all statements regarding the stochastic evolution $\A$ are meant to hold almost surely. From Lemma~\ref{lem:composition}, we have that
\begin{equation}
\label{eq:abs_master}
\A^{(n)} = \PDEOP(\A^{(n)}) + \M^{(n)}.
\end{equation}
By parts (1) and (2) of Proposition~\ref{prop:evo_prop}, and Lemma~\ref{lem:AA}, we may choose a sequence $\A^{(n_k)}$ which converges in $\DTSpace$ to an $\F^{(\infty)} \in C([0,\infty),\DSpace)$ with $\F[t]^{(\infty)}(+\infty) = 1$ for every $t\ge0$.
%\(
%\A^{(n_k)} \Dto \F^{(\infty)}.
%\)
Taking limits in~\eqref{eq:abs_master}, we get
\[
\PDEOP( \A^{(n_k)} ) + \M^{(n_k)} \Dto \F^{(\infty)}.
\]
By part (3) of Proposition~\ref{prop:evo_prop}, we have that $\M^{(n_k)} \Dto 0,$ and by continuity of $\PDEOP$, we get that
\[
\PDEOP( \A^{(n_k)} ) \Dto \PDEOP( \F^{(\infty)} ).
\]
Thus $\F^{(\infty)}$ is a fixed point of $\PDEOP.$
By part (4) of Proposition~\ref{prop:evo_prop} and Fatou's lemma, we get $\candy[\F[t]^{(\infty)}] = 1$ for all $t \geq 0,$ whence $\F^{(\infty)} \in \FPSpace.$  In particular $\FPSpace$ is nonempty.
%{\color{red} Prove that $\int_0^\infty x^{-2}\F[t]^{(\infty)}(x)\,dx = 1$ for all $t$. Why is that so? Fatou's lemma only gives $\le 1$!}

As $\FPSpace$ is nonempty, Lemma~\ref{lem:unique_F*} implies the existence of a unique distribution function $F^\Psi$ so that any evolution $\G \in \FPSpace$ has \(
\sup_{t \geq s} \candy[\G[t]-F^\Psi] \leq 2e^{-s}.
\)
%Furthermore, the stationary evolution $\F^*\equiv F^\Psi$ is contained in $\FPSpace$.

We now turn to showing that %$\F^{(\infty)} = \F^{*}$, when we will show that 
$\A[t]\to F^\Psi$ 
with respect to $\dblah$
as $t\to\infty$ (this also easily implies that $\F^{(\infty)} = \F^* \equiv F^{\Psi}$ but we won't need this fact).
Let $\ep>0$. By \eqref{eq:dblah_domination},
%in Lemma~\ref{lem:complete_candy}, 
there exists $\ep'>0$ such that $\dblah(G,H) \le \ep/2$ as soon as $\dcandy(G,H)<\ep'$. Now suppose there exists a sequence $(t_k)_{k\ge0}$ going to infinity such that $\dblah(\A[t_k],F^\Psi) > \ep$ for every $k$. Fix $T$, such that $2e^{-T} < \ep'$. As above, the sequence $(t_k - T)_{k\ge0}$ now contains a subsequence $(s_k)_{k\ge0}$, such that $\A^{(s_k)} \Dto \F^{(\aleph)}$ as $k\to\infty$, for some $\F^{(\aleph)}\in\FPSpace$. By Lemma~\ref{lem:unique_F*},
\[
 \candy[\F[T]^{(\aleph)}-F^\Psi] \leq 2e^{-T} < \ep',
\]
so that for large $k$, by the triangle inequality,
\[
\dblah(\A[T]^{(s_k)},F^\Psi) \le \dblah(\A[T]^{(s_k)},\F[T]^{(\aleph)}) + \dblah(\F[T]^{(\aleph)},F^\Psi) < \ep/2+\ep/2 = \ep.
\]
But since for every $k$, there exists $k'$ such that $\A[T]^{(s_k)} = \A[t_{k'}]$, this is in contradiction to the fact that  $\dblah(\A[t_k],F^\Psi) > \ep$ for every $k$.

Thus we have shown local $L^1$ convergence of the distribution function $\A[t]$ to $F^\Psi$ as $t\to \infty.$  From Lemma~\ref{lem:unique_F*}, $F^\Psi$ is continuous, and hence the convergence holds pointwise by Lemma~\ref{lem:L1loc_pointwise}.
%
%The previous argument now shows that for every $\ep>0$, there exists $T'$, such that $\dblah(\A[t],F^\Psi) < \ep$ for all $t\ge T'$. This readily implies that $\A^{(n)} \Dto \F^*$ as $n\to\infty$, which finishes the proof.
\end{proof}

\subsection*{Asymptotic equicontinuity}

The next ingredient we need is the asymptotic equicontinuity of $\A^{(n)}.$
\begin{lemma}
\label{lem:AEC}
There is a $\delta_0 > 0$ and a constant $C$ so that
for every $0< \delta_1 < \delta_0$ there exists almost surely a $T_{\delta_1} <\infty$
so that
\[
\sup_{\substack{t \geq T_{\delta_1} \\ {0 \leq \delta \leq \delta_1} } }
\int_0^\infty \frac{|\A[t+\delta](x) - \A[t](x)|}{x^2}\,dx \leq C\delta_1.
\]
\end{lemma}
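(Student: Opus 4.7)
The plan is to decompose $\A[t+\delta]-\A[t]$ into two pieces of definite sign and to bound their $\candy$-norms separately. Writing $\A[t](x) = \Atilde[t](e^{-t}x)$, one has
\[
\A[t+\delta](x) - \A[t](x) = \bigl[\Atilde[t+\delta](e^{-(t+\delta)}x) - \Atilde[t](e^{-(t+\delta)}x)\bigr] - \bigl[\Atilde[t](e^{-t}x) - \Atilde[t](e^{-(t+\delta)}x)\bigr].
\]
The first bracket is non-negative by monotonicity of $\Atilde$ in time, the second is non-negative by monotonicity of $\Atilde[t]$ in space together with $e^{-(t+\delta)}x \le e^{-t}x$, so the triangle inequality reduces the lemma to bounding the $\candy$-norms $I_1(t)$ and $I_2(t)$ of these two brackets.

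The key observation is that each jump $B(s,u,v,\cdot)$ of $\Atilde$ contributes exactly $1$ to $\candy[\Atilde]$: from the explicit form of $B$, a direct computation gives $\int_0^\infty x^{-2} B(s,u,v,x)\,dx = (1-v) + v = 1$. Therefore $\candy[\Atilde[t]] = n_0 + N_t$, where $N_t$ is the number of Poisson points in $[0,t]$. The change of variables $y = e^{-(t+\delta)}x$ then yields
\[
I_1(t) = e^{-(t+\delta)}\bigl(N_{t+\delta} - N_t\bigr), \qquad I_2(t) = (1-e^{-\delta})\candy[\A[t]].
\]
Since $\candy[\A[t]] = e^{-t}(n_0 + N_t) \to 1$ almost surely by the strong law for the $e^s\,ds$-intensity Poisson process, one has $\candy[\A[t]] \le 2$ for all $t$ larger than some a.s.\ finite $T'$, giving $I_2(t) \le 2(1-e^{-\delta}) \le 2\delta$.

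The main obstacle is controlling $I_1(t)$ uniformly in $t$, as concentration estimates naturally apply at fixed times, while we need a bound at every $t \ge T_\delta$. The fix is a length-$\delta$ discretization: for $t \in [k\delta,(k+1)\delta]$, monotonicity of $N$ gives $I_1(t) \le e^{-(k+1)\delta}\bigl(N_{(k+2)\delta} - N_{k\delta}\bigr)$. The random variable $N_{(k+2)\delta} - N_{k\delta}$ is Poisson with mean $e^{k\delta}(e^{2\delta}-1) \le 3\delta e^{k\delta}$ for $\delta \le \delta_0$ small enough, so the standard Chernoff bound for Poisson variables yields
\[
\Pr\bigl(N_{(k+2)\delta} - N_{k\delta} > 4\delta e^{k\delta}\bigr) \le \exp(-c\,\delta e^{k\delta})
\]
for some absolute $c>0$. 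These probabilities are summable in $k$ for each fixed $\delta>0$, so by Borel--Cantelli there is an a.s.\ finite $K_\delta$ such that $N_{(k+2)\delta} - N_{k\delta} \le 4\delta e^{k\delta}$ for all $k \ge K_\delta$.

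Combining the two bounds, setting $T_\delta := (K_\delta+1)\delta \vee T'$ gives $I_1(t) \le 4\delta e^{-\delta} \le 4\delta$ and $I_2(t) \le 2\delta$ for all $t \ge T_\delta$, so $\candy[\A[t+\delta]-\A[t]] \le 6\delta$, proving the lemma with $C=6$ and any sufficiently small $\delta_0$.
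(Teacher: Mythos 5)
Your proof is correct and follows essentially the same route as the paper: it splits the increment into the same two monotone pieces using monotonicity of $\Atilde$ in time and in space, computes each piece exactly via the fact that every jump contributes unit $\candy$-mass (Lemma~\ref{lem:candy_bias}), and then needs only a uniform-in-$t$ bound on the Poisson increment $N_{t+\delta}-N_t$. The only difference is technical: you obtain that increment bound by discretizing in $t$ and applying a Poisson Chernoff estimate with Borel--Cantelli, whereas the paper's Lemma~\ref{lem:AECsupport} derives it from the strong law of large numbers for the time-changed Poisson process.
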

\noindent It follows that for any $\delta_1 < 0$ and any $M > 0,$ almost surely
\begin{align*}
\lim_{n \to \infty}
\sup_{\substack{s,t\geq0 \\ |s-t|\leq \delta}}
\int\limits_0^M \left| \A[s]^{(n)}(x) - \A[t]^{(n)}(x) \right|\,dx
&\leq 
\sup_{\substack{t \geq T_{\delta_1} \\ {0 \leq \delta \leq \delta_1} } }
\int_0^\infty \frac{M^2|\A[t+\delta](x) - \A[t](x)|}{x^2}\,dx  \\
&\leq M^2C\delta_1.
\end{align*}
As this holds jointly with probability $1$ for a countable sequence of $\delta_1$ going to $0$ and $M \in \mathbb{N},$ the almost sure asymptotic equicontinuity of $\left\{ A^{(n)} \right\}_{n \geq 0}$ follows.

This lemma depends very weakly on the details of the interval splitting procedure outlined in~\eqref{eq:evolution}.  The only randomness that needs to be considered are fluctuations in the times at which the points appear under the law of the Poisson process.
\begin{lemma}
\label{lem:AECsupport}
Recall that $N_t$ is the number of points of $\Pi$ with first coordinate in $[0,t].$
There is a $\delta_0 > 0$ so that
for every $0< \delta < \delta_0$ there exists almost surely a $T_{\delta} <\infty $ so that
\[
\sup_{t \geq T_{\delta}}
\left[N_{t+\delta} - N_t\right] \leq 2\delta e^t.
\]
\end{lemma}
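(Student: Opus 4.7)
The plan is to reduce the continuous supremum to a countable event by sampling on a refined grid, then apply a Chernoff bound to the resulting Poisson increments, and finally invoke Borel--Cantelli. Projecting $\Pi$ onto its first coordinate gives a Poisson point process on $[0,\infty)$ of intensity $e^t\,dt$, so for every $a<b$, the difference $N_b - N_a$ is Poisson distributed with mean $e^b - e^a$. This is the only distributional input.

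First, I would introduce a grid $s_j := j\delta/3$ for $j \geq 0$. For any $t \geq 0$ lying in $[s_j, s_{j+1})$, one has $[t,t+\delta] \subseteq [s_j, s_{j+4}]$ and $2\delta e^t \geq 2\delta e^{s_j}$, so that
\[
\sup_{t\in[s_j,s_{j+1})}\bigl(N_{t+\delta} - N_t\bigr) \;\leq\; N_{s_{j+4}} - N_{s_j}.
\]
It is therefore enough to show that almost surely $N_{s_{j+4}} - N_{s_j} \leq 2\delta e^{s_j}$ for all sufficiently large~$j$, after which one takes $T_\delta$ to be $s_{j_0}$ for the first such $j_0$.

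Second, the Chernoff step. The variable $N_{s_{j+4}} - N_{s_j}$ is Poisson with mean $\mu_j = e^{s_j}(e^{4\delta/3}-1)$, so the target $2\delta e^{s_j}$ equals $\lambda_\delta \mu_j$ where $\lambda_\delta := 2\delta/(e^{4\delta/3}-1)$. Since $\lambda_\delta \to 3/2$ as $\delta \to 0$, choose $\delta_0$ small enough that $\lambda_\delta \geq 4/3$ for all $\delta < \delta_0$. The standard Poisson Chernoff bound $\Pr(\mathrm{Poisson}(\mu) > \lambda \mu) \leq \exp(-\mu(\lambda \log \lambda - \lambda + 1))$ for $\lambda > 1$, together with the fact that $\lambda \log \lambda - \lambda + 1$ is bounded below by a positive constant on $[4/3,\infty)$, yields a constant $c_1 = c_1(\delta_0) > 0$ with
\[
\Pr\bigl(N_{s_{j+4}} - N_{s_j} > 2\delta e^{s_j}\bigr) \;\leq\; e^{-c_1 \mu_j} \;\leq\; \exp\bigl(-c_1 \delta e^{j\delta/3}\bigr).
\]

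Third, the right-hand side is doubly-exponentially summable in $j$, so Borel--Cantelli furnishes a (random) $J_\delta$ beyond which the event fails for every $j \geq J_\delta$; take $T_\delta := s_{J_\delta}$. The one subtle point — and the only place that requires thought — is the choice of grid spacing strictly less than $\delta/2$: with a grid of spacing $\delta$ itself, the covering Poisson increment $N_{s_{j+2}} - N_{s_j}$ would already have mean close to $2\delta e^{s_j}$, leaving no multiplicative gap for Chernoff. Spacing $\delta/3$ (or any fraction smaller than $\delta/2$) is what produces the uniform gap $\lambda_\delta \geq 4/3$ and drives the entire estimate.
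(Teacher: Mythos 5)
Your proof is correct and takes a genuinely different route from the paper's. The paper observes that $N_{Q^{-1}(t)}$, with $Q(t)=e^t-1$, is a \emph{standard} Poisson process, invokes the strong law of large numbers to get $N_t/Q(t)\to 1$ almost surely, and then reads off
\[
\limsup_{t\to\infty}\frac{N_{t+\delta}-N_t}{e^t}=e^\delta-1 \quad\text{almost surely},
\]
after which one chooses $\delta_0$ so small that $\delta<\delta_0$ implies $e^\delta-1\le 2\delta$. Your argument replaces the SLLN by a first-principles estimate: cover the line by overlapping grid windows, bound each windowed Poisson increment by Chernoff, and conclude via Borel--Cantelli. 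The paper's route is shorter if one is willing to cite the law of large numbers for Poisson processes as a black box; yours is more elementary and more quantitative, since the doubly exponential summability of the error probabilities would in principle yield explicit tail bounds on the random threshold $T_\delta$. One small imprecision in your closing remark: the multiplicative gap already opens at spacing $h=\delta/2$ (the covering window $[s_j,s_{j+3}]$ then has mean $\sim\tfrac{3}{2}\delta e^{s_j}$, giving the same $\lambda_\delta\to 4/3$), so the real cutoff among reciprocal-integer spacings $\delta/k$ is $h\le\delta/2$ rather than strictly less. This has no bearing on your actual proof, which uses $\delta/3$.
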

\begin{proof}
Set $Q(t) = e^{t}-1,$ so that $N_{Q^{-1}(t)}$ is a standard Poisson process.  By the law of large numbers, \( N_{Q^{-1}(t)}/t \to 1 \) almost surely as $t \to \infty$.  Then
\begin{align*}
\limsup_{t \to \infty} \frac{N_{t+\delta} - N_t}{e^{t}}
&=\limsup_{t \to \infty} \frac{N_{t+\delta} - N_t}{Q(t)} \\
&=\limsup_{t \to \infty} \frac{N_{t+\delta}}{Q(t+\delta)}\frac{Q(t+\delta)}{Q(t)}  - \frac{N_t}{Q(t)} = e^{\delta} - 1
\end{align*}
almost surely.
Hence, choosing $\delta_0$ sufficiently small that $\delta < \delta_0$ implies $e^{\delta} -1 \leq 2\delta,$ the proof is complete.
\end{proof}
\begin{lemma}
  \label{lem:candy_bias} The following statements hold:
\begin{enumerate}
	\item $\int_0^\infty x^{-2}\Atilde[t](x)\,dx = n_0 + N_t$.
	\item $\int_0^\infty x^{-2}\A[t](x)\,dx\to1$ almost surely as $t\to\infty$.
\end{enumerate}
\end{lemma}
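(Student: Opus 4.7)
The plan is to reduce part (2) to part (1) via the simple scaling relation $\A[t](x)=\Atilde[t](e^{-t}x)$, and to prove part (1) by a direct Fubini computation exploiting the explicit structure of $\Atilde[t]$ as a size-biased empirical distribution function.

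For part (1), I would start from the identification $\Atilde[t]=\diFtilde[N_t]$ mentioned at the start of Section~\ref{sec:definitions}, which gives the explicit representation
\[
\Atilde[t](x) = \sum_{i=1}^{N_t+n_0} I_i^{(N_t)}\,\one[I_i^{(N_t)}\le x].
\]
Applying Tonelli's theorem and evaluating the inner integral,
\[
\int_0^\infty x^{-2}\Atilde[t](x)\,dx = \sum_{i=1}^{N_t+n_0} I_i^{(N_t)}\int_{I_i^{(N_t)}}^\infty x^{-2}\,dx = \sum_{i=1}^{N_t+n_0} 1 = N_t+n_0.
\]
This is essentially the same computation as in~\eqref{eq:sizebias}: integrating $x^{-2}$ against the size-biased distribution function recovers the total mass of the underlying counting measure, which here is the number of current intervals. (The statement of the lemma writes $N_t$; the additive constant $n_0$ coming from the initial configuration is irrelevant in part (2) after division by $e^{t}$.)

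For part (2), I would change variables $y=e^{-t}x$ in the integral,
\[
\int_0^\infty x^{-2}\A[t](x)\,dx = \int_0^\infty x^{-2}\Atilde[t](e^{-t}x)\,dx = e^{-t}\int_0^\infty y^{-2}\Atilde[t](y)\,dy = e^{-t}(N_t+n_0),
\]
where the last equality uses part (1). It then remains to show $e^{-t}(N_t+n_0)\to 1$ almost surely as $t\to\infty$. Since $N_t$ is Poisson with parameter $e^t-1$, this is the strong law of large numbers for the Poisson process $\Pi$, exactly the input already invoked in the proof of Lemma~\ref{lem:AECsupport} (where $N_{Q^{-1}(t)}/t\to 1$ a.s.\ for $Q(t)=e^t-1$).

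There is no serious obstacle here: part (1) is a one-line Tonelli computation and part (2) is a change of variables combined with a classical SLLN. The only point requiring minor care is keeping track of the $n_0$ initial intervals, which is harmless in the limit.
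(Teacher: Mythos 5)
Your proof is correct and lands on the same computational core as the paper's, namely the integration-by-parts identity~\eqref{eq:sizebias}, which says that integrating a size-biased distribution function against $x^{-2}$ recovers the number of masses. The paper instead decomposes $\Atilde[t]$ into its jump increments $B(s,u,v,\cdot)$ from~\eqref{eq:evolution} and checks that $\int_0^\infty x^{-2}B(s,u,v,x)\,dx = 1$ for each Poisson point, so the increments contribute exactly $N_t$; your route via $\Atilde[t]=\diFtilde[N_t]$ and a single Tonelli computation is equivalent and slightly more direct. In fact you are marginally more careful: the paper's statement and proof of (1) silently drop the contribution $\int_0^\infty x^{-2}\Atilde[0](x)\,dx = \candy[\Atilde[0]] = n_0$ from the initial configuration, whereas you keep it and correctly note that it is killed by the factor $e^{-t}$ in part (2). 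Part (2), via the substitution $y=e^{-t}x$ and the SLLN for the Poisson process, is exactly the paper's argument.
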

\begin{proof}
  The first observation is an immediate consequence of Lemma~\ref{lem:candy_fubini}.  The second observation follows from changing variables
\[
\int_0^\infty x^{-2}\A[t](x)\,dx
=e^{-t}\int_0^\infty x^{-2}\Atilde[t](x)\,dx = e^{-t}(n_0 +N_t).
\]
As $e^{-t}N_t \to 1$ almost surely, we have completed the proof.
\end{proof}
With this in hand, we now turn to proving Lemma~\ref{lem:AEC}.
\begin{proof}[Proof of Lemma~\ref{lem:AEC}]
We begin by changing variables to remove the spatial scaling of the distribution functions
\begin{align}
\label{eq:AEC1}
\int_0^\infty \frac{|\A[t+\delta](x) - \A[t](x)|}{x^2}\,dx
=\int_0^\infty e^{-t}\frac{|\Atilde[t+\delta](e^{-\delta}x) - \Atilde[t](x)|}{x^2}\,dx.
\end{align}
The key observation is a pair of domination relations that vastly simplify the integral.  On the one hand, from the fact that $\Atilde[t+\delta]$ is nondecreasing, we have that
\(
\Atilde[t+\delta](e^{-\delta}x) \leq \Atilde[t+\delta](x)
\)
for all $x \geq 0.$  On the other hand, from the fact that $\Atilde[t+\delta]$ was built from $\Atilde[t]$ by adding non-negative functions, we have that
\(
\Atilde[t+\delta](x) \geq \Atilde[t](x).
\)
Thus on applying both of these observations to~\eqref{eq:AEC1} we have that
\begin{align}
\label{eq:AEC2}
e^{-t}\int_0^\infty \frac{|\Atilde[t+\delta](e^{-\delta}x) - \Atilde[t](x)|}{x^2}\,dx
\leq
 ~&e^{-t}\int_0^\infty\frac{\Atilde[t+\delta](x) - \Atilde[t+\delta](e^{-\delta}x)}{x^2}\,dx \\
 \nonumber
+
 &e^{-t}\int_0^\infty\frac{\Atilde[t+\delta](x) - \Atilde[t](x)}{x^2}\,dx.
\end{align}
By applying Lemma~\ref{lem:candy_bias}, Lemma~\ref{lem:candy_fubini} and a change of variables, the first of these integrals can be calculated exactly:
\begin{align}
\label{eq:AEC3}
e^{-t}\int_0^\infty\frac{\Atilde[t+\delta](x) - \Atilde[t+\delta](e^{-\delta}x)}{x^2}\,dx = e^{-t}N_{t+\delta}(1-e^{-\delta}).
\end{align}
We can also calculate the second integral exactly
\begin{align}
\label{eq:AEC4}
 e^{-t}\int_0^\infty\frac{\Atilde[t+\delta](x) - \Atilde[t](x)}{x^2}\,dx
 =e^{-t}\left(
 N_{t+\delta} - N_t
 \right).
\end{align}
Combining \eqref{eq:AEC1}, \eqref{eq:AEC2}, \eqref{eq:AEC3} and \eqref{eq:AEC4} and using the monotonicity of \eqref{eq:AEC3} and \eqref{eq:AEC4} in $\delta,$ we get that
\begin{equation}
  \sup_{0 \leq \delta \leq \delta_1}  
\int_0^\infty \frac{|\A[t+\delta](x) - \A[t](x)|}{x^2}\,dx
\leq e^{-t}N_{t+\delta_1}(1 - e^{-\delta_1}) + e^{-t}(N_{t+\delta_1} - N_t).
  \label{eq:AEC5}
\end{equation}

By Lemma~\ref{lem:candy_bias}, there is a $\delta_0 > 0$ so that
for every $0< \delta_1 < \delta_0$ there exists almost surely a $T_{\delta_1} < \infty$ so that
\[
\sup_{t \geq T_{\delta_1}}
\left[N_{t+\delta_1} - N_t\right] \leq 2\delta_1 e^t.
\]
Similarly, we get that there is a $T < \infty $ so that $N_{t} \leq 2e^{t}$ for all $t > T.$
Hence, combining this with \eqref{eq:AEC5} we get that
\[
\sup_{t \geq T_\delta \wedge T}
  \sup_{0 \leq \delta \leq \delta_1}  
\int_0^\infty \frac{|\A[t+\delta](x) - \A[t](x)|}{x^2}\,dx \leq 2(e^{\delta_1}-1) + 2\delta_1,
\]
so that picking $C>0$ sufficiently large, we have completed the proof.
\end{proof}

\subsection*{Decay of the noise}

The remaining condition to check is that $\M^{(n)} \Dto 0.$  Consider the process $(I_t)_{t\ge0}$ defined by
\[
I_{t} = \int_0^\infty\frac{1}{x^3}( \M[t](x))^2\,dx.
\]
We will show that $I_t \to 0,$ but before doing so, let us see how this implies that $\M^{(n)} \Dto 0.$  Recall that $\M[t]^{(n)}= \M[t+n] - T_t\M[n].$
\begin{align*}
\int\limits_0^\infty \frac 1 {x^3} \left(\M[t]^{(n)}(x)\right)^2\,dx
&=\int\limits_0^\infty \frac{1}{x^3} \left( \M[t+n](x) - \M[n](e^{-t}x)\right)^2\,dx \\
&\leq 2\int\limits_0^\infty \frac{1}{x^3} \left[\left( \M[t+n](x) \right)^2
+ \left(\M[n](e^{-t}x)\right)^2\right]\,dx \\
&\leq 2I_{t+n} +
e^{-2t}\int\limits_0^\infty \frac{1}{x^3}( \M[n](x))^2\,dx,\\
\intertext{where we have made a change of variables for the last inequality.  Thus we conclude}
\int\limits_0^\infty \frac 1 {x^3} \left(\M[t]^{(n)}(x)\right)^2\,dx
&\leq 2I_{t+n} + e^{-2t}I_{n}.
\end{align*}
In particular if $I_t \to 0,$ then
\[
\sup_{t \geq 0}\int_0^\infty \frac{1}{x^3} \left( \M[t]^{(n)}(x) \right)^2\,dx \to 0
\]
as $n \to \infty.$  Then, for any fixed $k > 0$ and any $t \geq 0,$ we may apply Cauchy-Schwarz to conclude that
\begin{align*}
\int_0^k \left| \M[t]^{(n)}(x) \right|\,dx
&=\int_0^k x^{3/2}x^{-3/2}\left| \M[t]^{(n)}(x) \right|\,dx \\
&\leq \frac{k^2}{2} \biggl[\int_0^k x^{-3}\left| \M[t]^{(n)}(x) \right|^2\,dx\biggr]^{1/2}.
\end{align*}
Thus we actually conclude that for any fixed compact $K \subset [0,\infty)$
\[
\sup_{t \geq 0} \int_K \left| \M[t]^{(n)}(x) \right|\,dx \to 0
\]
as $n \to \infty.$  All said, we have proven:
\begin{lemma}
\label{lem:CS_noise}
If $I_t \to 0$ almost surely, then $\M^{(n)} \Dto 0$ almost surely.
\end{lemma}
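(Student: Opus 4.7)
The plan is to reduce the required local $L^1$ convergence $\M^{(n)} \Dto 0$ to the scalar hypothesis $I_t \to 0$ in two short steps, pooling together the two weighted bounds on $\M[t+n]$ and $T_t\M[n]$.

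First, I would substitute the definition $\M[t]^{(n)}(x) = \M[t+n](x) - \M[n](e^{-t}x)$ into $\int_0^\infty x^{-3}(\M[t]^{(n)}(x))^2\,dx$, use $(a-b)^2 \le 2a^2 + 2b^2$, and perform the change of variable $y = e^{-t}x$ in the integral coming from $T_t\M[n]$, which produces a scaling factor of $e^{-2t}$. This gives the clean bound
\[
\int_0^\infty \frac{1}{x^3}\bigl(\M[t]^{(n)}(x)\bigr)^2\,dx \;\le\; 2\,I_{t+n} + 2e^{-2t}\,I_n.
\]
Under the hypothesis $I_t \to 0$ almost surely, the right-hand side tends to $0$ uniformly in $t\ge 0$ as $n\to\infty$: indeed $\sup_{t\ge 0} I_{t+n} = \sup_{s\ge n} I_s \to 0$, and $e^{-2t}I_n \le I_n \to 0$.

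Second, I would convert this weighted $L^2$ estimate into local $L^1$ control via Cauchy--Schwarz. For any compact $K\subset [0,k]$, factoring $1 = x^{3/2}\cdot x^{-3/2}$ and applying Cauchy--Schwarz gives
\[
\int_0^k \bigl|\M[t]^{(n)}(x)\bigr|\,dx \;\le\; \Bigl(\int_0^k x^3\,dx\Bigr)^{1/2}\Bigl(\int_0^k x^{-3}\bigl(\M[t]^{(n)}(x)\bigr)^2\,dx\Bigr)^{1/2} \;\le\; \frac{k^2}{2}\bigl(2I_{t+n} + 2e^{-2t}I_n\bigr)^{1/2},
\]
which by the first step vanishes uniformly in $t\ge 0$ as $n\to\infty$. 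Since $K$ was an arbitrary compact set, this is exactly the content of $\M^{(n)}\Dto 0$.

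There is essentially no obstacle: both manipulations are direct. The only point meriting attention is that the $t$-dependence in the bound from Step~1 passes harmlessly to the supremum over $t \ge 0$, which it does because that dependence appears only through the shift $t+n$ (controlled by the a.s.\ limit $I_t \to 0$) and through the non-increasing factor $e^{-2t}\le 1$.
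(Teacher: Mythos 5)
Your proposal is correct and takes essentially the same approach as the paper: the same $(a-b)^2 \le 2a^2 + 2b^2$ decomposition followed by the change of variable $y=e^{-t}x$ to get $\int_0^\infty x^{-3}(\M[t]^{(n)})^2\,dx \le 2I_{t+n} + 2e^{-2t}I_n$, then the same Cauchy--Schwarz with weight $x^{3/2}\cdot x^{-3/2}$ producing the constant $k^2/2$.
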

We now turn to estimating $I_t,$ which by a change of variables we can represent as
\[
I_{t} = e^{-2t}\int_0^\infty\frac{1}{x^3}( \Mtilde[t](x))^2\,dx.
\]
Hence, we set $J_{t}$ to be
\[
J_t = \int_0^\infty\frac{1}{x^3}( \Mtilde[t](x))^2\,dx,
\]
and note that
as $\Mtilde(x)$ is a martingale for every $x$, this is a submartingale.  Thus, by virtue of Doob's maximal inequality, to control its supremum in $t$, it is enough to control its expectation. Taking expectations, we have
\begin{align}
\label{eq:Jtk}
\Exp J_{t}
= \int_0^\infty \frac{1}{x^3} \Exp \left\langle \Mtilde(x) \right\rangle_t\,dx,
\end{align}
with $\left\langle \M(x) \right\rangle_t$ the predictable quadratic variation.

As for the predictable quadratic variation, we have the following bound.
\begin{lemma}
\label{lem:qv}
For any $x \geq 0$ and any $t \geq 0,$
\[
\left\langle \Mtilde{(x)} \right\rangle_t
\leq\frac{2x^3}{3}
\int_0^t e^{s}\int_x^\infty \frac{1}{z} d\Psi(\Atilde[s](z))\,ds.
\]
\end{lemma}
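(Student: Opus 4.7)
The plan is to identify $\Mtilde[t](x)$ as a compensated stochastic integral against the Poisson random measure $\Pi$, extract its previsible quadratic variation from the standard theory, and then reduce the desired bound to an elementary fiberwise calculation in the variable $v$.

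From \eqref{eq:evolution} together with the drift computation \eqref{eq:B_exp} that led to the semimartingale decomposition of $\Atilde[t](x)$ at the end of Section~\ref{sec:definitions}, I would first write
\[
\Mtilde[t](x) = \int_0^t\iint B(s,u,v,x)\bigl(\Pi(ds,du,dv) - e^s\,ds\,d\Psi(u)\,dv\bigr),
\]
from which the standard formula for the previsible quadratic variation of a compensated Poisson integral yields
\[
\langle\Mtilde(x)\rangle_t = \int_0^t e^s \iint B(s,u,v,x)^2\,d\Psi(u)\,dv\,ds.
\]
Thus it suffices to prove the pointwise-in-$s$ inequality
\[
\iint B(s,u,v,x)^2\,d\Psi(u)\,dv \le \frac{2x^3}{3}\int_x^\infty \frac{1}{z}\,d\Psi(\Atilde[s-](z)),
\]
after which integration against $e^s\,ds$ on $[0,t]$ gives the lemma.

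Next, by the change-of-variable identity \eqref{eq:f_ell} applied to $f(\ell) = \ell^{-1}\one[\ell > x]$, the right-hand side equals $\frac{2x^3}{3}\int_0^1 \ell_s(u)^{-1}\one[\ell_s(u) > x]\,d\Psi(u)$, so it is enough to establish the fiberwise bound
\[
\int_0^1 B(s,u,v,x)^2\,dv \le \frac{2x^3}{3\,\ell_s(u)}
\]
for every $u$ with $\ell_s(u) > x$. Setting $\ell = \ell_s(u)$ and $a = x/\ell \in (0,1)$, the definition of $B$ gives $B/\ell = v\one[v\le a] + (1-v)\one[v\ge 1-a]$, and $\int_0^1(B/\ell)^2\,dv$ becomes an explicit one-variable integral. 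When $a\le 1/2$, the two indicators have disjoint supports and one computes $\int_0^1 (B/\ell)^2\,dv = 2a^3/3$ directly, which rearranges to the claimed bound. The overlap regime $a>1/2$, in which both fragments of the split interval can simultaneously fit in $[0,x]$, requires more care; here I would exploit the symmetry $v\leftrightarrow 1-v$ together with the constraint $\ell \le 2x$ to control the contribution from the overlap window $[1-a,a]$ (on which $B=\ell$) and verify the same upper bound.

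The main obstacle is precisely this overlap regime, where a naive bound on $(B/\ell)^2$ yields a strictly larger integral than $2a^3/3$ and one must use the $\ell\le 2x$ constraint carefully. Once the fiberwise estimate is in hand, the rest of the argument is a routine assembly using \eqref{eq:f_ell} and integration in $s$.
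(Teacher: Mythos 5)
Your overall plan is exactly the paper's: identify $\Mtilde(x)$ as a compensated Poisson integral, write $\langle\Mtilde(x)\rangle_t=\int_0^t e^s\iint B^2\,d\Psi\,dv\,ds$, reduce to a fiberwise $v$-integral, and finish with \eqref{eq:f_ell}. The obstacle you flag at the end, however, is not a technicality to be massaged away: the fiberwise bound you need is \emph{false} in the overlap regime. Writing $\ell=\ell_s(u)>x$, $a=x/\ell\in(0,1)$, and $g(v)=v\one[v\le a]+(1-v)\one[1-v\le a]=B/\ell$, the exact value for $a>1/2$ is
\[
\int_0^1 g(v)^2\,dv \;=\; \frac{2(1-a)^3}{3}+2a-1,
\]
and
\[
\int_0^1 g(v)^2\,dv - \frac{2a^3}{3} \;=\; \frac{6a^2-4a^3-1}{3},
\]
whose derivative is $4a(1-a)>0$ on $(1/2,1)$ with value $0$ at $a=1/2$; so the gap is strictly positive for every $a>1/2$ and approaches $1/3$ as $a\to1$. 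There is no ``$\ell\le 2x$ constraint to exploit'': the overlap regime \emph{is} $x<\ell<2x$, and the inequality fails throughout it. So the bound $\int g^2\,dv\le 2a^3/3$ cannot be rescued, and the step ``verify the same upper bound'' has no correct completion.

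What this really means is that the constant $\tfrac{2x^3}{3}$ in the lemma statement is itself off by a factor of $2$. The paper's one-line justification (``applying the convexity of $x^2$'') only yields $(u+w)^2\le 2u^2+2w^2$, which together with the $v\leftrightarrow 1-v$ symmetry gives $\int_0^1 g^2\,dv\le \tfrac{4a^3}{3}$, hence
\[
\iint B(s,u,v,x)^2\,d\Psi(u)\,dv \;\le\; \frac{4x^3}{3}\int_x^\infty \frac{1}{z}\,d\Psi(\Atilde[s-](z)),
\]
and the lemma's conclusion should read $\tfrac{4x^3}{3}$ rather than $\tfrac{2x^3}{3}$. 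This is harmless downstream: in Lemma~\ref{lem:M_decay} the $x$-integration still produces a constant times $\int_0^t e^s\,ds$, and the Doob/Borel--Cantelli argument only needs that constant to be finite. So the correct move is not to fight the overlap regime but to recognize the constant in the statement as a slip, apply the convexity bound uniformly in $a$, and adjust $\tfrac{2x^3}{3}$ to $\tfrac{4x^3}{3}$.

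Everything else in your outline (the compensated Poisson integral representation, the formula for $\langle\Mtilde(x)\rangle_t$, the use of \eqref{eq:f_ell}, and the exact computation in the disjoint regime $a\le1/2$) is correct and matches the paper.
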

\begin{proof}[Proof of Lemma~\ref{lem:qv}]
At a point $(s,u,v) \in \Pi,$
the quadratic variation of $M_s(x)$ increases by at most ${B(s,u,v,x)}^2.$ As the process is pure jump, we may write that
\begin{align*}
\left\langle \Mtilde(x) \right\rangle_{t}
% =
%\Exp\left[
%\int\limits_{0}^t
%\int\limits_{[0,1]^2}
%\ell_s(u)^2{Q(s,u,v,x)}^2\,d\Pi(s,u,v)
%\right]
\leq\int\limits_{0}^t f(s,x) e^s\,ds
\end{align*}
with $f(s,x)$ given by ${B(s,u,v,x)}^2$ conditional on $\filt_{s-}$ and on the event that there is a jump at $s,$ i.e.
\[
f(s,x) =
\iint
{B(s,u,v,x)}^2\,d\Psi(u)dv.
\]
Doing the $v$ integral and applying the convexity of $x^2,$ we may bound this by
\begin{align*}
f(s,x) =
\iint
{B(s,u,v,x)}^2\,d\Psi(u)dv
\hspace{-1.5in}&\hspace{1.5in} \\
&\leq
\int\limits_{0}^1
\ell_s(u)^2
\one[\ell_s(u) > x]
\int\limits_{0}^1
2v^2 \one[\ell_s(u) v \leq x]\,dv\,d\Psi(u) \\
&=
\int\limits_{0}^1
\ell_s(u)^2
\one[\ell_s(u) > x]
\frac{2x^3}{3 \ell_s(u)^3}\,d\Psi(u) \\
&=
\frac{2x^3}{3} \int_x^\infty \frac{1}{z} d\Psi(\A[s-](z)).
\end{align*}
%Thus, we have that the quadratic variation of the shifted process $\M[t]^{(n)}$ satisfies
%\begin{align*}
%\Exp
%\left\langle M^{(n)}(x) \right\rangle_{t}
%=\Exp\int\limits_{0}^t f(s+n,e^{-(s+n)}x) e^{s+n}\,ds
%\leq \frac{2x^2}{3}
%\int\limits_{0}^t e^{-(s+n)}\,ds
%\leq \frac{2x^2e^{-n}}{3}.
%\end{align*}
\end{proof}

With the quadratic variation estimate, the desired result about $I_{t}$ follows immediately.
\begin{lemma}
\label{lem:M_decay}
With probability $1,$ we have that
\[
\limsup_{t \to \infty} I_{t} = 0.
\]
In particular, with probability $1,$ we have $\M^{(n)} \Dto 0$ as $n\to\infty$.
\end{lemma}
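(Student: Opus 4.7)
The plan is to bound $\mathbb{E} J_t$ exponentially in $t$, then combine Doob's submartingale maximal inequality with the deterministic factor $e^{-2t}$ in $I_t = e^{-2t} J_t$ and a Borel--Cantelli argument at integer times.

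First I would justify that $(J_t)_{t\ge0}$ is a submartingale. For each fixed $x$, the process $(\Mtilde[t](x))_{t\ge0}$ is a martingale, hence $((\Mtilde[t](x))^2)_{t\ge0}$ is a submartingale. Since $J_t$ is a non-negative integral of these processes against the $\sigma$-finite measure $x^{-3}\,dx$, the Fubini--Tonelli theorem gives $\mathbb{E}[J_t \mid \filt_s] \ge J_s$ for $s\le t$. Next, I would estimate $\mathbb{E} J_t$. By \eqref{eq:Jtk} and Lemma~\ref{lem:qv},
\[
\mathbb{E} J_t \le \frac{2}{3} \int_0^\infty \int_0^t e^s\, \mathbb{E}\!\left[\int_x^\infty \frac{1}{z}\,d\Psi(\Atilde[s](z))\right] ds\,dx.
\]
Swapping the order of the outer integrals and using $\int_0^\infty \one[x\le z]\,dx = z$, the inner double integral over $x$ and $z$ collapses:
\[
\int_0^\infty \int_x^\infty \frac{1}{z}\,d\Psi(\Atilde[s](z))\,dx = \int_0^\infty d\Psi(\Atilde[s](z)) \le 1.
\]
Therefore $\mathbb{E} J_t \le \tfrac{2}{3}(e^t-1) \le \tfrac{2}{3}e^t$.

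Now I would pass from $L^1$ bounds to almost sure control. Fix $\epsilon > 0$. For $s \in [n,n+1]$ we have $I_s = e^{-2s}J_s \le e^{-2n} J_s$, so
\[
\mathbb{P}\!\left(\sup_{n\le s\le n+1} I_s > \epsilon\right)
\le \mathbb{P}\!\left(\sup_{n\le s\le n+1} J_s > e^{2n}\epsilon\right)
\le \frac{\mathbb{E} J_{n+1}}{e^{2n}\epsilon}
\le \frac{2e}{3\epsilon}\, e^{-n},
\]
by Doob's maximal inequality for the non-negative submartingale $J$. This bound is summable in $n$, so the Borel--Cantelli lemma yields $\sup_{s\in[n,n+1]} I_s \to 0$ almost surely, hence $I_t\to 0$ almost surely. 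The lemma's final assertion then follows from Lemma~\ref{lem:CS_noise}.

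The only mildly delicate step is the submartingale claim for $J_t$: one has to justify interchanging a conditional expectation with the $x$-integral, which is immediate once one recognises the integrand is non-negative and can invoke Fubini--Tonelli. All other steps are essentially a clean bookkeeping of the bound in Lemma~\ref{lem:qv}, and the exponential gap between $\mathbb{E} J_t = O(e^t)$ and the rescaling $e^{-2t}$ in $I_t$ is what makes the Borel--Cantelli step succeed.
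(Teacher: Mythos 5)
Your proof is correct and follows essentially the same route as the paper's: bound $\Exp J_t$ by $\tfrac{2}{3}e^t$ via Lemma~\ref{lem:qv} and Fubini--Tonelli, apply Doob's maximal inequality to the submartingale $J$, and conclude by Borel--Cantelli together with the $e^{-2t}$ factor in $I_t$. The only cosmetic difference is bookkeeping: the paper sets a growing threshold $e^{3t/2}$ for $J_t$ at integer times and then divides by $e^{2t}$, while you fold the rescaling into the event $\{\sup_{n\le s\le n+1} I_s > \epsilon\}$ directly; both give the same summable tail and the same conclusion.
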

\begin{proof}
From~\eqref{eq:Jtk} and Lemma~\ref{lem:qv}, we have that
\[
\Exp J_t \leq \Exp\int_0^\infty \frac{1}{x^3}
\frac{2x^3}{3}
\int_0^t e^{s}\,ds
\int_x^\infty \frac{1}{z} d\Psi(\Atilde[s](z))\,dx.
\]
By applying the Fubini-Tonelli theorem, we have that
\begin{align*}
\int_0^\infty
\int_0^t e^{s}\,ds
\int_x^\infty \frac{1}{z} d\Psi(\Atilde[s](z))\,dx
&=
\int_0^t e^{s}
\int_0^\infty \frac{1}{z}
\int_0^z dx\,d\Psi(\Atilde[s](z))\,ds \\
&=
\int_0^t e^{s}
\int_0^\infty d\Psi(\Atilde[s](z))\,ds \\
&=
\int_0^t e^{s}\,ds.
\end{align*}
By Doob's maximal inequality, we have
\[
\Pr\left[
\sup_{0 \leq s \leq t} J_s > e^{3t/2}
\right] \leq e^{-t/2}.
\]
Taking $t$ to run over the natural numbers, we may apply Borel Cantelli to conclude there is a random $T < \infty$ so that $J_t \leq Ce^{3t/2}$ for all $t > T.$  Hence, as $I_t = e^{-2t}J_t,$ we get that $I_t \to 0$ with probability $1.$
\end{proof}

\section{Proof of Theorem~\ref{thm:main0} and Lemma~\ref{lem:candy_portmanteau}}
\label{sec:proof}

\begin{proof}[Proof of Theorem~\ref{thm:main0}]
Recall that $N_t$ is the number of points of $\Pi$ on $[0,t].$ As mentioned at the beginning of Section~\ref{sec:definitions}, we can realize $\diFtilde[n]$ in terms of $\Atilde[t]$ in such a way that
\(\Atilde[t] = \diFtilde[N_t]\) holds for all $t \geq 0.$ Recall that $\A[t](x) = \Atilde[t](e^{-t}x)$ and $\diF[N_t](x) = \diFtilde[N_t](x / (N_t+n_0)).$  By Theorem~\ref{thm:main}  and the fact that $e^{-t}(N_t+n_0) \to 1$ almost surely as $t\to \infty$, $D_n$ now converges almost surely to $F^\Psi$. Furthermore, $\candy[D_n] = \candy[F^\Psi] = 1$ for all $n$. By Lemma~\ref{lem:candy_portmanteau}, this implies that $\mu_n$ (weakly) converges almost surely to $\mu^\Psi$.
\end{proof}

\begin{proof}[Proof of Lemma~\ref{lem:candy_portmanteau}]
	\noindent {$(i) \implies (ii):$}
As $\dblah(F_n,F) \to 0,$ we have $F_n \to F$ in measure.
Since $x^{-2}|F_n(x) - F(x)| \leq x^{-2}(F_n(x) + F(x))$ for all $x > 0$ and since by assumption
\[
	\int_0^\infty x^{-2}(F_n(x) + F(x))\,dx 
	\to
	\int_0^\infty 2x^{-2}F(x)\,dx, 
\]
as $n \to \infty,$
then by sequential dominated convergence,
\[
	\dcandy(F_n,F) = \int_0^\infty x^{-2}|F_n(x) - F(x)|\,dx \to 0
\]
as $n \to\infty.$

\vspace{10pt}
\noindent $(ii) \implies (i):$
This implication follows immediately from \eqref{eq:dblah_domination}, which we recall for convenience.
For any $F,G \in \DSpace_1,$ we have that for any $K \in \N,$
\begin{equation*}
%\label{eq:dblah_domination}
\dblah(F,G) \leq 2^{-K} + \int_0^K |F(x) - G(x)|\,dx \leq 2^{-K} + K^2 \dcandy(F,G),
\end{equation*}
from which the desired implication follows.

%From Lemma \ref{lem:candy_fubini}, we have that $\int_0^\infty x^{-1}\,dF_n(x) \to \int_0^\infty x^{-1}\,dF(x).$  Hence it suffices to show 

\vspace{10pt}
\noindent $(i) \implies (iii):$

Let $G \in \DSpace_1.$  By taking contrapositives, we have that the finiteness of $\int_0^\infty x^{-2}G(x)\,dx$ implies that $\liminf_{\epsilon \to 0} \epsilon^{-1}G(\epsilon)=0.$  Hence, there is a decreasing sequence $(\epsilon_k)_{k=1}^\infty$ along which $\lim_{k \to\infty} \epsilon_k^{-1}G(\epsilon_k) = 0.$  Applying integration by parts, we get that for any $y > 0$
\[
	\int_{\epsilon_k}^y x^{-1}\,dG(x) = y^{-1}G(y) - \epsilon_k^{-1}G(\epsilon_k) + \int_{\epsilon_k}^y x^{-2}G(x)\,dx.
\]
Taking $k \to \infty,$ we have by monotone convergence that
\[
	\int_{0}^y x^{-1}\,dG(x) = y^{-1}G(y) + \int_{0}^y x^{-2}G(x)\,dx.
\]
Hence we may apply this representation to $F_n$ to get that for all $y > 0,$
\[
	\int_{0}^y x^{-1}\,dF_n(x) = y^{-1}F_n(y) + \int_{0}^y x^{-2}F_n(x)\,dx.
\]
From $(i)$ and Lemma~\ref{lem:L1loc_pointwise} we have that $F_n \to F$ almost everywhere, and hence for almost every $y > 0,$ $y^{-1}F_n(y) \to y^{-1}F(y).$  From $(ii),$ we have that the integral term converges to the same with $F(x)$ in place of $F_n(x)$ for all $y > 0.$  Hence we have shown that for almost all $y > 0,$
\[
	\int_{0}^y x^{-1}\,dF_n(x)
	\to
	\int_{0}^y x^{-1}\,dF(x),
\]
which implies the weak convergence of the measures.
%From Lemma \ref{lem:candy_fubini}, we have that $\int_0^\infty x^{-1}\,dF_n(x) \to \int_0^\infty x^{-1}\,dF(x).$   

\vspace{10pt}
\noindent $(iii) \implies (i):$
For every compactly supported continuous function $\phi : [0,\infty) \to \R,$ we have that $x \mapsto x \cdot \phi(x)$ is again compactly supported and continuous.  Hence 
	\[
		\int\limits_0^\infty \phi(x)dF_n(x) 
		=
		\int\limits_0^\infty x\phi(x)x^{-1}dF_n(x) 
		\to
		\int\limits_0^\infty x\phi(x)x^{-1}dF(x) 
		=
		\int\limits_0^\infty \phi(x)dF(x) 
	\]
	as $n\to \infty.$  By a standard argument, this implies the almost everywhere convergence of $F_n$ to $F$ and hence that $\dblah(F_n,F) \to 0$ by dominated convergence. 
	From weak convergence, we have that $\int_0^\infty x^{-1}\,dF_n(x) \to \int_0^\infty x^{-1}\,dF(x).$   Hence by Lemma \ref{lem:candy_fubini}, we conclude that $\candy[F_n] \to \candy[F],$
	which completes the proof.   
\end{proof}

\section{Properties of limiting profile}
\label{sec:limit_profile}

In this section, we study properties of the distribution function $F = F^\Psi$ from Lemma~\ref{lem:unique_F*}, i.e.\ the distribution function of the size-biased empirical measure of interval lengths in the limit as the number of intervals goes to infinity.
%Recall that $F$ is absolutely continuous and satisfies the equation~\eqref{}
%\begin{equation}
%F'(x) = x\int_x^\infty \frac 1 z\,d\Psi(F(z))
%\end{equation}
%for almost every $x.$
We have the following lemma:
\begin{lemma}
\label{lem:ode}
Assume $\Psi$ is absolutely continuous with derivative $\psi$ and $\FPSpace \ne \emptyset$. Let $F=F^\Psi$ be the limiting distribution function from Lemma~\ref{lem:unique_F*}. Then $F\in C^1([0,\infty),[0,1]),$ its derivative $F'$ is absolutely continuous and $F$ satisfies
\begin{equation}
xF''(x)-F'(x)+xF'(x)\psi(F(x)) = 0
\label{eq:F_ode}
\end{equation}
for almost every $x\ge0.$ If $\psi$ is continuous on $[0,\infty)$, then $F\in C^2([0,\infty),[0,1])$ and \eqref{eq:F_ode} holds for every $x\ge0$.
\end{lemma}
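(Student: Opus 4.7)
The starting point is the integral equation from Lemma~\ref{lem:unique_F*}, which already tells us that $F$ is absolutely continuous and satisfies
\[
\frac{F'(x)}{x} = \int_x^\infty \frac{1}{z}\,d\Psi(F(z))
\]
for almost every $x > 0$. The first task will be to exploit the extra assumption that $\Psi$ is absolutely continuous to convert the Lebesgue--Stieltjes integral on the right into an ordinary Lebesgue integral. For any $0\le a < b$, the fundamental theorem of calculus for the absolutely continuous function $\Psi$, followed by the change-of-variables formula for the absolutely continuous monotone function $F$, gives
\[
\int_a^b d\Psi(F(z)) = \Psi(F(b)) - \Psi(F(a)) = \int_{F(a)}^{F(b)}\psi(u)\,du = \int_a^b \psi(F(z))\,F'(z)\,dz.
\]
Since the two Borel measures $d\Psi(F(z))$ and $\psi(F(z))F'(z)\,dz$ agree on every interval, they are equal, so
\[
\frac{F'(x)}{x} = \int_x^\infty \frac{1}{z}\,\psi(F(z))\,F'(z)\,dz. \qquad (\star)
\]
Integrability of the integrand on $[x_0,\infty)$ for any $x_0>0$ is clear, because $\int_0^\infty \psi(F(z))\,F'(z)\,dz = \int_0^1 \psi(u)\,du \le 1$ by another change of variables and $1/z$ is bounded by $1/x_0$.

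My second step would be to extract the regularity of $F'$ directly from $(\star)$. The right-hand side of $(\star)$ is a tail integral of a locally integrable function, hence it is continuous in $x$ and therefore admits a continuous representative; multiplying by $x$ shows $F'$ is continuous, so $F\in C^1([0,\infty),[0,1])$. Moreover, as a tail integral of an $L^1_{\mathrm{loc}}(0,\infty)$ function, the left-hand side of $(\star)$ is absolutely continuous on every compact subinterval of $(0,\infty)$, and since $F'(x) = x\cdot F'(x)/x$ is the product of this absolutely continuous function with the Lipschitz function $x$, we conclude that $F'$ is itself locally absolutely continuous on $(0,\infty)$.

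The third and final step is to differentiate $(\star)$. By the Lebesgue differentiation theorem applied to the tail integral, at almost every $x>0$
\[
\frac{d}{dx}\!\left(\frac{F'(x)}{x}\right) = -\frac{\psi(F(x))\,F'(x)}{x}.
\]
Expanding the left-hand side as $(xF''(x)-F'(x))/x^2$ and multiplying through by $x^2$ yields $xF''(x)-F'(x) = -xF'(x)\psi(F(x))$, which is exactly \eqref{eq:F_ode}.

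The only nontrivial point is the identification $d\Psi(F(z)) = \psi(F(z))F'(z)\,dz$ in step one. Composition of two absolutely continuous functions need not be absolutely continuous in general, so I would prefer the bare-hands argument above via the fundamental theorem of calculus and the monotone change-of-variables formula, rather than invoking a chain rule for $\Psi\circ F$. Once that step is in place, the rest of the argument is a straightforward differentiation under the integral sign.
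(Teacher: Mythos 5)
Your proof is correct and takes essentially the same route as the paper's: start from the identity $F'(x) = x\int_x^\infty z^{-1}\,d\Psi(F(z))$ supplied by Lemma~\ref{lem:unique_F*}, identify $d\Psi(F(z))$ with $\psi(F(z))F'(z)\,dz$, deduce $F\in C^1$ together with local absolute continuity of $F'$, and then differentiate. Your careful derivation of the measure identity via the fundamental theorem of calculus for $\Psi$ followed by the monotone change-of-variables formula, rather than invoking a chain rule for $\Psi\circ F$, justifies a step the paper asserts tersely; otherwise the two arguments coincide.
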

\begin{proof}
%{\color{red} Some parts here need to be erased, because they have been moved to Section 3.}
  By Lemma~\ref{lem:unique_F*}, and the assumption on $\Psi$, $F\in C^1([0,\infty),[0,1])$  and satisfies
  \begin{equation}
  \label{eq:night}
    F'(x) = x\int_x^\infty \frac 1 z\,d\Psi(F(z))
  \end{equation}
  for every $x\ge0.$ Since $F$ is continuously differentiable and monotone, the function $\Psi\circ F$ is then absolutely continuous as well and $(\Psi \circ F)'(z) = \psi(F(z))F'(z)$ for almost every $z\ge 0$ (this is easy to see but is also contained in \cite[Exercise~3.51]{Leoni}). It follows that the function $x\mapsto F'(x)/x$ is absolutely continuous and hence $F'$ is absolutely continuous as well.  Dividing both sides of \eqref{eq:night} by $x$ and differentiating, we get that~\eqref{eq:F_ode} holds for almost every $x\ge0.$
  
  If $\psi$ is continuous, then $\Psi\circ F$ is $C^1$ with $(\Psi \circ F)'(z) = \psi(F(z))F'(z)$ for every $z\ge0$. As above, it then easily follows from \eqref{eq:night} that $F\in C^2$ and that \eqref{eq:F_ode} holds for every $x\ge0$.
\end{proof}
In what follows, we study the right tail of the distribution function $F=F^\Psi$ from Lemma~\ref{lem:unique_F*} (it is easily seen that the assumptions in the following statements imply (C) and (D), such that the assumptions of Lemma~\ref{lem:unique_F*} are verified by the virtue of Theorem~\ref{thm:main}). We first study its right tail.
\begin{proposition}
\label{prop:F_tail_max}
Assume that $\Psi$ is absolutely continuous with derivative $\psi$ satisfying $\lim_{u\to 1} \psi(u) = \psi(1) > 0$.
\begin{enumerate}
	\item For every $a < \psi(1)$, we have $F'(x) \le e^{-ax}$ for large $x$.
	\item If furthermore there exists $\beta > 1/\psi(1)$, such that $|\psi(1)-\psi(1-u)| \le |\log u|^{-\beta}$ for small enough $u$, then as $x\to\infty$,
\[
F'(x) \sim Cx\exp(-\psi(1)x),
\]
for some $C>0$.
\end{enumerate}
\end{proposition}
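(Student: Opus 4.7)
The plan is to integrate the ODE from Lemma~\ref{lem:ode} to obtain a manageable representation of $F'$ and then analyze its asymptotics. Rewriting the ODE $xF''(x)-F'(x)+xF'(x)\psi(F(x))=0$ as $\tfrac{d}{dx}\log(F'(x)/x) = -\psi(F(x))$ and integrating from some $x_0>0$ where $F'(x_0)>0$ (which exists because $\int_0^\infty F'(y)/y\,dy = 1$ by Lemma~\ref{lem:unique_F*}), one obtains the master representation
\[
F'(x) = \frac{F'(x_0)\,x}{x_0}\exp\left(-\int_{x_0}^x \psi(F(y))\,dy\right).
\]

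For part (1), I would combine $F(\infty)=1$ (Lemma~\ref{lem:unique_F*}) with the hypothesis $\psi(u)\to\psi(1)$ as $u\to 1$: for any $\epsilon>0$ there exists $x_\epsilon$ such that $\psi(F(y))\ge \psi(1)-\epsilon$ for all $y\ge x_\epsilon$. Substituting into the master representation immediately yields $F'(x)\le C_\epsilon\, x\, e^{-(\psi(1)-\epsilon)x}$ for $x$ large. Given $a<\psi(1)$, choosing $\epsilon$ small enough that $\psi(1)-\epsilon>a$ and absorbing the polynomial prefactor into the exponential gives $F'(x)\le e^{-ax}$ for $x$ large.

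For part (2), the master formula shows that $F'(x)\sim Cxe^{-\psi(1)x}$ is equivalent to the convergence of $\int_{x_0}^\infty(\psi(1)-\psi(F(y)))\,dy$ to a finite limit. Part (1) gives the a priori estimate $1-F(y)=\int_y^\infty F'(z)\,dz\le C_a e^{-ay}$ for every $a<\psi(1)$ and $y$ large, and hence $|\log(1-F(y))|\ge ay-O(1)$. The hypothesis on $\psi$ then yields the pointwise bound
\[
|\psi(1)-\psi(F(y))|\le |\log(1-F(y))|^{-\beta}.
\]
To turn this into an integrable bound, I would change variables via $u=1-F(y)$, exploiting the refined asymptotic $F'(y)\sim\psi(1)(1-F(y))$ as $y\to\infty$ (which comes from a second application of the master formula using part (1)); the tail $y$-integral then transforms into $\tfrac{1}{\psi(1)}\int_0^{u_0}|\log u|^{-\beta}\,du/u$ up to an $o(1)$ multiplicative factor, whose convergence follows from the hypothesized lower bound on $\beta$ after the substitution $v=-\log u$.

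The step I expect to be the main obstacle is the bootstrap that upgrades the a priori bound $F'(y)\le C_\epsilon y e^{-(\psi(1)-\epsilon)y}$ from part (1) into the sharp relation $F'(y)/(1-F(y))\to\psi(1)$, together with enough quantitative control on the $o(1)$ errors to justify the change of variables uniformly and to pin down the constant $C$ in the final asymptotic. A secondary technical point is the treatment of the degeneracy as $u\to 0$ in the transformed integral, where the tracking of lower order terms in $y=(-\log u)/\psi(1)+\text{l.o.t.}$ is needed to ensure that the integrability threshold comes out of the hypothesis on $\beta$ and not a more restrictive condition.
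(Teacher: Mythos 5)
Your derivation of the master representation
\[
F'(x) \;=\; F'(x_0)\,\frac{x}{x_0}\exp\Bigl(-\int_{x_0}^x \psi(F(y))\,dy\Bigr)
\]
and your proof of part (1) are exactly the paper's argument (the paper takes $x_0=1$). For part (2) you also identify the correct reduction, namely proving convergence of $\int_1^\infty\bigl(\psi(1)-\psi(F(y))\bigr)\,dy$, but you then detour through a change of variables that the paper never needs. The paper simply notes that part (1) gives $1-F(y)\le e^{-ay}$ for large $y$ and any $a<\psi(1)$, hence $|\log(1-F(y))|\ge ay$, and the hypothesis produces the \emph{pointwise} bound $|\psi(1)-\psi(F(y))|\le (ay)^{-\beta}$. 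This is already a power of $y$, directly integrable at infinity; substituting back into the master formula finishes the proof. No substitution $u=1-F(y)$ is required, and no bootstrap to the refined asymptotic $F'(y)\sim\psi(1)\,(1-F(y))$ is required. Incidentally, that bootstrap is not circular --- it follows by L'H\^opital together with $-F''/F'=\psi(F(x))-1/x\to\psi(1)$ --- but it is superfluous, and the ``uniform $o(1)$'' bookkeeping you flag as the main obstacle is self-inflicted.

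Two smaller remarks. First, after your second substitution $v=-\log u$ the integral becomes $\int v^{-\beta}\,dv$, whose convergence requires $\beta>1$; the direct pointwise bound $(ay)^{-\beta}$ has the same requirement. The stated hypothesis $\beta>1/\psi(1)$ only gives $\beta>1$ when $\psi(1)\le 1$, so your claim that ``convergence follows from the hypothesized lower bound on $\beta$'' is not literally true when $\psi(1)>1$; this mirrors what the paper does but you should not assert that $\beta>1/\psi(1)$ alone closes the gap. Second, your remark about the existence of $x_0$ with $F'(x_0)>0$ is fine but unnecessary: the representation $F'(x)=x\int_x^\infty z^{-1}\,d\Psi(F(z))$ from Lemma~\ref{lem:unique_F*} together with $\psi>0$ near $1$ gives $F'>0$ everywhere, so $x_0=1$ works.
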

\begin{corollary}
In the max-$k$ process (i.e.\ $\Psi(u)=u^k$), there exists $C>0$, such that $F'(x) \sim Cx\exp(-kx)$ as $x\to\infty$.
\end{corollary}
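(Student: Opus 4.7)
The plan is simply to verify the hypotheses of Proposition~\ref{prop:F_tail_max}(2) for $\Psi(u)=u^k$ and then invoke it. First, note that $\Psi$ is absolutely continuous with derivative $\psi(u)=ku^{k-1}$, which is continuous on $[0,1]$ and satisfies $\psi(1)=k>0$; this already puts us in the setting of the proposition, with $\psi(1)=k$.

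Next I would check the near-$1$ regularity condition: for small $u>0$,
\[
|\psi(1)-\psi(1-u)| = k\bigl|1-(1-u)^{k-1}\bigr| = O(u)\quad\text{as }u\downarrow 0,
\]
by a Taylor expansion (the case $k=1$ being trivial, where the left-hand side is identically $0$). Since $O(u)$ decays to $0$ polynomially in $u$, while $|\log u|^{-\beta}$ decays only logarithmically, for every $\beta>0$ we have $|\psi(1)-\psi(1-u)|\le |\log u|^{-\beta}$ for all sufficiently small $u$. In particular, this holds for some $\beta>1/k = 1/\psi(1)$, so the hypothesis of part (2) of the proposition is satisfied.

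Finally, applying Proposition~\ref{prop:F_tail_max}(2) with $\psi(1)=k$ yields the existence of a constant $C>0$ such that $F'(x)\sim Cx\exp(-kx)$ as $x\to\infty$, which is exactly the claim. There is essentially no obstacle here: the entire content is in Proposition~\ref{prop:F_tail_max}, and the only work is the elementary verification that $\psi(u)=ku^{k-1}$ satisfies the very mild logarithmic-modulus-of-continuity condition near $u=1$, which it does trivially because it is in fact Lipschitz there.
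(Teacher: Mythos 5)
Your proposal is correct and is essentially the same argument the paper intends: the corollary is stated immediately after Proposition~\ref{prop:F_tail_max} with no separate proof, precisely because it follows by the routine verification you give (namely $\psi(u)=ku^{k-1}$ is continuous at $1$ with $\psi(1)=k>0$, and $|\psi(1)-\psi(1-u)|=O(u)$ dominates any $|\log u|^{-\beta}$ near $0$).
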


\begin{proof}[Proof of Proposition~\ref{prop:F_tail_max}]
  Lemma~\ref{lem:ode} implies that
\[
F''(x) = \left(\frac 1 x -\psi(F(x))\right) F'(x).
\]
Rearranging, this implies that
\[
  \frac{d}{dx}\left(F'(x)\exp\left(-\int_1^x \frac 1 y - \psi(F(y))\,dy\right)\right) = 0.
\]
Integrating this equation gives
\begin{align}
  \nonumber
F'(x) &= F'(1)\exp\left(\int_1^x \frac 1 y - \psi(F(y))\,dy\right) \\
  \label{eq:F_prime}
&= F'(1)x\exp\left(-\int_1^x \psi(F(y))\,dy\right).
\end{align}
Now assume that $\lim_{u\to 1} \psi(u) = \psi(1) > 0$. Since $F(x)\to 1$ as $x\to\infty$, the first statement follows directly from \eqref{eq:F_prime}. Now assume that there exists $\beta > 1/\psi(1)$, such that $|\psi(1)-\psi(1-u)| \le |\log u|^{-\beta}$ for small enough $u$. Let $a\in(\beta^{-1},\psi(1)^{-1})$. Then by the first statement, we have $1-F(x) \le e^{-ax}$ for large $x$, which implies that
\[
|\psi(1)-\psi(F(x))| \le \frac 1 {x^{a\beta}},\quad\text{for large $x$.}
\]
In particular, the integral $\int_0^x (\psi(1)-\psi(F(y)))\,dy$ converges to a limit as $x\to\infty$. Together with \eqref{eq:F_prime}, this now implies that
\[
F'(x) = F'(1)x \exp\left(-\psi(1) x + \int_0^x (\psi(1) - \psi(F(y)))\,dy\right) \sim C'x\exp(-\psi(1) x),
\]
as $x\to\infty$, for some $C'>0$. This finishes the proof of the proposition.
\end{proof}

In contrast to the case treated in Proposition~\ref{prop:F_tail_max}, the case $\lim_{u\to 1} \psi(u) = \psi(1) = 0$ is more delicate. Here we are only able to give a satisfying answer for the min-$k$ process, i.e.\ $\psi(u) = k(1-u)^{k-1}$ for $k > 1$ (not necessarily integer). In this case, we are able to transform equation \eqref{eq:F_ode} into an \emph{autonomous} differential equation by setting $F(x) = 1-G(\log x^{1/(k-1)})/x^{1/(k-1)}$. This equation can then be studied by standard phase plane analysis, yielding the following result:
\begin{proposition}
\label{prop:F_tail_min}
Assume $\Psi(u) = 1-(1-u)^k$  for some (real) $k > 1$. Then, as $x\to\infty$,
\[
1-F(x) \sim \frac{c_k}{x^{1/(k-1)}}\text{ and }F'(x) \sim \frac{c_k}{(k-1)x^{1+1/(k-1)}},
\]
where $c_k = ((2k-1)/k(k-1))^{1/(k-1)}$.
\end{proposition}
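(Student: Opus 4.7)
The plan is to follow the hint in the statement: substitute $F(x) = 1 - G(t)/x^{1/(k-1)}$ with $t = \frac{1}{k-1}\log x$, reducing the problem to the asymptotic analysis of an autonomous planar dynamical system. Write $\alpha = 1/(k-1)$. Using $\psi(u) = k(1-u)^{k-1}$ so that $(1-F(x))^{k-1} = G(t)^{k-1}/x$, together with the chain-rule identities $F'(x) = \alpha x^{-\alpha-1}(G-G')$ and $F''(x) = x^{-\alpha-2}[\alpha^2(G'-G'') - \alpha(\alpha+1)(G-G')]$, equation~\eqref{eq:F_ode} becomes, after clearing the common factor $\alpha x^{-\alpha-1}$ and multiplying by $k-1$, the autonomous second-order equation
\begin{equation*}
G''(t) = G'(t) + \bigl(G(t)-G'(t)\bigr)\bigl(k(k-1)G(t)^{k-1} - (2k-1)\bigr).
\end{equation*}

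Rewrite this as the planar system $G' = H$, $H' = H + (G-H)(k(k-1)G^{k-1} - (2k-1))$ on the half-plane $G\ge 0$. The equilibria are $(0,0)$ and $(c_k,0)$, where $c_k$ is precisely the positive root of $k(k-1)G^{k-1} = 2k-1$, namely $c_k = ((2k-1)/(k(k-1)))^{1/(k-1)}$. Linearizing at $(c_k,0)$ gives the Jacobian
\begin{equation*}
\begin{pmatrix} 0 & 1 \\ (k-1)(2k-1) & 1 \end{pmatrix},
\end{equation*}
whose determinant $-(k-1)(2k-1)$ is negative, so $(c_k,0)$ is a hyperbolic saddle with eigenvalues $\lambda_\pm = \tfrac{1}{2}\bigl(1 \pm \sqrt{1 + 4(k-1)(2k-1)}\,\bigr)$ of opposite sign and a one-dimensional stable manifold $W^s$.

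The main obstacle is pinning down the trajectory: a generic orbit near a saddle diverges, so I must exploit a priori information about $F^\Psi$ to show that $(G,H)$ actually lies on $W^s$. The usable constraints are: (i) $F\in(0,1)$ gives $0 < G(t) < e^t$; (ii) $F'>0$ (via~\eqref{eq:F_ode_integral}) gives $H < G$ along the orbit; and (iii) $1-F(x)\to 1$ as $x\to 0^+$ yields the backward-$t$ asymptotics $G(t)\sim e^t$, $H(t)\sim e^t$ as $t\to-\infty$. My strategy is to build a compact forward-invariant trapping region $\mathcal{R}\subset\{0\le H\le G\le M\}$ for $M$ large, checking the sign of the vector field on the nullclines $H=0$ and $H(1-V(G)) = -GV(G)$, where $V(G) = k(k-1)G^{k-1} - (2k-1)$. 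I then show that the orbit enters $\mathcal{R}$ in finite time, so that its $\omega$-limit set is a nonempty compact invariant subset of $\mathcal R$. Poincar\'e--Bendixson, together with a Dulac function argument (taking $B(G,H) = G^{-p}$ for a suitable $p$ to rule out periodic orbits), then forces the $\omega$-limit to be an equilibrium. Since the monotonicity of $F$ and the asymptotics at $-\infty$ keep $G$ bounded away from $0$, the origin is excluded and the only remaining possibility is $(G(t),H(t))\to(c_k,0)$.

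Once this convergence is established, the two claimed asymptotics follow immediately by back-substitution: $1 - F(x) = G(t)/x^\alpha \sim c_k\, x^{-1/(k-1)}$, and $F'(x) = \alpha x^{-\alpha-1}(G - H) \sim (c_k/(k-1))\, x^{-1-1/(k-1)}$ since $H\to 0$ while $G\to c_k$. Everything outside the phase-plane step is routine; the real technical content is the trapping-region plus Dulac argument, which is what genuinely singles out $W^s$ among the locally one-parameter family of orbits passing near $(c_k,0)$.
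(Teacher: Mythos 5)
Your substitution, the resulting autonomous ODE $G''=G'+(G-G')(k(k-1)G^{k-1}-(2k-1))$, the identification of the equilibria $(0,0)$ and $(c_k,0)$, and the computation of the Jacobian at $(c_k,0)$ all agree with the paper and are correct. The three a priori facts (i)--(iii) you list are also correctly derived. The gap is in the phase-plane step, which you yourself flag as the ``real technical content'' but leave as a sketch, and as sketched it does not go through.

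The naive region $\{0\le H\le G\le M\}$ is not forward-invariant: on the segment $\{H=0,\,G>c_k\}$ the vector field is $(0,\,G(2k-1-k(k-1)G^{k-1}))$ whose second component is strictly negative, so orbits exit downward. More fundamentally, no compact forward-invariant region can contain $(c_k,0)$, because its unstable eigenvector $(1,\lambda_+)$ points into $\{H>0,\,G>c_k\}$ and the corresponding branch of $W^u$ is not contained in any compact set --- note that $\{H=G\}$ is an invariant line along which $G(t)=G(0)e^t\to\infty$, so nearby orbits above the stable manifold escape to infinity. Patching the boundary near the saddle so that only $W^s$ enters is exactly the delicate step, and it is not addressed. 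Your Dulac candidate $B=G^{-p}$ gives $\nabla\cdot(BV)=G^{-p}\bigl(-pH/G+2k-k(k-1)G^{k-1}\bigr)$, which is positive near $(G,H)$ small and negative for $G$ large or $H$ large; it does not have a definite sign on any region you would need, so periodic orbits are not ruled out as stated. Finally, you note $0<G<e^t$, but the contradiction used to bound $G$ requires the strictly stronger $G(t)=o(e^t)$ (coming from $\overline F(x)\to 0$): the paper's argument is that if $G\to\infty$ then, writing $H:=G-G'$, equation~\eqref{eq:G} forces $H'<-H$ for large $t$ whenever $H>0$, hence $G'>G-1$ eventually, and Gronwall yields $G\ge Ce^t$ --- contradicting $o(e^t)$, not merely $<e^t$. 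The paper then shows $G'$ must vanish at some $t_0\le\infty$, and rules out $G(t_0)\ne c_k$ by two direct sign arguments, with no need for Poincar\'e--Bendixson or a Dulac function. To repair your proof you would essentially have to import this Gronwall step, at which point the remaining case analysis is short enough that the trapping-region machinery is superfluous.
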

\begin{proof}
 Since $\Psi$ is continuously differentiable, Lemma~\ref{lem:ode} implies that $F\in C^2$ and \eqref{eq:F_ode} holds for every $x\ge0$.  Set $\overline F(x) = 1-F(x)$, so that this differential equation becomes
\[
x\overline F''(x) -\overline F'(x) + xk\overline F(x)^{k-1}\overline F'(x) = 0.
\]
The substitution $\overline F(x) = G(\log x^{1/(k-1)})/x^{1/(k-1)}$ or $G(t) = e^t\overline F(e^{(k-1)t})$ then yields the autonomous differential equation
\begin{equation}
G'' - G' - (2k-1 - k(k-1)G^{k-1})(G'-G) = 0.
\label{eq:G}
\end{equation}
We can now study \eqref{eq:G} by usual phase plane analysis (see e.g.\ \cite{Arnold1992}). For this, we consider the two-dimensional ODE
\begin{equation}
\label{eq:2d_ode}
\begin{pmatrix}
G\\
G'
\end{pmatrix}'
=
\begin{pmatrix}
G'\\
G''
\end{pmatrix}
=
\begin{pmatrix}
G'\\
G' + (2k-1 - k(k-1)G^{k-1})(G'-G)
\end{pmatrix}.
\end{equation}
We recall that a critical point of the ODE \eqref{eq:2d_ode} is a point $(a,b)\in \R^2$ such that the right-hand side of $\eqref{eq:2d_ode}$, with $G=a$ and $G'=b$, vanishes. Obviously, $(a,b)$ is a critical point of \eqref{eq:2d_ode} if and only if $b=0$ and 
\[
(2k-1-k(k-1)a^{k-1})a = 0,
\]
i.e.\ if and only if $a\in\{0,c_k\}$. Since the right-hand side of \eqref{eq:2d_ode} is locally Lipschitz-continuous in $(G,G')$, general theory then tells us that non-critical points can only be reached in finite time and critical points can only be reached in infinite time unless $G$ is constant (which it isn't, since $G(-\infty) = 0$ and $\overline F$ is not constant equal to 0). We will use these facts below without further mention.

We now study the possible orbits of solutions to \eqref{eq:2d_ode} and identify the one that corresponds to our particular solution. We first note that since $\overline F(0) = 1$, we have $G(-\infty) = 0$. Furthermore, $G\ge 0$. We will now rule out several orbits through a chain of arguments.
\begin{enumerate}
  \item \emph{$G(t)$ cannot go to $\infty$ as $t\to\infty$.}\\
  For, suppose that $\lim_{t\to\infty} G(t) = \infty$. Define $H(t) = G(t)-G'(t)$. Equation \eqref{eq:G} then implies that for large $t$, $\tfrac 1 2 \tfrac d {dt} H(t)^2 = H(t)H'(t) < -H(t)^2.$  By Gronwall's inequality, this implies that $|H(t)| \to 0.$  As $G(t) \to \infty$ this implies that $G'(t) > G(t) - 1$ for large $t$. Gronwall's inequality then shows that $G(t) > Ce^t$ for some $C>0$ and for large $t$. But this contradicts the fact that $G(t) = e^t\overline F(e^{(k-1)t}) = o(e^t)$, which follows from the fact that $\overline F(x)\to 0$ as $x\to\infty$. This contradiction shows that $G(t)$ cannot go to $\infty$ as $t\to\infty$.
  \item \emph{$G'(t) \ne 0$ for all $t\in\R$.}\\
  For, suppose there exists $t_0\in\R$, such that $G'(t_0) = 0$. Then $G(t_0) \ne \{0,c_k\}$ because critical points can only be reached in infinite time. We will distinguish two cases.
  
  {Case $G(t_0)\in(0,c_k)$:} By \eqref{eq:G}, we have $G''(t_0) < 0$, so that there exists $\ep>0$ and $t_1>t_0$, such that $(G(t_1),G'(t_1)) \in (0,c_k)\times (-\infty,-\ep)$. For $(G,G')$ in this domain, we have $G'' \le -\ep$ by \eqref{eq:G}, whence the orbit cannot exit this domain at the $G' = -\ep$ border, neither at the $G=c_k$ border since $G$ is decreasing in this domain. Furthermore, the orbit cannot stay forever inside the domain, because then $G'(t) = G'(t_1) + \int_{t_1}^t G''(s)\,ds\to -\infty$ and therefore $G(t)\to-\infty$ as $t\to\infty$. Hence, the orbit has to exit the domain at the $G=0$ border. But since then $G' < -\ep$, we would have $G(t) < 0$ for some $t\in\R$, which is in contraction with $G\ge0$. 
  
  {Case $G(t_0) > c_k$:} In this case, \eqref{eq:G} gives $G''(t_0) > 0$, whence $t_0$ is a (strict) local minimum of $G$. Since $G(-\infty) = 0$, by continuity there must then exist a $t_1 < t_0$, such that $G(t_1) = \max\{G(t):t\le t_0\} > G(t_0)$. At $t_1$, we then have $G'(t_1) = 0$ and $G''(t_1) \le 0$. But by \eqref{eq:G}, we again have $G''(t_1) > 0$, which is a contradiction.
  \item \emph{$G'(t) > 0$ for all $t\in\R$.}\\
  Since $G'(t) \ne 0$ for all $t\in\R$, we either have $G'>0$ or $G'<0$. Since $G(-\infty) = 0$, the latter would imply that $G(t) < 0$ for some (indeed, all) $t\in\R$, which is in contradiction with $G\ge 0$. Hence, $G'(t) > 0$ for all $t\in\R$.
  \item \emph{$(G(\infty),G'(\infty)) = (c_k,0)$.}\\
  The previous points imply that $G$ is non-decreasing, non-zero and bounded. In particular, $(G(t),G'(t))$ converges to a limit $(c,0)$ as $t\to\infty$,  with $c>0$. This limit has to be a critical point. Since the only such critical point is $(c_k,0)$, we have $c=c_k$.
\end{enumerate}
% The preceding arguments imply that $G(\infty) = c_k$ and $G'(\infty) = 0$ (and also that $G'(t) > 0$ for all $t$). This now gives as $x\to\infty$,
The preceding results now give as $x\to\infty$,
\begin{align*}
\overline F(x) &\sim \frac {c_k} {x^{1/(k-1)}},\quad\text{ and }\\
\overline F'(x) &= \frac{-G(\log x^{1/(k-1)})+G'(\log x^{1/(k-1)})}{(k-1)x^{1+1/(k-1)}}\sim -\frac {c_k} {(k-1)x^{1+1/(k-1)}}.
\end{align*}
This finishes the proof of the lemma.
\end{proof}
\begin{remark}
The critical point $(c_k,0)$ of the ODE \eqref{eq:2d_ode} is in fact a saddle point. This can be used to yield another proof of the uniqueness of the solution to \eqref{eq:F_ode} with $\psi(u) = k(1-u)^{k-1}$ and the boundary conditions $F(0)=0$, $F(\infty) = 1$, $F\ge 0$.
\end{remark}

Lastly, we study the asymptotics when the measure $d\Psi$ converges weakly to $\delta_1$ (which corresponds to the \emph{Kakutani process} as mentioned in the introduction). Formally, the function $F$ satisfies in this case the equation
\[
 xF'' - F' + xF'(x-)\delta_1(F(x)) = 0,
\]
which implies that $F$ is of the form $F(x) = Cx^2 \wedge 1$ for some $C>0$. Since $F'(x)/x$ is the density of the interval distribution, we have with  $x_0=1/\sqrt C$,
\[
 1 = \int_0^\infty F'(x)/x\,dx = 2Cx_0 = 2\sqrt C \Rightarrow C = 1/4.
\]
The following proposition makes this argument rigorous:

\begin{proposition}
\label{prop:limit_kakutani}
Let $(\Psi_n)_{n \geq 0}$ be a sequence of distribution functions of measures on $(0,1]$ with $\Psi_n(x) \to 0$ for all $x \in (0,1).$  Assume that for all these $\Psi_n,$ there are distribution functions $F_n$ satisfying
\begin{align}
\label{eq:F_integral_eq}
F_n(x) =
\int_0^x y
\int_y^\infty
\frac{1}{z}
d\Psi_n(F_n(z))\,dy
\end{align}
for all $x \geq 0.$  Then $F_n(x)\to x^2/4\wedge 1$ pointwise as $n\to\infty$.
\end{proposition}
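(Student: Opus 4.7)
The plan is to extract a subsequential limit $G$ of $(F_n)$ via Helly's selection, to identify its algebraic form using the integral equation, and then to pin down the remaining free parameter using the identity $\candy[F_n] = \int_0^\infty F_n(x)/x^2\,dx = 1$ which holds for every $n$ by the same integration by parts as in the last part of Lemma~\ref{lem:unique_F*}.

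First I will pass to a subsequence along which $F_n \to G$ and $\Phi_n := \Psi_n\circ F_n \to \rho$, both pointwise at continuity points, for some nondecreasing $G, \rho : [0,\infty] \to [0,1]$. Since $\Psi_n(u)\to 0$ for every $u<1$, at any continuity point $y$ with $G(y)<1$ we have $F_n(y)\to G(y) < 1$, hence $\rho(y) = 0$. Integrating~\eqref{eq:F_integral_eq} by parts in $z$ and swapping orders rewrites it in the convenient form
\[
F_n(x) = -\tfrac12\int_0^x \Phi_n(z)\,dz + \tfrac{x^2}{2}\int_x^\infty \frac{\Phi_n(z)}{z^2}\,dz,
\]
and dominated convergence (with $\Phi_n\le 1$ and the integrable weight $1/z^2$ in the tail) then yields the analogous identity for $G$ and $\rho$.

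Next, set $x_0 := \inf\{y : G(y) \ge 1\}\in(0,\infty]$. Suppose for the moment $x_0 < \infty$. On $(x_0, \infty)$ the limiting equation reduces to $G\equiv 1$, and differentiating it gives $\rho(x) = x\int_x^\infty \rho(z)/z^2\,dz$, which forces $\rho$ constant equal to some $c$ there; evaluating at $x = x_0$ yields $cx_0 = 2$. Substituting back for $x < x_0$ gives $G(x) = x^2/x_0^2$. Hence any subsequential limit must be of the one-parameter form $G(x) = (x/x_0)^2 \wedge 1$, with $x_0 \in [2, \infty]$ (the lower bound $x_0\ge 2$ following from $\int_0^\infty G/x^2 = 2/x_0 \le 1$ by Fatou applied to $\candy[F_n]=1$).

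The main obstacle is to rule out $x_0 > 2$, and in particular $x_0 = \infty$ (i.e.\ $G\equiv 0$). My approach is to pass to the limit in $\candy[F_n] = 1$ itself; the only delicate point is the behavior near the origin, where $F_n/y^2$ has no obvious uniform domination. Integrating the displayed expression above in $y$ and applying Fubini gives the exact identity
\[
\int_0^\epsilon \frac{F_n(y)}{y^2}\,dy = \frac{1}{2\epsilon}\int_0^\epsilon \Phi_n(z)\,dz + \frac{\epsilon}{2}\int_\epsilon^\infty \frac{\Phi_n(z)}{z^2}\,dz,
\]
whose right-hand side is again handled by dominated convergence. Combined with the routine convergence $\int_\epsilon^\infty F_n/y^2\to\int_\epsilon^\infty G/y^2$ (dominated by $1/y^2$), this yields $\lim_n \candy[F_n] = 2/x_0$, forcing $x_0 = 2$. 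Since the resulting limit $x^2/4 \wedge 1$ is continuous and independent of the chosen subsequence, the full sequence $(F_n)$ converges to it pointwise on $[0,\infty)$.
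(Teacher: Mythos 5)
Your proof is correct and takes a route genuinely different from the paper's.  The paper first establishes tightness of $(F_n)$ by a direct estimate — integrating $F_n'/x$ over $(x,\infty)$ yields both a lower bound $\frac12(1-\Psi_n(F_n(2x)))$ and an upper bound $1/x$, and taking $x=4$ forces $F_n(8)$ close to $1$ — and then asserts that $\Psi_n(F_n)\to\one[F_*\ge1]$ almost everywhere before passing to the limit, obtaining $F_*(x)=x^2/(2x_0)$ on $[0,x_0]$ and closing with $F_*(x_0)=1\Rightarrow x_0=2$.  You instead treat $\Phi_n=\Psi_n\circ F_n$ as an additional Helly sequence with limit $\rho$, identify $\rho$ only through the limiting integral identity (zero below $x_0$, constant $c=2/x_0$ above by the ODE argument), and then pin down $x_0=2$ by passing to the limit in $\candy[F_n]=1$ using the exact decomposition of $\int_0^\epsilon F_n/y^2$.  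Two things your route buys: first, it sidesteps the paper's almost-everywhere convergence claim for $\Psi_n(F_n)$, which is not obviously justified at points with $F_*(x)=1$ (since $F_n(x)\to 1$ with an unknown rate while $\Psi_n\to 0$ ever faster near $1$), whereas in your version $\rho$ is whatever it is and gets characterized after the fact; second, the $\candy$ computation both rules out the degenerate limit $G\equiv 0$ and fixes $x_0$ in one stroke, replacing the paper's separate tightness estimate.  The one small thing you assert without comment is $x_0>0$; it follows from $G(0)=0$ and the continuity of $G$ (which the limiting integral identity provides), and you might want to say so.
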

\begin{proof}
It follows immediately from the integral equation satisfied by $F_n$ that it is absolutely continuous and thus satisfies~\eqref{eq:F_ode_integral} for almost every $x.$

We begin by showing that the $\{F_n\}_{n \geq 0}$ are tight. Dividing~\eqref{eq:F_ode_integral} by $x$ and integrating, we get that
\begin{align}
\nonumber
\int_x^\infty \frac{dF_n(y)}{y}
&=\int_x^\infty \int_y^\infty \frac{d\Psi_n(F_n(z))}{z}\,dy \\
\nonumber
&=\int_x^\infty \frac{z-x}{z}{d\Psi_n(F_n(z))} \\
\nonumber
&\geq\frac{1}{2}\int_{2x}^\infty{d\Psi_n(F_n(z))} \\
\label{eq:pascal}
&=\frac{1}{2}(1-\Psi_n(F_n(2x))).
\end{align}
On the other hand, we get that
\begin{align}
\label{eq:maillard}
\int_x^\infty \frac{dF_n(y)}{y} \leq \frac{1}{x}\int_x^\infty {dF_n(y)} \le \frac 1 x.
\end{align}
From the convergence of $\Psi_n \to 0,$ we have that for any $\delta > 0$ and any $\epsilon > 0,$ there is an $n_0$ sufficiently large so that for $n \geq n_0,$ $\Psi_n(u) \leq \epsilon$ for $u \leq 1-\delta.$  Thus, combining~\eqref{eq:pascal} and~\eqref{eq:maillard} we get that
\begin{align*}
1-\frac{2}{x}
&\leq \Psi_n(F_n(2x)) \\
&\leq \epsilon \one[F_n(2x) \leq 1-\delta] + \one[F_n(2x) > 1-\delta] \\
&\leq \epsilon + \one[F_n(2x) > 1-\delta].
\end{align*}
Setting $x=4$ in the above equation and assuming $\epsilon < 1/2$, we have $F_n(8) > 1-\delta$ for all $n\ge n_0$. This implies tightness of the sequence $(F_n)_{n \geq 0}.$

Integrating~\eqref{eq:F_integral_eq} by parts, we have that
\[
F_n(x) = -\int_0^x \Psi_n(F_n(z))\,dz + \int_0^x z \int_z^\infty \frac{1}{y^2} \Psi(F_n(y))\,dydz.
\]
By passing to a convergent subsequence, we may assume that there is a nondegenerate distribution function $F_*$ so that $F_n \to F_*$ at every point of continuity of $F_*.$  We then get that $\Psi_n(F_n)$ converges almost everywhere to $\one[F_*(x) \geq 1].$  By dominated convergence, we can pass to the limit in the previous equation to get
\[
F_*(x) =
-\int_0^x\one[F_*(z) \geq 1]\,dz
+\int_0^x z \int_z^\infty \frac{1}{y^2}\one[F_*(y) \geq 1] \,dydz.
\]
Let $x_0 = \sup\{x: F_*(x) < 1\}.$  Note that if $x_0 = \infty,$ then both integrals are identically $0,$ implying $F_* \equiv 0$ and contradicting the tightness of $F_n.$  For any $x \leq x_0,$ we get that
\begin{align*}
F_*(x) = \int_0^x z \int_z^\infty \frac{1}{y^2}\one[F_*(y) \geq 1] \,dydz = \int_0^x \frac{z}{x_0} dz = \frac{x^2}{2x_0} .
\end{align*}
This forces $x_0 = 2,$ and hence $F_*\equiv x^2/4\wedge 1.$  As this holds for every subsequential limit of $F_n,$ we have completed the proof.
\end{proof}

\bibliographystyle{alpha}

%%\bibliography{po2hc_pa}

\bibliography{IntervalDivision,Achlioptas}

\end{document}